\documentclass[12pt]{article}

\usepackage[a4paper,
textwidth = 16cm,
textheight = 22.5cm,
top = 3cm]{geometry}

\usepackage[colorlinks,citecolor=blue,urlcolor=blue,breaklinks]{hyperref}
\usepackage{xcolor,latexsym,amsmath,amssymb,amsfonts,amsthm,bm,bbm,enumitem,xcolor,soul,graphicx, dsfont,enumitem,threeparttable,caption,subcaption, comment, booktabs}

\graphicspath{{./fig/}}
\usepackage[normalem]{ulem}
\usepackage[ruled, vlined, linesnumbered]{algorithm2e}
\usepackage{multirow}
\usepackage[round]{natbib}

\usepackage[utf8]{inputenc}   
\usepackage[T1]{fontenc}      
\usepackage{lmodern}      

\SetKwInput{KwSet}{Set}

\newtheorem{thm}{Theorem}
\newtheorem{prop}[thm]{Proposition}
\newtheorem{lemma}[thm]{Lemma}
\newtheorem{cor}[thm]{Corollary}

\DeclareMathOperator*{\argmax}{arg\,max}

\DeclareMathOperator*{\supp}{supp}
\DeclareMathOperator{\diag}{diag}
\DeclareMathOperator{\RSS}{RSS}

\newcommand\smallO{
  \mathchoice
    {{\scriptstyle\mathcal{O}}}
    {{\scriptstyle\mathcal{O}}}
    {{\scriptscriptstyle\mathcal{O}}}
    {\scalebox{.5}{$\scriptscriptstyle\mathcal{O}$}}
  }

\title{Two-sample testing of high-dimensional linear regression coefficients via complementary sketching}
\author{Fengnan Gao\footnote{Fudan University and Shanghai Center for Mathematical Sciences. Email: fngao@fudan.edu.cn} \; and\; Tengyao Wang\footnote{London School of Economics and Political Science; University College London. Email: t.wang59@lse.ac.uk}}
\date{(\today)}

\begin{document}
\maketitle

\begin{abstract}

We introduce a new method for two-sample testing of high-dimensional linear regression coefficients without assuming that those coefficients are individually estimable. The procedure works by first projecting the matrices of covariates and response vectors along directions that are complementary in sign in a subset of the coordinates, a process which we call `complementary sketching'. The resulting projected covariates and responses are aggregated to form two test statistics, which are shown to have essentially optimal asymptotic power under a Gaussian design when the difference between the two regression coefficients is sparse and dense respectively. Simulations confirm that our methods perform well in a broad class of settings and an application to a large single-cell RNA sequencing dataset demonstrates its utility in the real world.

\end{abstract}


\section{Introduction}
\label{sec:intro}
Two-sample testing problems are commonplace in statistical applications across different scientific fields, wherever researchers want to compare observations from different samples.
In its most basic form, a two-sample Gaussian mean testing problem is formulated as follows: upon observing two samples $X_1,\dots X_{n_1} \stackrel{\mathrm{iid}}{\sim} N(\mu_1, \sigma^2) $ and $Y_1, \dots, Y_{n_2} \stackrel{\mathrm{iid}}{\sim} N(\mu_2, \sigma^2)$, we wish to test
\begin{equation}
    \label{Eqn:classical-two-sample}
    H_0: \mu_1 = \mu_2 \quad \text{versus} \quad H_1: \mu_1 \neq \mu_2.
\end{equation}
This leads to the introduction of the famous two-sample Student's $t$-test.
In a slightly more involved form in the parametric setting, we observe $X_1,\dots, X_{n_1} \stackrel{\mathrm{iid}}{\sim} F_{\theta_1, \gamma_1}$ and $Y_1, \dots, Y_{n_2} \stackrel{\mathrm{iid}}{\sim} F_{\theta_2, \gamma_2}$ and would like to test
\(
    H_0: \theta_1 = \theta_2 \text{ versus } H_1:  \theta_1 \neq \theta_2,
\)
where $\gamma_1$ and $\gamma_2$ are nuisance parameters.

Linear regression models have been one of the staples of statistics.
A two-sample testing problem in linear regression arises in the following classical setting:  fix $p \ll \min\{n_1, n_2\}$,
we observe an $n_1$-dimensional response vector $Y_1$ with an associated design matrix $X_1 \in \mathbb{R}^{n_1 \times p}$ in the first sample, and an $n_2$-dimensional response $Y_2$ with design matrix $X_2 \in \mathbb{R}^{n_2\times p}$ in the second sample. We assume in both samples the responses are generated from standard linear models
\begin{equation}
\begin{cases}
Y_1 =  X_1 \beta_1 + \epsilon_1, \\
Y_2 = X_2  \beta_2 + \epsilon_2,
\end{cases}
\label{Eq:Model}
\end{equation}
for some unknown regression coefficients $ \beta_1, \beta_2 \in \mathbb{R}^p$ and independent homoscedastic noise vectors $\epsilon_1\mid (X_1,X_2) \sim N_{n_1}(0, \sigma^2 I_{n_1})$ and $\epsilon_2 \mid (X_1,X_2)\sim N_{n_2}(0, \sigma^2 I_{n_2})$.
The purpose is to test
\(
    H_0: \beta_1 = \beta_2 \text{ versus }  H_1: \beta_1 \neq \beta_2.
\)
Suppose that $\hat\beta$ is the least square estimate of $\beta = \beta_1 = \beta_2$ under the null hypothesis and $\hat\beta_1$, $\hat\beta_2$ are the least square estimates of $\beta_1$ and $\beta_2$ respectively under the alternative hypothesis.
Define the residual sum of squares as
\begin{equation}
    \begin{aligned}
        \RSS_1 & = \| Y_1 - X_1 \hat\beta_1\|_2^2 + \| Y_2 - X_2 \hat\beta_2\|_2^2,\\
        \RSS_0 & = \| Y_1 - X_1\hat\beta\|_2^2 + \| Y_2 - X_2 \hat\beta\|_2^2.
    \end{aligned}
    \label{Eqn:rss-def}
\end{equation}
The classical generalized likelihood ratio test \citep{chow1960tests} compares the $F$-statistic
\begin{equation}
\label{Eq:LRT}
    F = \frac{(\RSS_0 - \RSS_1)/p }{ \RSS_1/(n_1+n_2-2p)} \sim F_{p,\, n_1+n_2-2p}
\end{equation}
against upper quantiles of the $F_{p, \, n_1+n_2-2p}$ distribution. It is well-known that in the classical asymptotic regime where $p$ is fixed and $n_1,n_2\to\infty$, the above generalized likelihood ratio test is asymptotically optimal.

High-dimensional datasets are ubiquitous in the contemporary era of Big Data.
As dimensions of modern data $p$ in genetics, signal processing, econometrics and other fields are often comparable to sample sizes $n$, the most significant challenge in high-dimensional data is that the fixed-$p$-large-$n$ setup prevalent in classical statistical inference is no longer valid.
Yet the philosophy remains true that statistical inference is only possible when the sample size relative to the \textit{true} parameter size is sufficiently large.
Most advances in high-dimensional statistical inference so far have been made under some `sparsity' conditions, i.e., all but a small (often vanishing) fraction of the $p$-dimensional model parameters are zero.
The assumption in effect reduces the parameter size to an estimable level, and it makes sense in many applications because often only few covariates are \textit{really} responsible for the response, though identification of these few covariates is still a nontrivial task.
In the high-dimensional regression setting $Y = X\beta + \epsilon$ where $Y \in \mathbb{R}^n$, $X \in \mathbb{R}^{n\times p}$, $\beta \in \mathbb{R}^p$ with $p, n \rightarrow \infty$ simultaneously, a common assumption to make is $k \log p /n \rightarrow 0$ with $k = \|\beta\|_0:=\sum_{j=1}^p \mathbbm{1}_{\{\beta_j\neq 0\}}$.  Therefore, $k$ is the true parameter size, which vanishes relative to the sample size $n$, and $\log p $ is understood as the penalty to pay for not knowing where the $k$ true parameters are.

Aiming to take a step in studying the fundamental aspect of two-sample hypothesis testing in  high dimensions, this paper is primarily concerned with the following testing problem: we need to decide whether the responses in the two samples have different linear dependencies on the covariates.
More specifically, under the same regression setting as in \eqref{Eq:Model} with $\min\{p,n\} \rightarrow \infty$, we wish to test the global null hypothesis
\begin{equation}
\label{Eq:Null}
H_0:  \beta_1 = \beta_2
\end{equation}
against the composite alternative
\begin{equation}
\label{Eq:Alternative}
H_1:  \| \beta_1 - \beta_2\|_2 \geq 2\rho, \; \| \beta_1 - \beta_2 \|_0\leq k.
\end{equation}
In other words, we assume that under the alternative hypothesis, the difference between the two regression coefficients is a $k$-sparse vector with $\ell_2$ norm at least $2\rho$ (the additional factor of $2$ here exists to simplify relevant statements under the reparametrisation we will introduce later in Section~\ref{sec:method}).
Throughout this paper, we do not assume the sparsity of $\beta_1$ or $\beta_2$ under the alternative.

Classical $F$-tests no longer work well on the above testing problem, for the simple reason that it is not possible to get good estimates of $\beta$'s through naive least square estimators, which are necessary in establishing $\RSS$ in \eqref{Eqn:rss-def} to measure the model's goodness of fit.
A standard way out is to impose certain kinds of sparsity on both $\beta_1$ and $\beta_2$ to ensure that both quantities are estimable.
To our best knowledge, this is the out-of-shelf approach taken by most literature, see, for instance, \cite{stadler2012two,xia2015testing}.
Nevertheless, it is both more interesting and relevant in applications to study the testing problem where neither $\beta_1$ nor $\beta_2$ is estimable but only $\beta_1 - \beta_2$ is sparse.

Practically, the assumption that $\beta_1$ and $\beta_2$ are both dense, but their difference is sparse can be motivated by comparisons of paired high-dimensional datasets where the commonly seen sparsity assumption fails for each individual dataset. For instance, \citet{kraft2009genetic} pointed out that in some genetic studies, ``many, rather than few, variant risk alleles are responsible for the majority of the inherited risk of each common disease''. Hence, to compare the difference between two such populations, it may not be appropriate to assume that the number of responsible single nucleotide polymorphisms (SNPs) in each population is small. On the other hand, the difference between the two populations can still be accounted for by a few SNPs, as pointed out in the Framingham Offspring Study \citep{kannel1979diabetes, xia2018two}. The area of differential networks provides further examples to motivate two-sample testing of regression coefficients assuming only sparsity in their difference. Here, researchers are interested in whether two networks formulated as Gaussian graphical models, such as `brain connectivity network' and gene-gene interaction network \citep{xia2015testing,Chabonnier2015}, are different in two subgroups of population. Such complex networks are mostly of high-dimensional nature, in the sense that the number of nodes or features in the networks are large, relative to the number of observations.  Since the off-diagonal entries of the inverse covariance matrix in a graphical model can be equated to the node-wise regression coefficients, such differential network testing problems can be reduced to multiple two-sample high-dimensional regression coefficient testing problems. Such networks are often dense as interactions within different brain parts or genes are omnipresent, but because they are subject to the about same physiology, the differences between networks from two subpopulations are conceivably small, i.e., there are only a few different edges from one network to another.
In the above case of dense coefficients, sparsity assumption  may not be true, and it is impossible to obtain reasonable estimates of either regression coefficient $\beta_1$ or $\beta_2$ when $p$ is of the same magnitude as $n$.
For this reason, any approach to detect the difference between $\beta_1$ and $\beta_2$, which is built upon comparing estimates of $\beta_1$ and $\beta_2$ in some ways, fails.
In fact, any inference on $\beta_1$ or $\beta_2$ is not possible unless we make some other stringent structural assumptions on the model.
However, certain inference on the coefficient difference $\beta_1 - \beta_2$, such as testing the zero null with the sparse alternative, is feasible by exploiting sparse difference between different networks without many assumptions. 
See Section~\ref{sec:data} for an application of our method to a real-world single cell RNA-sequencing dataset, which exemplifies the aforementioned two-sample differential network analysis.

\subsection{Related Works}
The two-sample testing problem in its most general form is not well-understood in high dimensions. Most of the existing literature has focused on testing the equality of means, namely the high-dimensional equivalence of \eqref{Eqn:classical-two-sample}, see, e.g. \citet{cai2014two,chen2019two}.
Similar to our setup, in the mean testing problems, we may allow for non-sparse means in each sample and test only for sparse differences between the two population means \citep{cai2014two}. The intuitive approach for testing equality of means is to eliminate the dense nuisance parameter by taking the difference in means of the two samples and thus reducing it to a one-sample problem of testing a sparse mean against a global zero null, which is also known as the `needle in the haystack' problem well studied previously by e.g.\ \citet{Ingster1997, donoho2004higher}.
Such reduction, however, is more intricate in the regression problem, as a result of different design matrices for the two samples.

Literature is scarce for two-sample testing under high-dimensional regression setting. \citet{stadler2012two}, \citet{xia2018two}, \citet{xia2020gap} have proposed methods that work under the additional assumption so that both $\beta_1$ and $\beta_2$ can be consistently estimated. \citet{Chabonnier2015} and \citet{ZhuBradic2016} are the only existing works in the literature we are aware of that allow for non-sparse regression coefficients $\beta_1$ and $\beta_2$.
Specifically, \cite{Chabonnier2015} look at a sequence of possible supports of $\beta_1$ and $\beta_2$ on a Lasso-type solution path and then apply a variant of the classical $F$-tests to the lower-dimensional problems restricted on these supports, with the test $p$-values adjusted by a Bonferroni correction.
\cite{ZhuBradic2016} (after some elementary transformation) uses a Dantzig-type selector to obtain an estimate for $(\beta_1+\beta_2)/2$ and then use it to construct a test statistic based on a specific moment condition satisfied under the null hypothesis. As both tests depend on the estimation of nuisance parameters, their power can be compromised if such nuisance parameters are dense.

\subsection{Our contributions}
\label{Sec:OurContribution}
Our contributions are four-fold. First, we propose a novel method to solve the testing problems formulated in~\eqref{Eq:Null} and~\eqref{Eq:Alternative} for model~\eqref{Eq:Model}.
Through `complementary sketching', which is a delicate linear transformation on both the designs and responses, our method turns the testing problem with two different designs into one with the same design of dimension $m\times p$ where $m = n_1 + n_2 - p$.
After taking the difference in two regression coefficients, the problem is reduced to testing whether the coefficient in the reduced one-sample regression is zero against sparse alternatives. 
The transformation is carefully chosen such that the error distribution in the reduced one-sample regression is homoscedastic. 
This paves the way for constructing test statistics using the transformed covariates and responses. Our method is easy to implement and does not involve any complications arising from solving computationally expensive optimization problems.
Moreover, when complementary sketching is combined with any methods designed for one-sample global testing problems \citep[e.g.][]{ITV10, AriasCastroetal2011, carpentier2019minimax, carpentier2021optimal}, our proposal substantially supplies a novel class of testing and estimation procedures for the corresponding two-sample problems. However, as the design matrices after the complementary sketching transformation possess complex dependence structure, theoretical results from the one-sample testing literature cannot be directly applied, and new techniques are required in the current work to analyse our two-stage procedure.

Our second contribution is that, in the sparse regime, where the sparsity parameter $k \sim p^\alpha$ in the alternative \eqref{Eq:Alternative} for any fixed $\alpha \in (0, 1/2)$, we show that the detection limit of our procedure, defined as the minimal $\|\beta_1-\beta_2\|_2$ necessary for asymptotic almost sure separation of the alternative from the null, is minimax optimal up to a multiplicative constant under a Gaussian design. More precisely, we show that in the asymptotic regime where $n_1,n_2,p$ diverge at a fixed ratio, and for a large class of covariance matrices of the design, if $\rho^2 \gtrsim \frac{k\log p}{ n\kappa_1}$, where $\kappa_1$ is a constant depending on $n_1/n_2$ and $p/m$ only, then our test has asymptotic power \(1\) almost surely. On the other hand, in the same asymptotic regime, if $\rho^2 \leq \frac{c_{\alpha}  k\log p}{n\kappa_1}$ for some $c_{\alpha}$ depending only on $\alpha$, then almost surely no test has asymptotic size \(0\) and power \(1\).

Furthermore, our results reveal the effective sample size of the two-sample testing problem. Here, by effective sample size, we mean the sample size for a corresponding one-sample testing problem (i.e.\ testing $\beta=0$ in a linear model $Y= X\beta+\epsilon$ with rows of $X$ following the same distribution as rows of $X_1$ and $X_2$) that has an asymptotically equal detection limit; see the discussion after Theorem~\ref{Thm:LowerBound} for a detailed definition. At first glance, one might think that the effective sample size is $m$, which is the number of rows in the reduced design. This hints that the reduction to the one-sample problem has made the original two-sample problem obsolete. However, on deeper thoughts, as an imbalance in the numbers of observations in $X_1$ and $X_2$ clearly makes testing more difficult, the effective sample size has to also incorporate this effect.  We see from the previous point that uniformly for any $\alpha$ less than and bounded away from $1/2$, the detection boundary is of order $\rho^2 \asymp \frac{k \log p}{n \kappa_1}$, with the precise definition of $\kappa_1$ given in Proposition~\ref{Prop:WtW-general}. Writing $n_1/n_2 = r$ and $p/m = s$, our results on the sparse case implies that the two-sample testing problem has the same order of detection limit as in a one-sample problem with sample size $n\kappa_1 = m(r^{-1} + r +2)^{-1}$. We note that this effective sample size is proportional to $m$, and for each fixed $m$, maximized when $r=1$ (i.e. $n_1=n_2$) and approaches $m/n$ in the most imbalanced design. This is in agreement with the intuition that testing is easiest when $n_1 = n_2$ and impossible when $n_1$ and $n_2$ are too imbalanced. Our study, thus, sheds light on the intrinsic difference between two-sample and one-sample testing problems and characterizes the precise dependence of the difficulty of the two-sample problem on the sample size and dimensionality parameters.

Finally, we observe a phase transition phenomena of how the minimax detection limit depends on the sparsity parameter $k$.  On top of minimax rate optimal detection limit of our procedure in the sparse case when $k \asymp p^\alpha$ for $\alpha\in [0,1/2)$, we also prove that a modified version of our procedure, designed for denser signals, is able to achieve minimax optimal detection limit up to logarithmic factors in the dense regime $k\asymp p^{\alpha}$ for $\alpha\in (1/2,1)$. However, the detection limit is of order $\rho^2 \asymp \frac{k\log p}{n\kappa_1}$ in the sparse regime, but of order $\rho^2 \asymp p^{-1/2}$ up to logarithmic factors in the dense regime. Such a phase transition phenomenon is qualitatively similar to results previously reported in the one-sample testing problem \citep[see, e.g.][]{ITV10, AriasCastroetal2011, carpentier2019minimax, carpentier2021optimal}.

\subsection{Organization of the paper}
We describe our methodology in detail in Section~\ref{sec:method} and establish its theoretical properties in Section~\ref{sec:theory}. Numerical results illustrate the finite sample performance of our proposed algorithm in Section~\ref{sec:simu}.
We present in Section~\ref{sec:data} a real data example to compare gene regulatory networks in two close-related types of T cells.
Proofs of our main results are deferred until Section~\ref{sec:proofs} with ancillary results in Section~\ref{sec:ancillary}.

\subsection{Notation}
For any positive integer $n$, we write $[n] := \{1, \dots, n\}$. For a vector $v = (v_1, \ldots, v_n)^\top \in \mathbb{R}^n$, we define $\|v\|_0 := \sum_{i=1}^n \mathbbm{1}_{\{v_i \neq 0\}}$, $\|v\|_\infty := \max_{i \in [n]} |v_i|$ and $\|v\|_q:=\bigl\{\sum_{i=1}^{n} (v_i)^q\bigr\}^{1/q}$ for any positive integer $q$, and let $\mathcal{S}^{n-1} := \{ v \in \mathbb{R}^n : \| v\|_2 = 1\}$. The support of vector $v$ is defined by $\supp(v) : = \{i\in [n]: v_i \neq 0 \}$.

For $n\geq m$, $\mathbb{O}^{n\times m}$ denotes the space of $n\times m$ matrices with orthonormal columns. For $a \in \mathbb{R}^p$, we define $\diag(a)$ to the $p\times p$ diagonal matrix with diagonal entries filled with elements of $a$, i.e., $(\diag(a))_{i,j} = \mathbbm{1}_{\{ i = j\}} a_i$. Let $A \in \mathbb{R}^{p \times p}$, and we write $\|A\|_{\mathrm{op}}$, $\|A\|_{\mathrm{F}}$ and $\|A\|_{\max}$ for its operator, Frobenius and entrywise $\ell_\infty$ norm respectively. We define $\diag(A)$ to be the $p \times p$ diagonal matrix with diagonal entries coming from $A$, i.e., $(\diag(A))_{i,j} = \mathbbm{1}_{\{i = j\}} A_{i,j}$. We also write $\mathrm{tr}(A) := \sum_{i\in[p]} A_{i,i}$.
For a symmetric matrix $A \in\mathbb{R}^{p\times p}$ and $j\in[p]$, we write $\lambda_j(A)$ for its $j$th largest (real) eigenvalue. When $\lambda_p(A)\geq 0$, $A$ is positive semidefinite, which we denote by $A\succeq 0$. For $A$ symmetric and $k\in[p]$, the $k$-sparse operator norm of $A$ is defined by
\[
\|A\|_{k,\mathrm{op}}:= \sup_{v\in\mathcal{S}^{p-1}:\|v\|_0\leq k} |v^\top A v|.
\]
For any $S\subseteq [n]$, we write $v_S$ for the $|S|$-dimensional vector obtained by extracting coordinates of $v$ in $S$ and $A_{S,S}$ the matrix obtained by extracting rows and columns of $A$ indexed by $S$.

Given two sequences $(a_n)_{n\in\mathbb{N}}$ and $(b_n)_{n\in\mathbb{N}}$ such that $b_n > 0$ for all $n$, we write $a_n = \mathcal{O}(b_n)$ if $|a_n| \leq C b_n$ for some constant $C$. If the constant $C$ depends on some parameter $x$, we write $a_n = \mathcal{O}_x (b_n)$ instead. Also, $a_n = \smallO(b_n)$ denotes $a_n/b_n \to 0$.


\section{Testing via complementary sketching}
\label{sec:method}
In this section, we describe our testing strategy. Since we are only interested in the difference in regression coefficients in the two linear models, we reparametrize~\eqref{Eq:Model} with $\gamma := (\beta_1 + \beta_2)/2$ and $\theta := (\beta_1 - \beta_2)/2$ to separate the nuisance parameter from the parameter of interest. Define
\[
\Theta_{p,k}(\rho):=\bigl\{\theta\in\mathbb{R}^p: \|\theta\|_2\geq \rho \; \text{and}\; \|\theta\|_0\leq k\bigr\}.
\]
Under this new parametrization, the null and the alternative hypotheses can be equivalently formulated as
\[
H_0: \theta = 0 \quad \text{and}\quad H_1: \theta\in\Theta_{p,k}(\rho).
\]
The parameter of interest $\theta$ is now $k$-sparse under the alternative hypotheses. However, its inference is confounded by the possibly dense nuisance parameter $\gamma\in\mathbb{R}^p$. A natural idea, then, is to eliminate the nuisance parameter from the model. In the special design setting where $X_1 =  X_2$ (in particular, $n_1 =n_2$), this can be achieved by considering the sparse regression model $Y_1 - Y_2 =  X_1\theta + (\epsilon_1 - \epsilon_2)$. While the above example only works in a special, idealized setting, it nevertheless motivates our general testing procedure.

To introduce our test, we first concatenate the design matrices and response vectors to form
\[
 X = \begin{pmatrix}  X_1\\ X_2\end{pmatrix}
\quad \text{and} \quad
 Y = \begin{pmatrix}  Y_1\\ Y_2\end{pmatrix}.
\]
A key idea of our method is to project $X$ and $Y$ respectively along $n-p$ pairs of directions that are complementary in sign in a subset of their coordinates, a process we call \textit{complementary sketching}. Specifically, assume $n_1+n_2>p$ and define $n:=n_1+n_2$ and $m := n - p$ and let $A_1\in\mathbb{R}^{n_1\times m}$ and $A_2\in\mathbb{R}^{n_2\times m}$ be chosen such that
\begin{equation}
\label{Eq:ConditionA}
A_1^\top A_1 + A_2^\top A_2 = I_m \quad \text{and} \quad A_1^\top X_1 + A_2^\top X_2 = 0.
\end{equation}
In other words, $A := (A_1^\top, A_2^\top)^\top$ is a matrix with orthonormal columns orthogonal to the column space of $X$. Such $A_1$ and $A_2$ exist since the null space of $X$ has dimension at least $m$. Define $Z:= A_1^\top Y_1 + A_2^\top Y_2 \in \mathbb{R}^m$, $W:=A_1^\top X_1- A_2^\top  X_2 \in \mathbb{R}^{m\times p}$ and \( \xi = A_1^\top \varepsilon_1 + A_2^\top \varepsilon_2 \in \mathbb{R}^m\).  From the above construction, we have
\begin{equation}
\label{Eq:ZW}
Z = A_1^\top X_1\beta_1 +  A_2^\top  X_2 \beta_2 + (A_1^\top \epsilon_1 + A_2^\top \epsilon_2) = W \theta + \xi,
\end{equation}
where $\xi\mid W \sim N_m(0, A^\top A) = N_m(0, \sigma^2 I_m)$. We note that similar to conventional sketching \citep[see, e.g.][]{mahoney2011randomized}, the complementary sketching operation above synthesizes $m$ data points from the original $n$ observations. However, unlike conventional sketching, where one projects the design $X$ and response $Y$ by the same sketching matrix $S\in\mathbb{R}^{m\times n}$ to obtain sketched data $(SX, SY)$, here we project $X$ and $Y$ along different directions to obtain $(\tilde A^\top X, A^\top Y)$, where $\tilde A:= (A_1^\top, -A_2^\top)^\top$ is complementary in sign to $A$ in its second block. Moreover, the main purpose of the conventional sketching is to trade off statistical efficiency for computational speed by summarizing raw data with a smaller number of synthesized data points, whereas the main aim of our complementary sketching operation is to eliminate the nuisance parameter, and surprisingly, as we will see in Section~\ref{sec:theory}, there is essentially no loss of statistical efficiency introduced by our complementary sketching in this two-sample testing setting. 

To summarize, after projecting $X$ and $Y$ via complementary sketching to obtain $W$ and $Z$, we reduce the original two-sample testing problem to a one-sample problem with $m$ observations, where we test the global null of $\theta = 0$ against sparse alternatives using data $(W, Z)$. From here, we can construct test statistics as functions of $W$ and $Z$, for which we describe two different tests. The first testing procedure, detailed in Algorithm~\ref{Algo:Test}, computes the sum of squares of hard-thresholded inner products between the response $Z$ and standardized columns of the design matrix $W$ in~\eqref{Eq:ZW}. We denote the output of Algorithm~\ref{Algo:Test} with input $X_1$, $X_2$, $Y_1$ and $Y_2$ and tuning parameters $\omega$ and $\tau$ as $\psi^{\mathrm{sparse}}_{\omega,\tau}(X_1,X_2,Y_1,Y_2)$. As we will see in Section~\ref{sec:theory}, if we have a `good' estimator $\hat\sigma$ for the noise level $\sigma$, the choice of  $\omega=2\hat\sigma\sqrt{\log p}$ and $\tau = k\hat\sigma^2\log p$ would be suitable for testing against sparse alternatives in the case of $k\leq p^{1/2}$. On the other hand, in the dense case when $k > p^{1/2}$, one option would be to choose $\omega=0$. However, it turns out to be difficult to set the test threshold level $\tau$ in this dense case using the known problem parameters. Therefore, we decided to study instead the following as our second test. We apply steps 1 to 4 of Algorithm~\ref{Algo:Test} to obtain the vector $Z$, and then for a suitable choice of threshold level $\eta$, define our test as
\[
    \psi^{\mathrm{dense}}_{\eta}(X_1,X_2,Y_1,Y_2) := \mathbbm{1}{\{\|Z\|_2^2 \geq \eta\}}.
\]

\begin{algorithm}[htbp]
	\KwIn{$X_1 \in \mathbb{R}^{n_1\times p}, X_2 \in \mathbb{R}^{n_2\times p}, Y_1 \in \mathbb{R}^{n_1}, Y_2\in \mathbb{R}^{n_2}$ satisfying $n_1+n_2-p > 0$, a hard threshold level $\omega \geq 0$, and a test threshold level $\tau > 0$.}
    Set $m \leftarrow n_1+n_2-p$.\\
    Form $A\in\mathbb{O}^{n\times m}$ with columns orthogonal to the column space of $(X_1^\top, X_2^\top)^\top$.\\
    Let $A_1$ and $A_2$ be submatrices formed by the first $n_1$ and last $n_2$ rows of $A$.\\
    Set $Z\leftarrow A_1^\top Y_1 + A_2^\top Y_2$ and $W \leftarrow A_1^\top X_1- A_2^\top  X_2$.\\
    Compute $Q \leftarrow \{\diag(W^\top W)\}^{-1/2}W^\top Z $.\\
    Compute the test statistic
    \[
    T:= \sum_{j=1}^p Q_j^2\mathbbm{1}_{\{|Q_j| \geq \omega\}}.
    \]\\
    Reject the null hypothesis if $T \geq \tau$.
	\caption{Pseudo-code for complementary sketching-based test $\psi^{\mathrm{sparse}}_{\omega,\tau}$.}
	\label{Algo:Test}
\end{algorithm}
The computational complexity of both $\psi^{\mathrm{sparse}}_{\omega,\tau}$ and $\psi^{\mathrm{dense}}_{\eta}$ depends on Step~2 of Algorithm~\ref{Algo:Test}. In practice, we can form the projection matrix $A$ as follows. We first generate an $n\times m$ matrix $M$ with independent $N(0,1)$ entries, and then project columns of $M$ to the orthogonal complement of the column space of $X$ to obtain $\tilde M := (I_n - XX^\dagger)M$, where $X^\dagger$ is the Moore--Penrose pseudoinverse of $X$. Finally, we extract an orthonormal basis from the columns of $\tilde M$ via a QR decomposition $\tilde M = AR$, where $R$ is upper triangular and $A$ is a (random) $n\times m$ matrix with orthonormal columns that can be used in Step~2 of Algorithm~\ref{Algo:Test}. The overall computational complexity for our tests are therefore of order $\mathcal{O}(n^2p + nm^2)$. Finally, it is worth emphasizing that while the matrix $A$ generated this way is random, our test statistics $T=\sum_{j=1}^pQ_j^2\mathbbm{1}_{\{|Q_j|\geq \omega\}}$ and $\|Z\|_2^2$, are in fact deterministic. To see this, we observe that both
\begin{align*}
W^\top Z &= (A_1^\top X_1 - A_2^\top X_2)^\top (A_1^\top Y_1 + A_2^\top Y_2) \\
&= \begin{pmatrix}X_1^\top & -X_2^\top\end{pmatrix}\begin{pmatrix}A_1\\A_2\end{pmatrix}\begin{pmatrix}A_1^\top & A_2^\top\end{pmatrix}\begin{pmatrix}Y_1\\Y_2\end{pmatrix}
\end{align*}
and $\|Z\|_2^2 = Y^\top AA^\top Y$ depend on $A$ only through $AA^\top$, which is determined by the column space of $A$. Moreover, by Lemma~\ref{Lemma:GramW}, $(\|W_j\|^2_2)_{j\in[p]}$, being diagonal entries of $W^\top W = 4X_1^\top A_1A_1^\top X_1$, are also functions of $X$ alone. This attests that both test statistics, and consequently our two tests, are deterministic in nature.


\section{Theoretical analysis}
\label{sec:theory}
We now turn to the analysis of the theoretical performance of $\psi^{\mathrm{sparse}}_{\omega,\tau}$ and $\psi^{\mathrm{dense}}_{\eta}$. We consider both the size and power of each test, as well as the minimax lower bounds for smallest detectable signal strengths.

In addition to working under the regression model~\eqref{Eq:Model}, we further assume the following conditions in our theoretical analysis. For some constants $0< \underline{\lambda} \leq \overline{\lambda} < \infty$, we write
\begin{align*}
&\mathcal{C}:=\{\Sigma \in \mathbb{R}^{p\times p}: \text{$\Sigma \succeq 0$,  $\mathrm{diag}(\Sigma)=I_p$ and for all  $S\subseteq[p]$ with $|S|=k$}\\
& \hspace{6cm} \underline{\lambda}\leq \lambda_k(\Sigma_{S,S})\leq \lambda_1(\Sigma_{S,S})\leq \overline{\lambda}\}
\end{align*}
\begin{enumerate}[label={(C\arabic*)}, series=l_cond]
\item All rows of $X_1$ and $X_2$ are independent and follows $N_p(0, \Sigma)$ distribution such that $\Sigma\in \mathcal{C}$. \label{cond:design}
\item Parameters $n_1,n_2,p$ satisfy $m=n_1+n_2-p > 0$ and lie in the asymptotic regime where $n_1/n_2\to r$ and $p/m\to s$ as $n_1,n_2,p\to\infty$. \label{cond:asymp}
\end{enumerate}
The condition that $\diag(\Sigma)=I_p$ in \ref{cond:design} means that all columns of the design matrix should have unit variance and that $\Sigma$ is in fact a correlation matrix. This is assumed here both to simplify notation in our theoretical analysis and to reflect the common practice of column normalization in practical applications (especially when covariates are measured in different units). For a generic $\Sigma$, we remark that the testing boundary should be measured in terms of $\theta^\top \diag(\Sigma)\theta$ instead of $\|\theta\|_2^2$ and results similar to Theorems~\ref{Thm:Null}, \ref{Thm:Alternative}, \ref{Thm:LowerBound}, \ref{Thm:AlternativeDense}, \ref{Thm:LowerBoundDense} can be derived via the reduction $X \mapsto X \{\diag(\Sigma)\}^{-1/2}$. The condition in~\ref{cond:design} that the spectrum of any $k\times k$ principal submatrix of $\Sigma$ is contained in $[\underline{\lambda},\overline{\lambda}]$ is relatively mild. It requires that any $k$ covariates are not too collinear. We note that it is in particular implied if $\Sigma$ itself has a bounded condition number, or alternatively if $\Sigma$ satisfies the restricted isometry condition.  

The condition $n_1+n_2-p>0$ in \ref{cond:asymp} is necessary in this two-sample problem, since otherwise, for any prescribed value of $\Delta := \beta_1-\beta_2$,
the equation system with $\beta_1$ as unknowns
\begin{equation*}
    \begin{pmatrix}
        X_1 \\
    X_2
    \end{pmatrix}
    \beta_1 = \begin{pmatrix}
        Y_1\\
        Y_2 -X_2 \Delta
    \end{pmatrix}
\end{equation*}
has at least one solution when $(X_1^\top, X_2^\top)^\top$ has rank $n$. As a result, except in some pathological cases, we can always find $\beta_1,\beta_2\in\mathbb{R}^p$ that fit the data perfectly with $Y_1=X_1\beta_1$ and $Y_2=X_2\beta_2$, which makes the testing problem impossible. A more rigorous statement regarding the necessity of this condition in a minimax sense is proved in Proposition~\ref{Prop:MXeq1}.
Finally, we have carried out proofs of our theoretical results with finite sample arguments wherever possible. Nevertheless, due to a lack of finite-sample bounds on the empirical spectral density of matrix-variate Beta distributions, all results in this section are presented under the asymptotic regime set out in Condition~\ref{cond:asymp}. Under this condition, we were able to exploit existing results in the random matrix theory to obtain a sharp dependence of the detection limit on $s$ and~$r$.

In practice, the noise variance $\sigma^2$ is typically unknown. The problem of noise variance estimation in high-dimensional linear models is a well-studied one itself that has received much attention recently \citep{fan2012variance, sun2012scaled, homrighausen2013lasso, Dicker2014, reid2016study}. As the main focus of the current work is on two-sample testing of regression coefficients, we will make the simplifying assumption in our theoretical analysis that the noise variance $\sigma^2$ is either known or that good estimators exist. Specifically, we assume that one of the following two conditions about the noise variance is met:
\begin{enumerate}[label={(S\arabic*)}] 
\item There exists an independent estimator $\hat{\sigma}$ of $\sigma$ such that $\hat{\sigma} \xrightarrow{\text{a.s.}} \sigma$.  
    \label{cond:sigma}
    \item There exists an independent estimator $\hat{\sigma}$ such that we have $|\hat\sigma/\sigma - 1 | = \smallO(p^{-1/2}\log^{1/2}p)$ almost surely.
        \label{cond:sigma-strong}
\end{enumerate}
For most of our theoretical analysis, the much weaker condition~\ref{cond:sigma} suffices. The stronger condition~\ref{cond:sigma-strong} is needed only in Theorem~\ref{Thm:AlternativeDense}, where we derive the upper bound for the test $\psi^{\mathrm{dense}}_{\eta}$. 

We remark that condition~\ref{cond:sigma} is very mild.  It is for instance significantly weaker than most conditions where one requires the rate of convergence of $\hat{\sigma}$ of order at least a polynomial in $p$, i.e., for some $\alpha \le 1/2$ and any $\varepsilon>0$,
\(
    \mathbb{P} ( | \hat{\sigma}/\sigma -1| p^\alpha >  \varepsilon) = \mathcal{O}(1).
\)
The independence assumption on $\hat{\sigma}$ is often achieved in practice by sample splitting. For instance, using consistent estimators of $\sigma$ proposed in \citet{Dicker2014}, we may use $\mathcal{O}(\log^2 n)$ data points from each sample to estimate $\sigma$, and use the remaining samples to perform the hypothesis test. However, we will assume that $\hat\sigma$ is available independent of $(X_1,Y_1,X_2,Y_2)$ so that we can focus our theoretical analysis on the primary points of interest in this problem.

Condition \ref{cond:sigma-strong} is much stronger than \ref{cond:sigma} but slightly weaker than the usual $\sqrt{n}$-consistency found in the parametric literature, albeit we need the convergence to take place almost surely.
Similar to \ref{cond:sigma}, in reality we could possibly obtain such an estimator via the usual sample-splitting argument, where we take a fixed proportion of all data points to estimate $\sigma$ and the rest for testing.

Finally, for ease of reference, we summarize all our theoretical findings in Table~\ref{tab:all-bounds}. Here, the lower bounds are proved in a subclass of covariance matrices $\mathcal{C}(D)\subseteq \mathcal{C}$ defined in~\eqref{Eq:CD}.

\begin{table}[htp]
  \begin{center}
\begin{tabular}{rcc}
  \toprule
             & sparse &  dense  \\
             \midrule
upper bound &       \(\displaystyle \frac{7\sigma^2 k \log p}{\underline{\lambda}^2 n \kappa_1}\)                 &                  \(\displaystyle\frac{2\sigma^2\sqrt{m \log  p}}{\underline{\lambda} n \kappa_1}\)\\[.4cm]
lower bound  &          \(\displaystyle\frac{ (1- 2\alpha - \epsilon)\sigma^2 k \log p}{8 D n \kappa_1}\)               & \(\smallO(p^{-1/2} \min\{ \log^{-1/2}(ep/k), D^{-3/2}\})\)\\
\bottomrule
\end{tabular}
\caption{\label{tab:all-bounds}Comparison of upper and lower bounds of the testing boundary in terms of $\rho^2$ in both sparse and dense cases. }
\end{center}
\end{table}

\subsection{Sparse case}
We consider in this subsection the test $\psi^{\mathrm{sparse}}_{\omega,\tau}$, which is suitable for distinguishing $\beta_1$ and $\beta_2$ that differ in only a few coordinates, the setting that has more subtle phenomena and hence is most interesting to us. Our first result below states that with a choice of hard-thresholding level $\omega$ of order $\sigma\sqrt{\log p}$, the test has asymptotic size 0.

\begin{thm}
\label{Thm:Null}
If Conditions~\ref{cond:asymp} and~\ref{cond:sigma}  hold and $\beta_1=\beta_2$, then, with the choice of parameters $\tau > 0$ and $\omega = \hat{\sigma}\sqrt{(4+\varepsilon)\log p}$ for any $\varepsilon>0$, we have
\[
    \psi^{\mathrm{sparse}}_{\omega,\tau}(X_1, X_2, Y_1, Y_2) \xrightarrow{\mathrm{a.s.}} 0.
\]
\end{thm}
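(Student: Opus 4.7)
The plan is to show that, under $H_0$, the test statistic $T$ in Algorithm~\ref{Algo:Test} equals $0$ eventually almost surely, from which $\psi^{\mathrm{sparse}}_{\omega,\tau} \xrightarrow{\mathrm{a.s.}} 0$ follows for any fixed $\tau > 0$. The key distributional observation is that when $\theta = 0$, equation~\eqref{Eq:ZW} reduces to $Z = \xi$. Because $A^\top A = I_m$ and $\epsilon = (\epsilon_1^\top, \epsilon_2^\top)^\top$ is $N_n(0, \sigma^2 I_n)$ independent of $X$, we have $\xi \mid X \sim N_m(0, \sigma^2 I_m)$, so $Z$ is Gaussian of the same law and independent of $W$ (the latter being a deterministic function of $X$). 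Consequently, writing $W_j$ for the $j$th column of $W$, the normalised inner products
\[
Q_j = \frac{W_j^\top Z}{\|W_j\|_2}, \qquad j \in [p],
\]
satisfy $Q_j \mid W \sim N(0, \sigma^2)$ marginally for each $j$, even though they are jointly correlated.

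Next, I would apply a Gaussian tail bound together with a union bound over $j\in[p]$, followed by integrating out $W$ via the tower property, to obtain for any $\varepsilon' \in (0, \varepsilon)$,
\[
\mathbb{P}\Bigl(\max_{j\in[p]} |Q_j| > \sigma\sqrt{(4+\varepsilon')\log p}\Bigr) \leq 2p\cdot p^{-(2+\varepsilon'/2)} = 2p^{-1-\varepsilon'/2}.
\]
Under Condition~\ref{cond:asymp}, $p$ diverges at a linear rate with respect to the overall problem size $n$, so the right-hand side is summable along the sequence of problems. The Borel--Cantelli lemma then yields that $\max_{j\in[p]} |Q_j| \leq \sigma\sqrt{(4+\varepsilon')\log p}$ holds eventually almost surely. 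This is essentially what dictates the seemingly extravagant constant $4+\varepsilon$ in the thresholding level $\omega$: any $c \leq 4$ would fail to deliver a summable tail bound and would preclude an almost-sure conclusion.

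Finally, by Condition~\ref{cond:sigma}, $\hat\sigma \xrightarrow{\mathrm{a.s.}} \sigma$, so almost surely $\hat\sigma/\sigma > \sqrt{(4+\varepsilon')/(4+\varepsilon)}$ for all sufficiently large $n$, which gives $\omega = \hat\sigma\sqrt{(4+\varepsilon)\log p} > \sigma\sqrt{(4+\varepsilon')\log p}$ eventually almost surely. Combining this with the Borel--Cantelli conclusion, $|Q_j| < \omega$ holds simultaneously for every $j\in[p]$ eventually almost surely, so every indicator in the definition of $T$ vanishes and $T = 0 < \tau$ eventually almost surely. The argument is mostly a straightforward consequence of Gaussian concentration; the only genuine subtlety is the coupling between the data-driven threshold $\omega$ (through $\hat\sigma$) and the tail bound for $\max_j |Q_j|$, which is handled cleanly using the independence of $\hat\sigma$ from $(X_1, X_2, Y_1, Y_2)$ postulated after Condition~\ref{cond:sigma}. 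No random matrix theoretic input is needed for the null-case analysis.
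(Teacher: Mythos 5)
Your proposal is correct and follows essentially the same route as the paper: under the null $Z=\xi\sim N_m(0,\sigma^2 I_m)$ so each $Q_j\sim N(0,\sigma^2)$, a union bound over $j\in[p]$ with the Gaussian tail gives a summable bound, and the Borel--Cantelli lemma together with $\hat\sigma\xrightarrow{\mathrm{a.s.}}\sigma$ handles the data-driven threshold. The only cosmetic difference is that you first establish an almost sure bound on $\max_j|Q_j|$ at a deterministic level $\sigma\sqrt{(4+\varepsilon')\log p}$ and then compare it to $\omega$, whereas the paper conditions on the event $\{\hat\sigma/\sigma=1+\smallO(1)\}$ and bounds $\mathbb{P}(T\geq\tau)$ directly; the two are equivalent.
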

The almost sure statement in Theorem~\ref{Thm:Null} and subsequent results in this section are with respect to both the randomness in $X = (X_1^\top, X_2^\top)^\top$ and in $\epsilon = (\epsilon_1^\top, \epsilon_2^\top)^\top$. However, a closer inspection of the proof of Theorem~\ref{Thm:Null} tells us that the statement is still true if we allow an arbitrary sequence of matrices $X$ (indexed by $p$) and only consider almost sure convergence with respect to the distribution of $\epsilon$.

The control of the asymptotic power of $\psi^{\mathrm{sparse}}_{\omega,\tau}$ is more involved. A key step in the argument is to show that $W^\top W$ is suitably close to a multiple of the population covariance matrix $\Sigma$. More precisely, in Proposition~\ref{Prop:WtW-general} below, we derive entrywise and $k$-sparse operator norm controls of the Gram matrix of the design matrix sketch $W$.

\begin{prop}
    \label{Prop:WtW-general}
    Under Conditions~\ref{cond:design} and \ref{cond:asymp}, we further assume  $k\in[p]$ and let $W$ be defined as in Algorithm~\ref{Algo:Test}. Then with probability 1,
    \begin{equation}
    \label{Eq:WtWdiag-1}
    \max_{j\in[p]} \biggl|\frac{(W^\top W)_{j,j}}{4n\kappa_1} - 1\biggr| \to 0,
    \end{equation}
    where $\kappa_1 :=r/\{(1+r)^2(1+s)\}$.
    Moreover, define $\tilde{W} := W \{\diag(W^\top W)\}^{-1/2}$. 
    If
    \begin{equation}
    \label{Eq:klogpovern-1}
    \frac{k\log(ep/k)}{n} \to 0,
    \end{equation}
    then there exists $C_{s,r}>0$, depending only on $s$ and $r$, such that     with probability 1, the following holds for all but finitely many $p$:
    \begin{equation}
    \|\tilde W^\top \tilde W - \Sigma\|_{k,\mathrm{op}} \leq C_{s,r}\biggl\{\overline{\lambda}\sqrt\frac{k\log(ep/k)}{n}+\overline{\lambda}^2\sqrt\frac{\log p}{n}\biggr\}.
    \label{Eq:kopNorm-1}
    \end{equation}
\end{prop}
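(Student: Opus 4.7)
The approach is to reduce $W^\top W$ to a closed form and analyse it via the matrix Beta distribution. The key identity, implied by $A^\top X = 0$ in~\eqref{Eq:ConditionA}, is $A_1^\top X_1 = -A_2^\top X_2$, so $W = 2A_1^\top X_1$ and hence $W^\top W = 4 X_1^\top A_1 A_1^\top X_1$. Combining this with $A_1 A_1^\top = I_{n_1} - X_1(X^\top X)^{-1}X_1^\top$ (the upper-left $n_1\times n_1$ block of $AA^\top = I_n - X(X^\top X)^{-1}X^\top$), I obtain
\[
W^\top W = 4\bigl\{S - S(S+T)^{-1}S\bigr\} = 4S(S+T)^{-1}T,
\]
where $S := X_1^\top X_1$ and $T := X_2^\top X_2$. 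Sphering via $\tilde X_i := X_i \Sigma^{-1/2}$ (whose rows are i.i.d.\ $N_p(0, I_p)$), setting $\tilde S := \tilde X_1^\top\tilde X_1$, $\tilde T := \tilde X_2^\top\tilde X_2$, and defining the matrix Beta $B := (\tilde S + \tilde T)^{-1/2}\tilde S(\tilde S + \tilde T)^{-1/2} \sim \mathrm{Beta}_p(n_1/2, n_2/2)$ (independent of the Wishart $\tilde S + \tilde T \sim W_p(n, I_p)$), the key representation becomes
\[
W^\top W = 4\Sigma^{1/2}(\tilde S + \tilde T)^{1/2} B(I_p - B)(\tilde S + \tilde T)^{1/2}\Sigma^{1/2}.
\]

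For any $v\in\mathbb{R}^p$, setting $\alpha_v := v^\top X^\top X v = v^\top \Sigma^{1/2}(\tilde S + \tilde T)\Sigma^{1/2} v$ and the unit vector $u_v := (\tilde S + \tilde T)^{1/2}\Sigma^{1/2} v/\sqrt{\alpha_v}$, I obtain the factorisation $v^\top W^\top W v/4 = \alpha_v \cdot u_v^\top B(I_p - B) u_v$. For~\eqref{Eq:WtWdiag-1} I specialise to $v = e_j$: then $\alpha_j \sim \chi_n^2$ (as $\Sigma_{jj} = 1$), and a union bound with standard $\chi^2$ tails gives $\max_j|\alpha_j/n - 1|\to 0$ a.s. Conditionally on $\tilde S + \tilde T$, $B$ is orthogonally invariant, so writing $B = O\diag(\lambda_1, \ldots, \lambda_p) O^\top$ with $O$ a Haar-uniform orthogonal matrix independent of the $\lambda_k$'s, the quantity $u_j^\top B(I_p - B) u_j = \sum_{k=1}^p (O^\top u_j)_k^2\,\lambda_k(1 - \lambda_k)$ is a spherical quadratic form with bounded entries; Hanson--Wright concentration together with a union bound over $j$ yields $\max_j |u_j^\top B(I_p - B) u_j - p^{-1}\mathrm{tr}(B(I_p - B))|\to 0$ a.s. Almost-sure weak convergence of the empirical spectrum of $B$ to the Wachter/Jacobi limit, together with a direct moment computation (using $\mathbb{E}\,\mathrm{tr}(B) = pn_1/n$ and the explicit second-trace moment of $B$), identifies $p^{-1}\mathrm{tr}(B(I_p - B)) \to \kappa_1$ a.s., completing~\eqref{Eq:WtWdiag-1}.

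For~\eqref{Eq:kopNorm-1}, the same factorisation applies to any unit $k$-sparse $v$. Sparse sub-Gaussian covariance concentration for the design with $\Sigma \in \mathcal{C}$ gives $\|X^\top X/n - \Sigma\|_{k, \mathrm{op}} = \mathcal{O}(\overline{\lambda}\sqrt{k\log(ep/k)/n})$ a.s., bounding $\sup_v|\alpha_v/n - v^\top \Sigma v|$ uniformly. For the spherical quadratic form, I combine an $\epsilon$-net on the $k$-sparse unit sphere (of log-covering number $\mathcal{O}(k\log(ep/k))$) with the conditional Hanson--Wright bound above, to obtain $\sup_v|u_v^\top B(I_p - B) u_v - \kappa_1| = \mathcal{O}_{s,r}(\sqrt{k\log(ep/k)/n})$ a.s. Multiplying the two factors and subtracting $v^\top \Sigma v$ then controls $\|W^\top W/(4n\kappa_1) - \Sigma\|_{k,\mathrm{op}}$ at the rate of the first term of~\eqref{Eq:kopNorm-1}. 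To transfer this to $\tilde W^\top \tilde W = D^{-1/2}W^\top W D^{-1/2}$ where $D = \diag(W^\top W)$, Part~1 yields $\max_j|D_{jj}/(4n\kappa_1) - 1| = \mathcal{O}(\sqrt{\log p/n})$ a.s.; combining this with $\|W^\top W/(4n\kappa_1)\|_{k, \mathrm{op}} = \mathcal{O}(\overline{\lambda})$ contributes the additional $\mathcal{O}(\overline{\lambda}^2\sqrt{\log p/n})$ term of~\eqref{Eq:kopNorm-1}.

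The main obstacle is the \emph{uniform} control of $u_v^\top B(I_p - B) u_v$ across the infinite family indexed by $k$-sparse unit $v$, since the mapping $v\mapsto u_v$ depends on the random Wishart $\tilde S + \tilde T$. The resolution is to condition on $\tilde S + \tilde T$, exploit orthogonal invariance of $B$ to recast $u_v^\top B(I_p - B) u_v$ as a Haar-sphere projection against a bounded-eigenvalue diagonal, and combine Hanson--Wright with an $\epsilon$-net over $k$-sparse supports. A second delicate piece is identifying the Wachter-limit integral as $\kappa_1$ precisely; this is achieved by a trace-moment computation that, as a sanity check, reduces to $(1 - p/n)/4$ in the balanced case $r = 1$.
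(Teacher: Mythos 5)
Your proposal is correct in outline and follows essentially the same route as the paper: the identity $W^\top W = 4X_1^\top X_1(X^\top X)^{-1}X_2^\top X_2$, sphering to reduce to $\Sigma = I_p$, the representation through a $\mathrm{Beta}_p(n_1/2,n_2/2)$ matrix $B$ and the quadratic form $u^\top B(I_p-B)u$ against a Haar-distributed direction, $\chi^2$/Hanson--Wright concentration, a net over the $k$-sparse sphere, and the Bai--Hu--Pan--Zhou limiting spectral distribution to identify $\kappa_1$ (the paper uses the QR/Cholesky factor where you use the symmetric Wishart square root, and it exploits the exact distributional identity $u^\top \Delta_I u \stackrel{\mathrm{d}}{=} e_1^\top \Delta_I e_1$ rather than splitting the quadratic form into the product $\alpha_v\cdot u_v^\top B(I_p-B)u_v$; these are cosmetic differences).

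One slip to repair: the weak convergence of the empirical spectrum gives $p^{-1}\mathrm{tr}\bigl(B(I_p-B)\bigr)\to\kappa_1$ almost surely \emph{without a rate}, so your claims $\max_j|D_{jj}/(4n\kappa_1)-1|=\mathcal{O}(\sqrt{\log p/n})$ and $\sup_v|u_v^\top B(I_p-B)u_v-\kappa_1|=\mathcal{O}_{s,r}(\sqrt{k\log(ep/k)/n})$ are not justified as written --- only the deviations from the \emph{random} centering $h_n:=4p^{-1}\mathrm{tr}\bigl(B(I_p-B)\bigr)$ admit those rates. This is why \eqref{Eq:WtWdiag-1} is stated as a rateless $o(1)$ and why the paper defines $\Delta:=W^\top W/n-h_n\Sigma$ and $\hat D:=\diag(W^\top W)/(nh_n)$: the factor $h_n$ cancels in the self-normalised quantity $\tilde W^\top\tilde W=D^{-1/2}W^\top WD^{-1/2}$, so \eqref{Eq:kopNorm-1} survives, but you must carry the random centering through the argument rather than quoting rates against the deterministic $\kappa_1$.
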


We note that condition~\eqref{Eq:klogpovern-1} is relatively mild and would be satisfied if $k \leq  p^\alpha$ for any $\alpha \in [0, 1)$. As we will see later, the quantity $n\kappa_1$ in~\eqref{Eq:WtWdiag-1} can be viewed as the `effective sample size' of the two-sample testing problem. A more detailed discussion about the intuition and interpretation of this quantity is provided after Theorem~\ref{Thm:LowerBound}.

All theoretical results in this section except for Theorem~\ref{Thm:Null} assume the random design Condition~\ref{cond:design} to hold. However, as revealed by the proofs, for any given (deterministic) sequence of $X$, these results remain true as long as \eqref{Eq:WtWdiag-1} and \eqref{Eq:kopNorm-1} are satisfied. The asymptotic nature of Proposition~\ref{Prop:WtW-general} is a result of our application of \citet[Theorem~1.1]{BaiHuPanZhou2015}, which guarantees an almost sure convergence of the empirical spectral distribution of Beta random matrices in the weak topology. This sets the tone for the asymptotic nature of our results, which depend on the aforementioned limiting spectral distribution.

The following theorem provides power control of our procedure $\psi^{\mathrm{sparse}}_{\omega,\tau}$, when the $\ell_2$ norm of the scaled difference in regression coefficient $\theta=(\beta_1-\beta_2)/2$ exceeds an appropriate threshold.
\begin{thm}
\label{Thm:Alternative}
Under Conditions~\ref{cond:design}, \ref{cond:asymp} and~\ref{cond:sigma}, we further assume $k\in[p]$ and that~\eqref{Eq:klogpovern-1} holds. If $\theta=(\beta_1-\beta_2)/2 \in \Theta_{p,k}(\rho)$ with $\rho^2\geq \frac{7\sigma^2 k \log p}{\underline{\lambda}^2n\kappa_1}$, and we set input parameters $\omega=\hat{\sigma}\sqrt{(4+\varepsilon)\log p}$ for any $\varepsilon\in(0,1)$ and $\tau\leq \hat\sigma^2 k\log p$ in Algorithm~\ref{Algo:Test}, then
\[
\psi^{\mathrm{sparse}}_{\omega,\tau}(X_1,X_2,Y_1,Y_2)\xrightarrow{\mathrm{a.s.}} 1.
\]
\end{thm}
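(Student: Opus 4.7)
The plan is to show that already the restriction of $T$ to the sum over the signal support $S := \supp(\theta)$ (with $|S|\leq k$) exceeds $\tau$ almost surely. With $D := \{\diag(W^\top W)\}^{1/2}$ and $\tilde W := WD^{-1}$ (so each column of $\tilde W$ has unit $\ell_2$-norm), decompose $Q = D^{-1}W^\top Z = \mu + \eta$, where $\mu := \tilde W^\top \tilde W \cdot D\theta$ is a signal term and $\eta := \tilde W^\top \xi$ is, conditional on $W$, centered Gaussian with covariance $\sigma^2\tilde W^\top \tilde W$. The elementary bound $Q_j^2\mathbbm{1}_{\{|Q_j|\geq \omega\}}\geq Q_j^2 - \omega^2$ summed over $j\in S$ yields $T \geq \|Q_S\|_2^2 - k\omega^2 \geq (\|\mu_S\|_2 - \|\eta_S\|_2)^2 - k\omega^2$.

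For the signal, since $D\theta$ is supported on $S$, $\mu_S = (\tilde W^\top \tilde W)_{S,S}(D\theta)_S$. Proposition~\ref{Prop:WtW-general} supplies both $\min_j D_{jj}^2/(4n\kappa_1)\to 1$ a.s.\ and $\|(\tilde W^\top \tilde W - \Sigma)_{S,S}\|_{\mathrm{op}}\leq \|\tilde W^\top \tilde W - \Sigma\|_{k,\mathrm{op}}\to 0$ a.s. Combining these with $\lambda_k(\Sigma_{S,S})\geq \underline\lambda$ via Weyl's inequality gives $\|\mu_S\|_2 \geq 2(\underline\lambda - o(1))\sqrt{n\kappa_1}\|\theta\|_2$, and hence $\|\mu_S\|_2^2 \geq (28-o(1))\sigma^2 k\log p$ a.s.\ under the signal strength assumption $\rho^2\geq 7\sigma^2 k\log p/(\underline\lambda^2 n\kappa_1)$.

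For the noise, $\|\eta_S\|_2^2$ is a Gaussian quadratic form with conditional mean $\sigma^2\,\mathrm{tr}((\tilde W^\top \tilde W)_{S,S}) = \sigma^2|S|\leq \sigma^2 k$ (since every diagonal entry of $\tilde W^\top \tilde W$ equals $1$) and conditional spectral norm bounded by $\sigma^2(\overline\lambda + o(1))$. A Hanson--Wright tail inequality conditional on $W$, combined with a Borel--Cantelli argument summing the exponentially small tails over $p$, gives $\|\eta_S\|_2^2 \leq \sigma^2 k(1+o(1))$ a.s., so $\|\eta_S\|_2 = o(\|\mu_S\|_2)$. Substituting these bounds, along with $\omega^2 = \hat\sigma^2(4+\varepsilon)\log p$ and $\hat\sigma\to\sigma$ a.s.\ from~\ref{cond:sigma}, gives $T\geq (24 - \varepsilon - o(1))\sigma^2 k\log p$, which strictly exceeds $\tau\leq \hat\sigma^2 k\log p = (1+o(1))\sigma^2 k\log p$ for any $\varepsilon\in(0,1)$ and all $p$ large enough.

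The main obstacle is the use of the $k$-sparse operator norm bound from Proposition~\ref{Prop:WtW-general}, rather than a full operator norm bound that would generally diverge in the regime $p\asymp n$, to control the quadratic form $\mu_S$. This step succeeds precisely because $D\theta$ is $k$-sparse, so only the $|S|\times|S|$ principal submatrix of $\tilde W^\top \tilde W$ is relevant both to the signal and to the relevant noise covariance. A secondary technical point is upgrading the conditional Hanson--Wright tail to an almost sure bound, which follows routinely from the summability of the exponentially small tails in $p$.
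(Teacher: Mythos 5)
Your overall architecture is sound and close in spirit to the paper's proof: both restrict attention to $S=\supp(\theta)$, use the bound $T\geq \|Q_S\|_2^2 - k\omega^2$, and invoke the $k$-sparse operator norm control of Proposition~\ref{Prop:WtW-general} together with $\lambda_k(\Sigma_{S,S})\geq\underline{\lambda}$ to get $\|\mu_S\|_2^2\geq (28-\smallO(1))\sigma^2 k\log p$; that part is correct. The gap is in your noise step. The claim that Hanson--Wright plus Borel--Cantelli yields $\|\eta_S\|_2^2\leq \sigma^2 k(1+\smallO(1))$ almost surely is false when $k = \mathcal{O}(\log p)$: to make the tail probabilities summable over $p$ you must take the deviation parameter $t\gtrsim \log p$, and the resulting one-sided bound is of the form $\sigma^2\bigl\{|S| + 2\sqrt{\mathrm{tr}(\Sigma_0^2)\,t} + 2\lambda_1(\Sigma_0)t\bigr\}$ with $\Sigma_0 = \sigma^2(\tilde W^\top\tilde W)_{S,S}$, whose last term is of order $\sigma^2\log p$ and is \emph{not} $\smallO(\sigma^2 k)$ for bounded $k$ (already for $k=1$, $\eta_S$ is a single $N(0,\sigma^2)$ variable and no almost-sure bound better than roughly $2\sigma^2\log p$ is available along the sequence in $p$). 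Consequently $\|\eta_S\|_2 = \smallO(\|\mu_S\|_2)$ fails in the small-$k$ regime, and the constant $24-\varepsilon$ in your final display is not justified.

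The gap is repairable without changing your strategy: using $\lambda_1\bigl((\tilde W^\top\tilde W)_{S,S}\bigr)\leq \min\{\overline{\lambda}+\smallO(1),\,|S|\}\leq k$, the correct almost-sure bound is $\|\eta_S\|_2^2\leq \sigma^2\bigl\{k+2\sqrt{2k^2\log p}+4k\log p\bigr\} = (4+\smallO(1))\sigma^2 k\log p \leq \bigl(\tfrac17+\smallO(1)\bigr)\|\mu_S\|_2^2$, whence $(\|\mu_S\|_2-\|\eta_S\|_2)^2\geq (1-1/\sqrt{7})^2(28-\smallO(1))\sigma^2 k\log p \approx 10.8\,\sigma^2 k\log p$ and $T\geq (10.8-4-\varepsilon-\smallO(1))\sigma^2 k\log p>\tau$, so the conclusion survives with the constant $7$ in the signal condition. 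The paper avoids this bookkeeping by observing that $\|(\tilde W^\top\tilde W)_{S,S}^{-1/2}Q_S\|_2^2$ is exactly a noncentral $\chi^2_k(\nu)$ variable and applying Birg\'e's lower-tail inequality, which folds the signal--noise interaction into the single fluctuation term $2\sqrt{(2k+4\nu)\log p}$; you should either adopt that route or correct the noise bound as above.
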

The size and power controls in Theorems~\ref{Thm:Null} and \ref{Thm:Alternative} jointly provide an upper bound on the minimax detection threshold. Specifically, let $P_{\beta_1, \beta_2}^X$ be the conditional distribution of $Y_1,Y_2$ given $X_1, X_2$ under model~\eqref{Eq:Model}.
Conditionally on the design matrices $X_1$ and $X_2$ and given $k\in[p]$ and $\rho > 0$, the (conditional) \emph{minimax risk} of testing $H_0: \beta_1=\beta_2$ against $H_1: \theta=(\beta_1-\beta_2)/2 \in\Theta_{p,k}(\rho)$ is defined as
\[
    \mathcal{M}_X(k,\rho) := \inf_{ \psi} \biggl\{\sup_{\beta\in\mathbb{R}^p} P_{\beta,\beta}^X(\psi\neq 0) + \hspace{-0.3cm} \sup_{\substack{\beta_1,\beta_2\in\mathbb{R}^p\\ (\beta_1-\beta_2)/2\in\Theta_{p,k}(\rho)}} \hspace{-0.3cm} P_{\beta_1,\beta_2}^X(\psi \neq 1)\biggr\},
\]
where we suppress all dependences on the dimension of data for notational simplicity and the infimum is taken over all $\psi: (X_1, Y_1, X_2, Y_2) \mapsto \{0,1\}$.
If $\mathcal{M}_X(k,\rho) \xrightarrow{\mathrm{p}} 0$, there exists a test $\psi$ that with asymptotic probability $1$ correctly differentiates the null and the alternative.  On the other hand, if $\mathcal{M}_X(k,\rho) \xrightarrow{\mathrm{p}} 1$, then asymptotically no test can do better than a random guess. The following corollary provides an upper bound on the signal size $\rho$ for which the minimax risk is asymptotically zero.

\begin{cor}
\label{Cor:SparseUpper}
Under conditions~\ref{cond:design}, \ref{cond:asymp} and \ref{cond:sigma}, we further assume $k\in[p]$ and that~\eqref{Eq:klogpovern-1} holds. If $\rho^2\geq \frac{7\sigma^2 k\log p}{\underline{\lambda}^2n\kappa_1}$, and we set input parameters $\omega=\hat\sigma\sqrt{(4+\varepsilon)\log p}$ for any $\varepsilon\in(0,1]$ and $\tau\in(0, \hat\sigma^2 k\log p]$ in Algorithm~\ref{Algo:Test}, then
\[
\mathcal{M}_X(k,\rho) \leq \sup_{\beta\in\mathbb{R}^p} P_{\beta,\beta}^X(\psi_{\omega,\tau}^{\mathrm{sparse}}\neq 0) + \hspace{-0.3cm} \sup_{\substack{\beta_1,\beta_2\in\mathbb{R}^p\\ (\beta_1-\beta_2)/2\in\Theta_{p,k}(\rho)}} \hspace{-0.3cm} P_{\beta_1,\beta_2}^X(\psi_{\omega,\tau}^{\mathrm{sparse}} \neq 1) \xrightarrow{\mathrm{a.s.}} 0
\]
\end{cor}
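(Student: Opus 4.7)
The corollary is essentially a packaging of Theorems~\ref{Thm:Null} and~\ref{Thm:Alternative}. The displayed inequality follows immediately by specialising the infimum in the definition of $\mathcal{M}_X(k,\rho)$ to the specific test $\psi_{\omega,\tau}^{\mathrm{sparse}}$; the parameter choices $\omega=\hat\sigma\sqrt{(4+\varepsilon)\log p}$ and $\tau\in(0,\hat\sigma^2 k\log p]$ are exactly those covered by the two theorems, so no further work is needed for the inequality itself. It remains to establish that both suprema on the right-hand side converge to $0$ almost surely.

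For the size supremum, I would exploit the fact that under the null $\beta_1=\beta_2=\beta$, the orthogonality relation $A_1^\top X_1 + A_2^\top X_2=0$ used to construct $W$ and $Z$ gives $Z = A_1^\top X_1\beta + A_2^\top X_2\beta + \xi = \xi$, so the joint law of $(W,Z)$, and hence of the test statistic $T$, does not depend on $\beta$. The supremum over $\beta$ is therefore vacuous, and Theorem~\ref{Thm:Null} gives $\psi_{\omega,\tau}^{\mathrm{sparse}}\xrightarrow{\mathrm{a.s.}} 0$; since this sequence takes values in $\{0,1\}$, dominated convergence conditional on $X$ upgrades almost sure convergence of the indicator to almost sure convergence of $P_{\beta,\beta}^X(\psi_{\omega,\tau}^{\mathrm{sparse}}\neq 0)$ to $0$.

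For the power supremum, the reparametrisation $\theta=(\beta_1-\beta_2)/2$ introduced in Section~\ref{sec:method} yields $Z = W\theta + \xi$, so the distribution of $T$ depends on $(\beta_1,\beta_2)$ only through $\theta$ and the supremum collapses to one over $\theta\in\Theta_{p,k}(\rho)$. Theorem~\ref{Thm:Alternative} provides the required almost sure convergence of $\psi_{\omega,\tau}^{\mathrm{sparse}}$ to $1$ for any such $\theta$. The only genuinely delicate point, which I expect to be the main obstacle, is upgrading this pointwise statement to one that is uniform over the uncountable family $\Theta_{p,k}(\rho)$. I would handle this by inspecting the proof of Theorem~\ref{Thm:Alternative}: its high-probability events are driven by concentration statements such as~\eqref{Eq:WtWdiag-1} and~\eqref{Eq:kopNorm-1} in Proposition~\ref{Prop:WtW-general} that do not depend on $\theta$ at all, together with a lower bound on $T$ whose worst case over $\Theta_{p,k}(\rho)$ is attained on the boundary $\|\theta\|_2=\rho$, $\|\theta\|_0=k$ by monotonicity in signal strength. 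Combining these two ingredients produces a single probability-one event on which $\psi_{\omega,\tau}^{\mathrm{sparse}}(X_1,X_2,Y_1,Y_2)=1$ eventually for every $\theta\in\Theta_{p,k}(\rho)$, and a final application of dominated convergence conditional on $X$ yields the claimed almost sure convergence.
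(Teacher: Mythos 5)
Your proposal is correct and follows essentially the same route as the paper: the displayed inequality is immediate from the definition of $\mathcal{M}_X(k,\rho)$, and the almost sure convergence of both suprema is obtained by observing that the proofs of Theorems~\ref{Thm:Null} and~\ref{Thm:Alternative} rest on conditional probability bounds for the sketched model $Z=W\theta+\xi$ that are uniform in the nuisance parameter and in $\theta\in\Theta_{p,k}(\rho)$ respectively. The paper states this more tersely (the explicit bounds $p^{-1-\varepsilon/4}$ and $2p^{-2}$ in those proofs already hold uniformly, so no separate dominated-convergence or boundary-reduction step is needed), but the substance is identical.
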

Corollary~\ref{Cor:SparseUpper} shows that the test $\psi_{\omega,\tau}^{\mathrm{sparse}}$ has an asymptotic detection limit, measured in $\|\beta_1-\beta_2\|_2$, of at most $\{\frac{7\sigma^2 k\log p}{\underline{\lambda}^2 n\kappa_1}\}^{1/2}$ for all $k$ satisfying~\eqref{Eq:klogpovern-1}. While \eqref{Eq:klogpovern-1} is satisfied for $k\leq p^{\alpha}$ with any $\alpha\in[0,1)$, the detection limit upper bound shown in Corollary~\ref{Cor:SparseUpper} is suboptimal when $\alpha > 1/2$, as we will see later in Theorem~\ref{Thm:AlternativeDense}. On the other hand, Theorem~\ref{Thm:LowerBound} below shows that  when $\alpha < 1/2$, the detection limit of $\psi_{\omega,\tau}^{\mathrm{sparse}}$ is essentially optimal for a large subclass of covariance matrices. For some $D > 0$ (which we allow to diverge as $p\to\infty$), we write $\mathrm{RowSp}(D) \subseteq \mathbb{R}^{p\times p}$ for the subset of $p\times p$ matrices having at most $D$ nonzero elements in each row and define
\begin{equation}
\label{Eq:CD}
\mathcal{C}(D):=\biggl\{\Sigma \in \mathcal{C}: \text{$\Sigma = \Sigma_0 + \Gamma$ for $\Sigma_0 \in \mathrm{RowSp}(D)$ and $\|\Gamma\|_{\max}\leq \frac{D}{k\log^2 p}$}\biggr\}.
\end{equation}
The class $\mathcal{C}(D)$ consists of matrices admitting a sparse plus noise decomposition, and contains many common covariance matrices for relatively small choice of $D$. For instance, if $\Sigma$ is a banded matrix, we may take $D$ to be its bandwidth and $\Gamma = 0$. When $\Sigma = (\Sigma_{j,\ell})_{j,\ell\in[p]} = (\varrho^{|j-\ell|})_{j-\ell\in[p]}$ has an auto-regressive structure, we may take $D=(\log k + \log\log^2 p) / \log (varrho)$. Another example is when $\Sigma = V\Lambda V^\top + \Xi$ has a spiked covariance structure such that $V$ is uniformly sampled from $\mathbb{O}^{p\times r}$ and $\Lambda, \Xi\succeq 0$ are diagonal (this is commonly encountered in e.g.\ factor analysis). In this case, each row of $V$ has $\ell_2$ norm bounded by $\sqrt{(r\log p)/p}$ with high probability, so $\|V\Lambda V^\top\|_{\max} \leq (\overline{\lambda} r \log p)/p$ and hence $\Sigma \in \mathcal{C}(D)$ with $D = \max\{1, (\overline{\lambda} r \log^3 p)/p\}$. 

\begin{thm}
\label{Thm:LowerBound}
Under conditions~\ref{cond:design} and~\ref{cond:asymp}, if further assume $\Sigma \in\mathcal{C}(D)$ for some $D>0$, $k\leq p^{\alpha}$ for some $\alpha\in[0,1/2)$ and $\rho^2\leq \frac{(1-2\alpha-\varepsilon)\sigma^2 k\log p}{8Dn\kappa_1}$ for some $\varepsilon \in (0, 1-2\alpha]$, then $\mathcal{M}_X(k,\rho) \xrightarrow{\mathrm{a.s.}} 1$.
\end{thm}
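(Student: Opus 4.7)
The plan is to apply Le Cam's mixture method, first reducing the problem to the sketched model to eliminate the nuisance parameter $\gamma := (\beta_1+\beta_2)/2$. Setting $\tilde X := (X_1^\top, -X_2^\top)^\top$ and $\Sigma_\nu := \nu^2 X X^\top + \sigma^2 I_n$, I would place a Gaussian prior $\gamma \sim N(0, \nu^2 I_p)$ under both hypotheses, together with a sparse prior $\pi$ on $\theta$ under the alternative: $\theta_j := (\rho/\sqrt{k})\,v_j\,\mathbbm{1}\{j\in S\}$, where $S$ is uniform on $\binom{[p]}{k}$ and $v = (v_j)$ is iid Rademacher independently, so that $\theta \in \Theta_{p,k}(\rho)$ almost surely. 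A direct Gaussian density-ratio calculation gives
\[
1 + \chi^2(\overline{\mathbb{P}}_{\pi,\nu} \,\|\, \mathbb{P}_{0,\nu}) = \mathbb{E}_{\theta, \theta' \stackrel{\mathrm{iid}}{\sim} \pi}\exp\bigl((\tilde X\theta)^\top \Sigma_\nu^{-1}(\tilde X\theta')\bigr).
\]
By the Woodbury identity, $\Sigma_\nu^{-1} \to \sigma^{-2}AA^\top$ as $\nu \to \infty$; and since $A^\top \tilde X = A_1^\top X_1 - A_2^\top X_2 = W$, the limit equals $\chi^2_\infty := \mathbb{E}\exp(\sigma^{-2}\theta^\top W^\top W \theta') - 1$. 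Le Cam's inequality then yields $\mathcal{M}_X(k,\rho) \geq 1 - \sqrt{\chi^2_\infty/2}$, reducing the task to establishing $\chi^2_\infty \xrightarrow{\mathrm{a.s.}} 0$.

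On the probability-one event guaranteed by Proposition~\ref{Prop:WtW-general}, $\|\diag(W^\top W) - 4n\kappa_1 I_p\|_{\max} = o(n\kappa_1)$ and $\|\tilde W^\top \tilde W - \Sigma\|_{k,\mathrm{op}} = o(1)$. Decomposing $W^\top W = \diag(W^\top W)^{1/2}\tilde W^\top\tilde W\diag(W^\top W)^{1/2}$ and using polarisation on the $2k$-sparse vectors $\theta \pm \theta'$, I would replace $\theta^\top W^\top W \theta'$ by $4n\kappa_1 \theta^\top\Sigma\theta'$ inside the exponent with asymptotically vanishing error under the assumed scaling. Writing $c := 4n\kappa_1/\sigma^2$ and using the $\mathcal{C}(D)$ decomposition $\Sigma = \Sigma_0 + \Gamma$, I would then split via Cauchy--Schwarz:
\[
\mathbb{E}_{\theta, \theta'}\exp(c\,\theta^\top \Sigma \theta') \leq \sqrt{\mathbb{E}\exp(2c\,\theta^\top \Sigma_0 \theta')} \cdot \sqrt{\mathbb{E}\exp(2c\,\theta^\top \Gamma\theta')}.
\]
The Rademacher bilinear form $\theta^\top \Gamma\theta'$ has variance at most $\rho^4 \|\Gamma\|_{\max}^2 \leq D^2\rho^4/(k\log^2 p)^2$, and applying a Hanson--Wright type MGF bound for Rademacher chaos shows that the second factor is $1 + O(\log^{-2}p) = 1+o(1)$ under the assumed upper bound on $\rho^2$.

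For the first factor, I would first take expectations over the Rademacher signs conditional on $(S, S')$ using $\mathbb{E}[\exp(tvv')] = \cosh(t)$ for independent Rademachers, and exploit the row-sparsity of $\Sigma_0$ (at most $D$ nonzeros per row, each bounded in absolute value since $\Sigma$ is a correlation matrix) via a row-wise aggregation. Taking expectation over $(S, S')$ next: since $k \leq p^\alpha$ with $\alpha < 1/2$, the expected number of nonzero entries of $\Sigma_0$ in the random block $S\times S'$ is at most $Dk^2/p = Dp^{2\alpha - 1} = o(1)$, and a Poisson-type MGF bound for the hypergeometric distribution yields an estimate of the form
\[
\mathbb{E}\exp(2c\,\theta^\top \Sigma_0 \theta') \leq \bigl(1 + o(1)\bigr)\exp\bigl(Dk^2p^{-1} \cdot \exp(8Dn\kappa_1\rho^2/(k\sigma^2))\bigr).
\]
This is $1 + o(1)$ precisely when $8Dn\kappa_1\rho^2/(k\sigma^2) \leq (1 - 2\alpha - \varepsilon)\log p$, equivalently $\rho^2 \leq (1-2\alpha-\varepsilon)\sigma^2 k\log p/(8Dn\kappa_1)$, as assumed. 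The main obstacle lies in the row-wise aggregation for generic $\Sigma_0$: the case $D = 1$ (diagonal $\Sigma_0$) reduces cleanly to the classical Ingster computation $\mathbb{E}\cosh(c_0)^{|S \cap S'|}$, but for $\Sigma_0 \in \mathrm{RowSp}(D)$ the Rademacher products over pairs $(j, j')$ sharing a row or column are correlated, so a careful Hanson--Wright type bound per row is required to produce exactly the factor $D$ in the denominator of the threshold without incurring extraneous logarithmic losses.
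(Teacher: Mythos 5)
Your overall strategy is the same as the paper's: a second-moment (\(\chi^2\)) lower bound against a uniform prior on \(k\)-sparse sign vectors, reduction of the cross term to \(\mathbb{E}\exp(\sigma^{-2}\theta^\top W^\top W\theta')\), a Cauchy--Schwarz split isolating the row-sparse part \(\Sigma_0\), and a hypergeometric-domination/\(\cosh\) computation for \(\mathbb{E}\exp(2c\,\theta^\top\Sigma_0\theta')\) that produces exactly the threshold \(\rho^2 \le (1-2\alpha-\varepsilon)\sigma^2 k\log p/(8Dn\kappa_1)\). Your elimination of the nuisance via a diffuse Gaussian prior \(\gamma\sim N(0,\nu^2 I_p)\) with \(\nu\to\infty\) is a legitimate alternative to the paper's deterministic least-favourable choice \(\gamma=L\theta\) with \(L=(X^\top X)^{-1}(X_2^\top X_2-X_1^\top X_1)\); both yield the same limiting quantity since \(\Sigma_\nu^{-1}\to\sigma^{-2}AA^\top\) and \(A^\top\tilde X=W\), and with a finitely supported prior on \(\theta\) the interchange of limit and expectation is unproblematic. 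Your worry about correlated Rademacher products in the row-wise aggregation is also resolvable exactly as in the paper's Lemma~\ref{Lemma:Chaos2}: condition on \((\theta',J')\) first, so the exponent becomes a \emph{linear} form \(\sum_{j\in J}c_j\theta_j\) with \(|c_j|\le D\|\Sigma_0\|_{\max}\), and only the count of rows \(j\) with \(c_j\neq 0\) matters, which is stochastically dominated by \(\mathrm{HyperGeom}(k;Dk,p)\) and in turn by \(\mathrm{Bin}(k,Dk/p)\) in the MGF order.

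The genuine gap is the step where you ``replace \(\theta^\top W^\top W\theta'\) by \(4n\kappa_1\theta^\top\Sigma\theta'\) inside the exponent with asymptotically vanishing error'' via polarisation and the \(2k\)-sparse operator norm. The resulting deterministic error bound is of order \(\rho^2\,\|W^\top W-4n\kappa_1\Sigma\|_{2k,\mathrm{op}} \asymp n\kappa_1\rho^2\sqrt{k\log(ep/k)/n}\). Under the assumed scaling \(n\kappa_1\rho^2\lesssim k\log p/D\) and \(n\asymp p\), this is of order \(k^{3/2}\log^{3/2}(p)/\sqrt{p}\), which diverges whenever \(k\ge p^{1/3+\delta}\); so the replacement fails on the range \(\alpha\in(1/3,1/2)\) that the theorem must cover. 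The fix is to \emph{not} take the error out of the expectation: keep the matrix \(\tilde W^\top\tilde W-\Sigma_0\) (with \(\Gamma\) folded into it) as one Cauchy--Schwarz factor and bound \(\mathbb{E}\exp\bigl(2\tilde\theta^\top(\tilde W^\top\tilde W-\Sigma_0)\tilde\theta'\bigr)\) by the decoupled Rademacher-chaos MGF bound in terms of the \emph{Frobenius} norm of the random \(k\times k\) submatrix --- precisely the tool you already invoke for \(\Gamma\). Since \(\|M_{J,J'}\|_{\mathrm{F}}\le\sqrt{k}\,\|M\|_{2k,\mathrm{op}}\) while the entrywise scale of \(\tilde\theta\) is \(\vartheta^2\asymp 4n\kappa_1\rho^2/k\), the relevant quantity \(\vartheta^2\|M_{J,J'}\|_{\mathrm{F}}\) is a factor \(\sqrt{k}\) smaller than your worst-case bound, of order \(k\log^{3/2}(p)/\sqrt{p}\cdot k^{-1/2}\cdot k^{1/2}= p^{\alpha-1/2}\log^{3/2}p\to 0\) for all \(\alpha<1/2\). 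This is exactly how the paper's proof (via its Lemma~\ref{Lemma:Chaos} and display~\eqref{Eq:FactorI_tbc}) closes the range.
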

For any fixed $\alpha < 1/2$, Theorem~\ref{Thm:LowerBound} shows that for designs having covariance matrix in $\mathcal{C}(D)$, if the squared signal $\ell_2$ norm is a factor of $56D/\{\underline{\lambda}^2(1-2\alpha-\varepsilon)\}$ smaller than what can be detected by $\psi_{\omega,\tau}^{\mathrm{sparse}}$ shown in Corollary~\ref{Cor:SparseUpper}, then all tests are asymptotically powerless in differentiating the null from the alternative. In other words, in the sparse regime where $k\leq p^{\alpha}$ for $\alpha < 1/2$, the test $\psi_{\omega,\tau}^{\mathrm{sparse}}$ has a minimax optimal detection limit measured in $\|\beta_1-\beta_2\|_2$, up to constants depending on $\alpha, \underline{\lambda}$ and $D$ only.

It is illuminating to relate the above results with the corresponding ones in the one-sample problem in the sparse regime ($\alpha < 1/2$).  Let $X$ be an $n\times p$ matrix with independent $N(0,1)$ entries and $Y=X\beta+\epsilon$ for $\epsilon\mid X \sim N(0, I_n)$, and we consider the one-sample problem to test $H_0: \beta=0$ against $H_1: \beta \in \Theta_{p,k}(\rho)$. Theorem~2 and 4 of \citet{AriasCastroetal2011} state that under the additional assumption that all nonzero entries of $\beta$ have equal absolute values, the detection limit for the one-sample problem is at $\rho \asymp \sqrt{\frac{k\log p}{n}}$, up to constants depending on $\alpha$.
Thus, when $\underline{\lambda}$ and $D$ are constants, Corollary~\ref{Cor:SparseUpper} and Theorem~\ref{Thm:LowerBound} suggest that the two-sample problem with model~\eqref{Eq:Model} has up to multiplicative constants the same detection limit as the one-sample problem with sample size 
\begin{equation}
\label{Eq:EffectiveSampleSize}
n\kappa_1 = \frac{nr}{(1+r)^2(1+s)},
\end{equation}
which unveils how this `effective sample size' depends on the relative proportions between sample sizes $n_1$, $n_2$ and the dimension $p$ of the problem. It is to be expected that the effective sample size is proportional to $m$, which is the number of observations constructed from $X_1$ and $X_2$ in $W$. More intriguingly,~\eqref{Eq:EffectiveSampleSize} also gives a precise characterization of how the effective sample size depends on the imbalance between the number of observations in $X_1$ and $X_2$. For a fixed $n=n_1+n_2$, $n\kappa_1$ is maximized when $n_1=n_2$ and converges to $n_1m/n$ (or $n_2m/n$) if $n_1/n\to 0$ (or $n_2/n\to 0$).

%

\subsection{Dense case}

We now turn our attention to our second test, $\psi^{\mathrm{dense}}_{\eta}$. The following theorem states a sufficient signal $\ell_2$ norm size for which $\psi^{\mathrm{dense}}_{\eta}$ is asymptotically powerful in distinguishing the null from the alternative.

\begin{thm}
\label{Thm:AlternativeDense}
Under Conditions~\ref{cond:design}, \ref{cond:asymp} and \ref{cond:sigma-strong}, we let $\eta=\hat{\sigma}^2\bigl(m+2\sqrt{(2+\varepsilon) m\log p}+2(1+\varepsilon)\log p\bigr)$ for any $\varepsilon \in (0,5)$.
We further assume $k\in[p]$, $\rho^2\geq \frac{2\sigma^2\sqrt{m\log p}}{n\kappa_1\underline{\lambda}}$ and that \eqref{Eq:klogpovern-1} is satisfied.
\begin{enumerate}[label={\textup{(\alph*)}}]
\item If $\beta_1=\beta_2$, then $\psi^{\mathrm{dense}}_\eta(X_1,X_2,Y_1,Y_2)\xrightarrow{\mathrm{a.s.}} 0$.
\item If $\theta = (\beta_1-\beta_2)/2 \in \Theta_{p,k}(\rho)$, then $\psi^{\mathrm{dense}}_\eta(X_1,X_2,Y_1,Y_2)\xrightarrow{\mathrm{a.s.}} 1$.
\end{enumerate}
Consequently, $\mathcal{M}_X(k,\rho)\xrightarrow{\mathrm{a.s.}} 0$.
\end{thm}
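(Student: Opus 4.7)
The plan is to handle parts (a) and (b) separately, with the final consequence following immediately from the union of the two conclusions. Part (a) is a direct chi-square tail bound, and for part (b) I would decompose $\|Z\|_2^2$ into a signal term, a cross term, and a noise term, and bound each.

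For part (a), under the null we have $\theta=0$, so $Z=\xi$ and conditionally on $X_1,X_2$, $\|Z\|_2^2/\sigma^2\sim \chi^2_m$. Applying the Laurent--Massart bound $\mathbb{P}(\chi^2_m\geq m+2\sqrt{mt}+2t)\leq e^{-t}$ with $t=(1+\varepsilon)\log p$ yields a summable $p^{-(1+\varepsilon)}$ tail. Under Condition~\ref{cond:sigma-strong}, $|\hat\sigma^2/\sigma^2-1|\cdot m = o(\sqrt{m\log p})$ almost surely because $m\asymp p$, which is absorbed by the gap between $2\sqrt{(2+\varepsilon)m\log p}$ (present in $\eta$) and $2\sqrt{(1+\varepsilon)m\log p}$ (present in the Laurent--Massart quantile). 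A Borel--Cantelli argument over $p$ then gives $\psi^{\mathrm{dense}}_\eta\xrightarrow{\mathrm{a.s.}} 0$.

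For part (b), expand $\|Z\|_2^2 = \|W\theta\|_2^2 + 2\theta^\top W^\top\xi + \|\xi\|_2^2$. The signal term is controlled via Proposition~\ref{Prop:WtW-general}: setting $v = \{\diag(W^\top W)\}^{1/2}\theta$, the vector $v$ shares the $k$-sparse support of $\theta$, so the $k$-sparse operator norm bound on $\tilde W^\top\tilde W - \Sigma$ gives $v^\top \tilde W^\top\tilde W v \geq (\underline{\lambda}-o(1))\|v\|_2^2$, while the diagonal estimate yields $\|v\|_2^2 \geq 4n\kappa_1(1-o(1))\|\theta\|_2^2$. Combining, $\|W\theta\|_2^2 \geq 4n\kappa_1\underline{\lambda}\rho^2(1-o(1)) \geq 8\sigma^2\sqrt{m\log p}(1-o(1))$ almost surely. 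The cross term, conditional on $W$, is $N(0,4\sigma^2\|W\theta\|_2^2)$, so a Gaussian tail bound with exponent $(2+\delta)$ is summable and gives $|2\theta^\top W^\top\xi|\leq 2\sigma\|W\theta\|_2\sqrt{(2+\delta)\log p}$ eventually almost surely; since $\|W\theta\|_2^2$ grows at rate $\sqrt{m\log p}$ while this bound grows only as $(m\log p)^{1/4}(\log p)^{1/2}$, it is of strictly smaller order than $\|W\theta\|_2^2$. Finally, the Laurent--Massart lower tail gives $\|\xi\|_2^2 \geq \sigma^2 m - 2\sigma^2\sqrt{(1+\delta)m\log p}$ eventually almost surely.

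Putting these ingredients together, and using $|\hat\sigma^2-\sigma^2|m = o(\sigma^2\sqrt{m\log p})$ under~\ref{cond:sigma-strong} to cancel the leading $m$ terms between $\|\xi\|_2^2$ and $\eta$, the comparison reduces to showing
\[
\|Z\|_2^2 - \eta \;\geq\; \sigma^2\sqrt{m\log p}\,\bigl\{8 - 2\sqrt{2+\varepsilon} - 2\sqrt{1+\delta} - o(1)\bigr\} - O(\sigma^2\log p),
\]
which is positive eventually whenever $\sqrt{2+\varepsilon} < 4 - \sqrt{1+\delta}$. Since $\varepsilon<5$ gives $\sqrt{2+\varepsilon}<\sqrt{7}<3$, one can choose $\delta>0$ sufficiently small to make the bracket strictly positive, yielding $\psi^{\mathrm{dense}}_\eta\xrightarrow{\mathrm{a.s.}} 1$. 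The minimax consequence $\mathcal{M}_X(k,\rho)\xrightarrow{\mathrm{a.s.}} 0$ follows immediately. The main obstacle is the careful coordination of several concentration inequalities under Borel--Cantelli to obtain almost sure (rather than in-probability) conclusions, together with the delicate handling of the quadratic form $\theta^\top W^\top W \theta$ via the $k$-sparse operator norm control of Proposition~\ref{Prop:WtW-general}, which is essential because a full-spectrum control of $W^\top W$ (with its complex Beta-type dependence) is not available; the stronger variance assumption~\ref{cond:sigma-strong} is required precisely to absorb $(\hat\sigma^2-\sigma^2)m$ into the $\sqrt{m\log p}$ slack in $\eta$.
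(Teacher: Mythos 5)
Your proposal is correct, and part~(a) — the central $\chi^2_m$ tail bound of \citet{LaurentMassart2000}, the absorption of $(\hat\sigma^2-\sigma^2)m$ into the slack of $\eta$ under~\ref{cond:sigma-strong}, and the Borel--Cantelli argument — coincides with the paper's argument. For part~(b) you take a genuinely, if mildly, different route: the paper observes that $\|Z\|_2^2/\sigma^2\sim\chi^2_m(\|W\theta\|_2^2)$ and applies the noncentral lower-tail inequality of \citet[Lemma~8.1]{Birge2001} in one stroke, whereas you inline that inequality by decomposing $\|Z\|_2^2=\|W\theta\|_2^2+2\theta^\top W^\top\xi+\|\xi\|_2^2$ and controlling the three terms separately with Proposition~\ref{Prop:WtW-general}, a Gaussian tail, and the Laurent--Massart lower tail. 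Both arguments rest on the same key estimate $\|W\theta\|_2^2\geq(8-\smallO(1))\sigma^2\sqrt{m\log p}$ obtained from~\eqref{Eq:WtWdiag-1} and~\eqref{Eq:kopNorm-1}. Your decoupling of the noise deviation from the cross-term deviation actually buys a slightly better constant: your condition $\sqrt{2+\varepsilon}<4-\sqrt{1+\delta}$ holds for all $\varepsilon<7$ after taking $\delta$ small, whereas Birg\'e's packaged bound couples the two deviations into $2\sqrt{(2m+4\|W\theta\|_2^2)\log p}$ and requires $2\sqrt{2+\varepsilon}<8-2\sqrt{2}$, i.e.\ $\varepsilon<16-8\sqrt{2}\approx 4.69$ — so your route comfortably covers the stated range $\varepsilon\in(0,5)$ where the paper's constant is in fact marginally too tight at the upper end. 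One small imprecision to note in both arguments: the alternative only imposes $\|\theta\|_2\geq\rho$, so the phrase ``$\|W\theta\|_2^2$ grows at rate $\sqrt{m\log p}$'' (and the paper's claim $\|W\theta\|_2^2=\smallO(m)$) should be read as treating the boundary case; for larger $\|\theta\|_2$ your order comparison $2\sigma\|W\theta\|_2\sqrt{\log p}=\smallO(\|W\theta\|_2^2)$ still holds since $\|W\theta\|_2\gg\sqrt{\log p}$, so the conclusion is unaffected.
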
 

Theorem~\ref{Thm:AlternativeDense} indicates that the sufficient signal $\ell_2$ norm for asymptotic powerful testing via $\psi^{\mathrm{dense}}_{\eta}$ does not depend upon the sparsity level. While the above result is valid for all $k\in[p]$ such that~\eqref{Eq:klogpovern-1} holds, it is more interesting in the dense regime where $k\geq p^{1/2}$.
More precisely, by comparing Theorems~\ref{Thm:AlternativeDense} and~\ref{Cor:SparseUpper}, we see that if $k^2\log p > m$ and $k\log(ep/k)\leq n/(2C_{s,r})$, the test $\psi^{\mathrm{dense}}_{\eta}$ has a smaller provable detection limit than $\psi^{\mathrm{sparse}}_{\omega,\tau}$. In our asymptotic regime \ref{cond:asymp}, $m\asymp n \asymp p$, so $\frac{2\sqrt{m\log p}}{n\kappa_1}$ is, up to constants depending on $s$ and $r$, of order $p^{-1/2}\log^{1/2}p$. 
The following theorem points out that when $\Sigma\in\mathcal{C}(D)$ for some constant $D$, the detection limit of $\psi^{\mathrm{dense}}_{\eta}$ is minimax optimal up to poly-logarithmic factors in the dense regime.
\begin{thm}
\label{Thm:LowerBoundDense}
Under conditions~\ref{cond:design} and~\ref{cond:asymp}, if we further assume $\Sigma \in\mathcal{C}(D)$ for some $D>0$, $p^{1/2}\leq k \leq p^{\alpha}$ for some $\alpha\in[1/2,1)$ and $\rho^2=\smallO(p^{-1/2}\min\{\log^{-1/2} (ep/k), D^{-3/2}\} )$, then $\mathcal{M}_X(k,\rho) \xrightarrow{\mathrm{a.s.}} 1$.
\end{thm}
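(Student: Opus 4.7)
The plan is a Bayesian minimax lower bound via chi-squared divergence, conditional on the design. For any prior $\pi$ supported on pairs $(\beta_1, \beta_2)$ with $(\beta_1-\beta_2)/2 \in \Theta_{p,k}(\rho)$,
\[
\mathcal{M}_X(k,\rho) \;\geq\; 1 - \mathrm{TV}\bigl(P_0^X,\, \overline{P}_\pi^X\bigr) \;\geq\; 1 - \tfrac{1}{2}\sqrt{\chi^2\bigl(\overline{P}_\pi^X,\, P_0^X\bigr)},
\]
where $P_0^X$ is the conditional null law of $(Y_1, Y_2)$ given $X$ and $\overline{P}_\pi^X$ is the $\pi$-mixture of conditional alternatives. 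It therefore suffices to exhibit a $\pi$ for which $\chi^2(\overline{P}_\pi^X, P_0^X) \xrightarrow{\mathrm{a.s.}} 0$.

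I would take $\pi$ to set $\beta_1 = \theta$, $\beta_2 = -\theta$ with $\theta = (\rho/\sqrt{k})\, v$, where $v_j = \epsilon_j \mathbbm{1}_{\{j \in S\}}$, $S$ is drawn uniformly over $k$-subsets of $[p]$, and $(\epsilon_j)_{j\in S}$ are independent Rademacher signs independent of $S$, so that $\theta \in \Theta_{p,k}(\rho)$. Using the sketched representation~\eqref{Eq:ZW} and the fact that $Z \mid X \sim N_m(W\theta, \sigma^2 I_m)$, a standard Gaussian MGF computation gives
\[
\chi^2\bigl(\overline{P}_\pi^X,\, P_0^X\bigr) + 1 \;=\; E_{\theta, \theta' \sim \pi \otimes \pi}\bigl[\exp\bigl(\sigma^{-2}\theta^\top W^\top W \theta'\bigr)\bigr].
\]

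The argument then proceeds in two layers. First, $W^\top W$ must be replaced by $4n\kappa_1 \Sigma$; Proposition~\ref{Prop:WtW-general} gives such control in a $k$-sparse operator-norm sense, but this is too coarse to be plugged directly into the exponent in the dense regime, where $4n\kappa_1\rho^2/(\sigma^2 k)$ can be of order a constant or larger. I would instead condition on the support pair $(S, S')$ and exploit the matrix-Beta-type distribution of the low-dimensional submatrix $(W^\top W)_{S \cup S',\, S \cup S'}$ to obtain sharper entrywise (or Frobenius-norm) concentration around $4n\kappa_1 \Sigma_{S \cup S',\, S \cup S'}$. Second, after the approximation, integrating out $\epsilon'$ via Rademacher sub-Gaussianity and then applying a Hanson--Wright-style bound for the resulting Rademacher quadratic form in $\epsilon_S$ reduces the task to bounding the Frobenius and operator norms of the random-support submatrix $\Sigma_{S, S'}$. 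At this stage the $\mathcal{C}(D)$-decomposition $\Sigma = \Sigma_0 + \Gamma$ from~\eqref{Eq:CD} is decisive: intersection counting over random $(S, S')$ yields $E\|(\Sigma_0)_{S, S'}\|_{\mathrm{F}}^2 \lesssim k^2 D/p$, while $\|\Gamma\|_{\max} \leq D/(k\log^2 p)$ dispatches the residual.

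The main obstacle is coordinating the three contributions to the final rate: the $p^{-1/2}$ factor from the second-moment bound on $\|\Sigma_{S, S'}\|_{\mathrm{F}}^2$, the $\log^{-1/2}(ep/k)$ factor arising from the higher-moment Rademacher tail in the Hanson--Wright estimate, and the $D^{-3/2}$ penalty from operator-norm control on $\Sigma_{S, S'}\Sigma_{S', S}$, whose spectral radius can be inflated polynomially in $D$ under the row-sparsity of $\Sigma_0$. Interleaving these bounds with the finer matrix-Beta approximation of $W^\top W$, without incurring loss in any of the three factors, is the technical heart of the proof.
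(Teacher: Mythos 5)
There is a genuine gap, and it sits at the very first step: your choice of prior and the claimed $\chi^2$ identity. You set $\beta_1=\theta$, $\beta_2=-\theta$, i.e.\ the nuisance $\gamma=(\beta_1+\beta_2)/2=0$, and then assert that the chi-squared divergence between the conditional laws of $(Y_1,Y_2)$ equals $\mathbb{E}\bigl[\exp(\sigma^{-2}\theta^\top W^\top W\theta')\bigr]$, ``using the sketched representation''. That expression is the divergence of the sketched data $Z=A^\top Y$ only. Since $Z$ is a non-invertible function of $Y$, data processing gives $\chi^2(Z)\leq \chi^2(Y_1,Y_2)$, which is the wrong direction: showing the sketched divergence vanishes does not prevent a test with access to the full data from detecting the signal. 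Concretely, with $\gamma=0$ the conditional mean of $Y=(Y_1^\top,Y_2^\top)^\top$ is $\tilde X\theta$ with $\tilde X=(X_1^\top,-X_2^\top)^\top$, which has a nonzero component $P_X\tilde X\theta$ in the column space of $X$, and the correct full-data identity is $\chi^2+1=\iint \exp(\sigma^{-2}\theta^\top X^\top X\,\theta')\,d\pi(\theta)\,d\pi(\theta')$, not the $W^\top W$ version. The idea you are missing — and the structural heart of the paper's lower bound — is to choose the nuisance adversarially, $\gamma=L\theta$ with $L=(X^\top X)^{-1}(X_2^\top X_2-X_1^\top X_1)$. Since $P_X\tilde X=-XL$, this choice cancels the signal's component in the column space of $X$ exactly, so that all information about $\theta$ lives in $Z$ and, via Lemma~\ref{Lemma:MessingAround}, the \emph{full-data} second moment becomes $\iint e^{\theta^\top W^\top W\theta'}\,d\pi\,d\pi'$ as in~\eqref{Eq:LambdaSqr}. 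As written, your reduction is invalid; for this rate-level theorem you could in principle salvage it by bounding the $X^\top X$ exponent directly (it concentrates around $n\Sigma$ just as $W^\top W$ concentrates around $4n\kappa_1\Sigma$), but that is not what you argue.

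Two secondary points. First, your worry that Proposition~\ref{Prop:WtW-general} is ``too coarse'' in the dense regime, necessitating a finer matrix-Beta analysis of the submatrices $(W^\top W)_{S\cup S',S\cup S'}$, is unfounded: here $\vartheta^2=(4+\smallO(1))n\kappa_1\rho^2/k=\smallO\bigl(n/(p^{1/2}k)\bigr)=\smallO(1)$ since $k\geq p^{1/2}$ and $n\asymp p$, and the bound~\eqref{Eq:FactorI_tbc} built on the $k$-sparse operator norm control~\eqref{Eq:kopNorm-1} already gives $\vartheta^2\|(\tilde W^\top\tilde W-\Sigma_0)_{J,J'}\|_{\mathrm{F}}\lesssim \rho^2\sqrt{n\log(ep/k)}+\smallO(1)=\smallO(1)$ under the stated condition on $\rho^2$; this is exactly how the paper proceeds, reusing the proof of Theorem~\ref{Thm:LowerBound} up to~\eqref{Eq:FactorI_tbc}. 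Second, your accounting of the three factors is slightly off: the $\log^{-1/2}(ep/k)$ requirement comes from the concentration of $\tilde W^\top\tilde W$ around $\Sigma$ fed into the Rademacher-chaos MGF bound (Lemma~\ref{Lemma:Chaos}), not from higher Rademacher moments, and the $D^{-3/2}$ requirement arises from needing $D^3k^2\vartheta^4/p=\smallO(1)$ in the $\Sigma_0$ factor controlled by the hypergeometric argument of Lemma~\ref{Lemma:Chaos2}. The remainder of your plan (the $\Sigma=\Sigma_0+\Gamma$ split, decoupled chaos bounds, intersection counting over random supports) does match the paper's route.
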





\section{Numerical studies}
\label{sec:simu}
In this section, we study the finite sample performance of our proposed procedures via numerical experiments. Unless otherwise stated, the data generating mechanism for all simulations in this section is as follows.
We first generate design matrices $X_1$ and $X_2$ with independent $N(0,1)$ entries. Then, for a given sparsity level $k$ and a signal strength $\rho$, set $\Delta = (\Delta_j)_{j\in[p]}$ so that $(\Delta_1,\ldots,\Delta_k)^\top \sim \rho\mathrm{Unif}(\mathcal{S}^{k-1})$ and $\Delta_j=0$ for $j > k$. We then draw $\beta_1\sim N_p(0,I_p)$ and define $\beta_2:=\beta_1+\Delta$. Finally, we generate $Y_1$ and $Y_2$ as in~\eqref{Eq:Model}, with $\epsilon_1\sim N_{n_1}(0,I_{n_1})$ and $\epsilon_2\sim N_{n_2}(0, I_{n_2})$ independent of each other.

In Section~\ref{Sec:ValidateTheory}, we supply the oracle value of $\hat\sigma^2=1$ to our procedures to check whether their finite sample performance is in accordance with our theory.
In all subsequent subsections where we compare our methods against other procedures, we estimate the noise variance $\sigma^2$ with the method-of-moments estimator proposed by \citet{Dicker2014}.
We implement our estimators $\psi^{\mathrm{sparse}}_{\omega,\tau}$ and $\psi^{\mathrm{dense}}_{\eta}$ on standardized data $X_1/\hat\sigma$, $X_2/\hat\sigma$, $Y_1/\hat\sigma$ and $Y_2/\hat\sigma$ with the tuning parameters $\omega=2\hat\sigma \sqrt{\log p}$, $\tau = \hat\sigma^2\log p$ and $\eta=\hat\sigma^2(m+\sqrt{8m\log p}+4\log p)$ as suggested by Theorems~\ref{Thm:Null},  \ref{Thm:Alternative} and~\ref{Thm:AlternativeDense}.

\subsection{Effective sample size in two-sample testing}
\label{Sec:ValidateTheory}
We first investigate how the empirical power of our test $\mathrm{\psi}_{\lambda,\tau}^{\mathrm{sparse}}$ relies on various problem parameters.
In light of our results in Theorems~\ref{Thm:Null} and~\ref{Thm:Alternative}, we define
\begin{equation}
\label{Eq:nu}
\nu: = \frac{rn\rho^2}{\sigma^2(1+s)(1+r)^2k\log p},
\end{equation}
where $s := p/m$ and $r:=n_1/n_2$. Note that in the asymptotic regime (C2), we have $\nu \to n\kappa_1\rho^2 / (\sigma^2k\log p)$. As discussed after Theorem~\ref{Thm:Alternative}, $rn/\{(1+s)(1+r)^2\}$ in the definition of $\nu$ is asymptotically $n\kappa_1$ and can be viewed as the effective sample size in the testing problem.
In Figure~\ref{Fig:PhaseTransition}, we plot the estimated test power of $\psi_{\lambda,\tau}^{\mathrm{sparse}}$ against $\nu$ over 100 Monte Carlo repetitions for $n=1000$, $k=10$, $\rho\in\{0,0.2,\ldots,2\}$ and various values of $p$ and $n_1$.
In the left panel of Figure~\ref{Fig:PhaseTransition},  $p$ ranges from $100$ to $900$, which corresponds to $s$ from $1/9$ to $9$. As for the right panel, we vary $n_1$ from $100$ to $900$, which corresponds with an $r$ varying between $1/9$ and $9$.
In both panels, the power curves for different $s$ and $r$ values overlap each other, with the phase transition all occurring at around $\nu\approx 1.5$. This conforms well with the effective sample size and the detection limit articulated in our theory.

\begin{figure}[htbp]
\centering
\begin{tabular}{cc}
\includegraphics[width=0.48\textwidth]{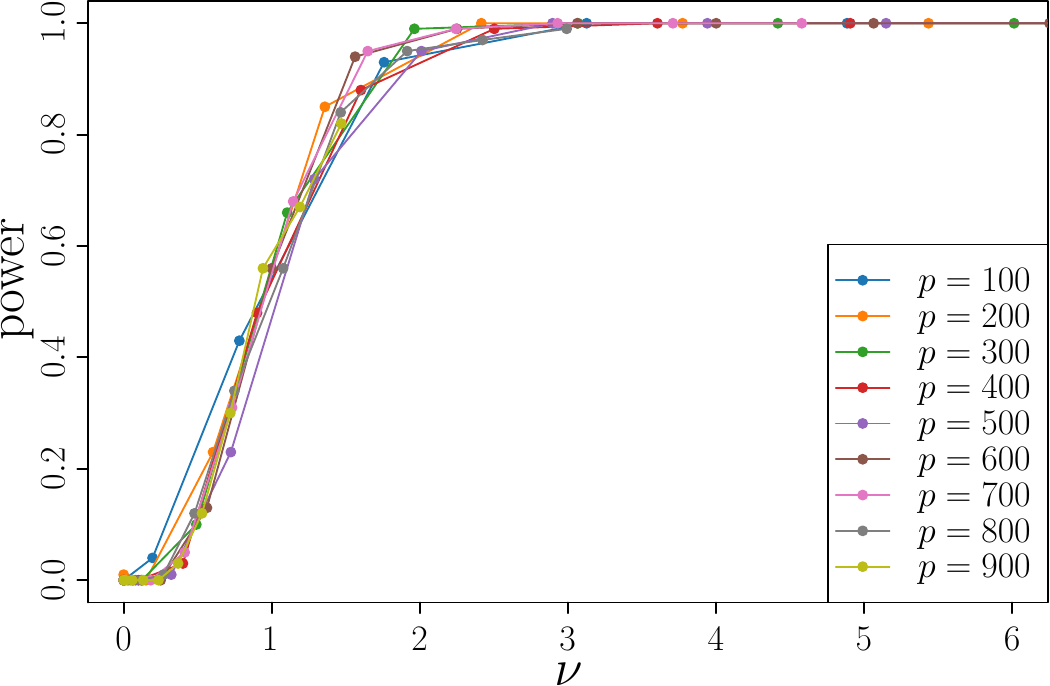} &
\includegraphics[width=0.48\textwidth]{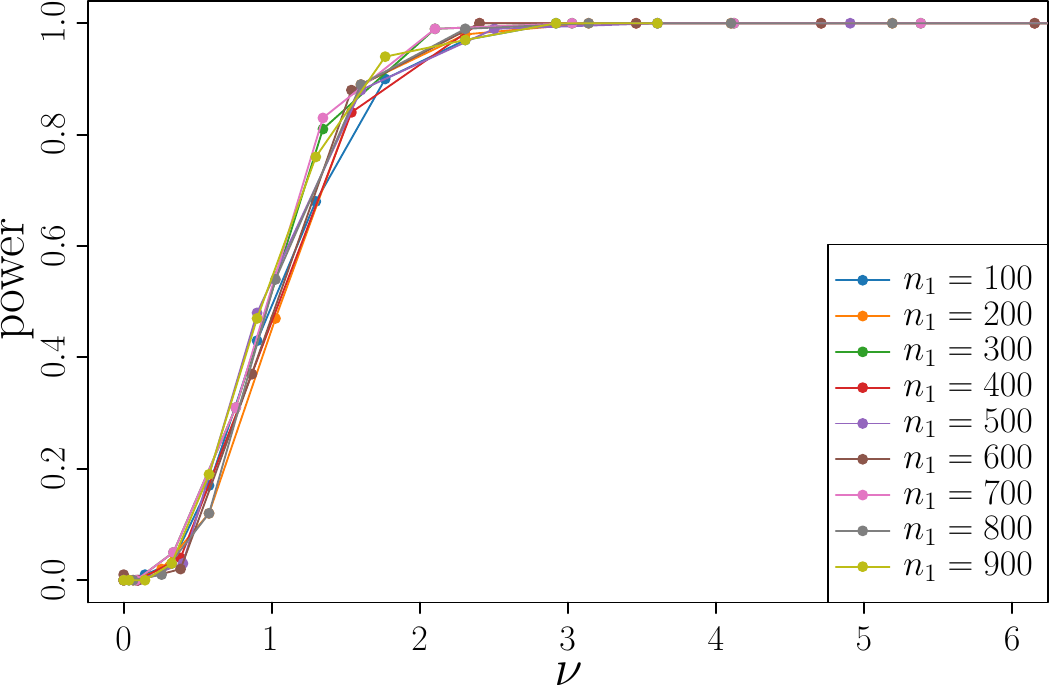}
\end{tabular}
\caption{\label{Fig:PhaseTransition}Power function of $\psi_{\lambda,\tau}^{\mathrm{sparse}}$, estimated over 100 Monte Carlo repetitions, plotted against $\nu$, as defined in~\eqref{Eq:nu}, in various parameter settings. Left panel: $n_1=n_2=500$, $p\in\{100,200,\ldots,900\}$, $k=10$, $\rho\in\{0,0.2,\ldots,2\}$. Right panel: $n_1\in\{100,200,\ldots,900\}$, $n_2=1000-n_1$, $p=400$, $k=10$, $\rho\in\{0,0.2,\ldots,2\}$.}
\end{figure}

\subsection{Comparison with other methods}
\label{Sec:Comparison}
Next, we compare the performance of our procedures against competitors in the existing literature. The only methods we were aware of that could allow for dense regression coefficients $\beta_1$ and $\beta_2$ were those proposed by \citet{ZhuBradic2016} and \citet{Chabonnier2015}. In addition, we also include in our comparisons the classical likelihood ratio test, denoted by $\psi^{\mathrm{LRT}}$, which rejects the null when the $F$-statistic defined in~\eqref{Eq:LRT} exceeds the upper $\alpha$-quantile of an $F_{p,\, n-2p}$ distribution. Note that the likelihood ratio test is only well-defined if $p<\min\{n_1,n_2\}$. The test proposed by \citet{ZhuBradic2016}, which we denote by $\psi^{\mathrm{ZB}}$, requires that $n_1=n_2$ (when the two samples do not have equal sample size, a subset of the larger sample would be discarded for the test to apply). Specifically, writing $X_+:=X_1+X_2$, $X_-:=X_1-X_2$ and $Y_+:=Y_1+Y_2$, $\psi^{\mathrm{ZB}}$ first estimates $\gamma=(\beta_1+\beta_2)/2$ and
\[
\Pi:=\{\mathbb{E}(X_+^\top X_+)\}^{-1}\mathbb{E}(X_+^\top X_-)
\]
by solving Dantzig-Selector-type optimization problems. Then based on the obtained estimators $\hat\gamma$ and $\hat\Pi$,  $\psi^{\mathrm{ZB}}$ proceeds to compute a test statistic
\[
T_{\mathrm{ZB}} := \frac{\|\{X_- - X_+\hat\Pi\}^\top \{Y_+ - X_+\hat\gamma\}\|_\infty}{\|Y_+ - X_+\hat\gamma\|_2}.
\]
\begin{figure}[htbp]
\centering
\begin{tabular}{c} \hspace{0.1\textwidth}\includegraphics[width=0.7\textwidth]{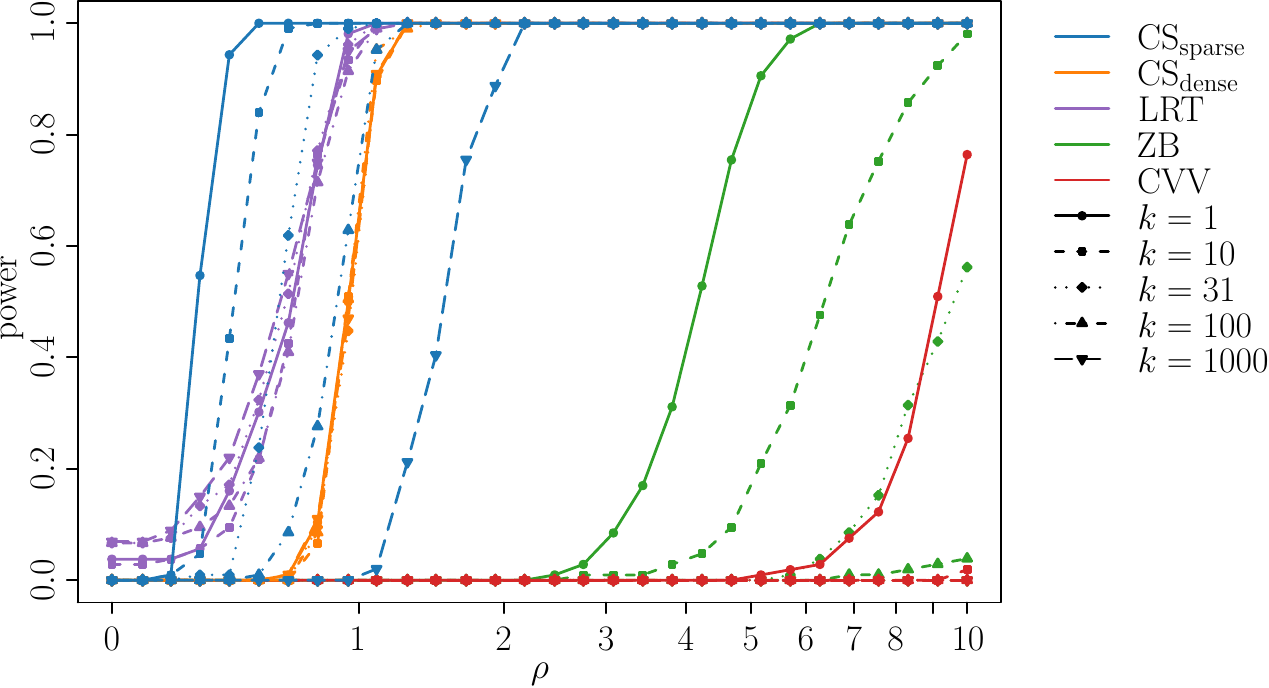}
\\
\\
\hspace{0.1\textwidth}\includegraphics[width=0.7\textwidth]{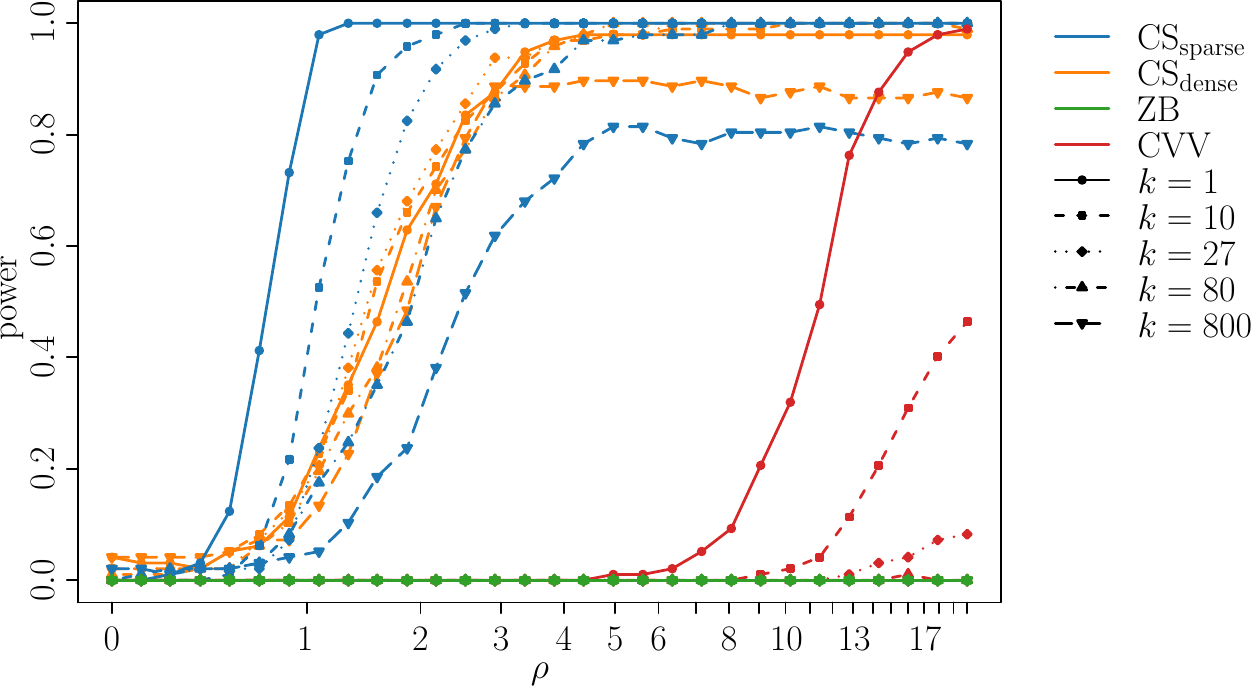} \end{tabular}
\caption{\label{Fig:comparison}Power comparison of different methods at different sparsity levels $k\in\{1,10,\lfloor p^{1/2}\rfloor,0.1p, p\}$ and different signal $\ell_2$ norm $\rho$ on a logarithmic grid (noise variance $\sigma^2=1$). Top panel: $n_1=n_2=1200$, $p=1000$, $\rho\in[0,10]$; bottom panel: $n_1=n_2=500$, $p=800$, $\rho \in [0,20]$.}
\end{figure}
Their test rejects the null if the test statistic exceeds an empirical upper-$\alpha$-quantile (obtained via Monte-Carlo simulation) of $\|\xi\|_\infty$ for $\xi\sim N(0, \{X_- - X_+\hat\Pi\}^\top \{X_- - X_+\hat\Pi\})$. As the estimation of $\Pi$ involves solving a sequence of $p$ Dantzig Selector problems, which is often time-consuming, we have implemented $\psi^{\mathrm{ZB}}$ with the oracle choice of $\hat\Pi = \Pi$, which is equal to $I_p$ when covariates in the two design matrices $X_1$ and $X_2$ follow independent centred distribution with the same covariance matrix. The test proposed by \citet{Chabonnier2015}, denoted here by $\psi^{\mathrm{CVV}}$, first performs a LARS regression \citep{Efronetal2004} of concatenated response $Y = (Y_1^\top, Y_2^\top)^\top$ against the block design matrix
\[
\begin{pmatrix}
X_1 & X_1\\ X_2 & -X_2
\end{pmatrix}
\]
to obtain a sequence of regression coefficients $\hat b = (\hat b_1, \hat b_2)\in\mathbb{R}^{p+p}$. Then for every $\hat b$ on the LARS solution path with $\|\hat b\|_0\leq \min\{n_1,n_2\}/2$, they restrict the original testing problem into the subset of coordinates where either $\hat b_1$ or $\hat b_2$ is non-zero, and form test statistics based on the Kullback--Leibler divergence between the two samples restricted to these coordinates. The sequence of test statistics is then compared with Bonferonni-corrected thresholds at size $\alpha$. For both the $\psi^{\mathrm{LRT}}$ and $\psi^{\mathrm{CVV}}$, we set $\alpha=0.05$.

Figure~\ref{Fig:comparison} compares the estimated power, as a function of $\|\beta_1-\beta_2\|_2$, of $\psi^{\mathrm{sparse}}_{\omega,\tau}$ and $\psi_{\eta}^{\mathrm{dense}}$ against that of $\psi^{\mathrm{LRT}}$, $\psi^{\mathrm{ZB}}$ and $\psi^{\mathrm{CVV}}$. We ran all methods on the same 100 datasets for each set of parameters. We performed numerical experiments in two high-dimensional settings with different sample-size-to-dimension ratio: $p=1000$, $n_1=n_2=1200$ in the left panel and $p=800$, $n_1=n_2=500$ in the right panel. Here, we took $n_1 = n_2$ to maximize the power of $\psi^{\mathrm{ZB}}$. Also, since the likelihood ratio test requires $p<\min\{n_1,n_2\}$, it is only implemented in the left panel. For each experiment, we varied $k$ in the set $\{1,10,\lfloor p^{1/2}\rfloor, 0.1p, p\}$ to examine different sparsity levels.

We see in Figure~\ref{Fig:comparison} that both $\psi_{\lambda,\tau}^{\mathrm{sparse}}$ and $\psi_{\eta}^{\mathrm{dense}}$ showed promising finite sample performance. Both our tests did not produce any false positives under the null when $\rho=0$, and showed better power compared to $\psi^{\mathrm{ZB}}$ and $\psi^{\mathrm{CVV}}$. In the more challenging setting of the right panel with $p>\max\{n_1,n_2\}$, it takes a signal $\ell_2$ norm more than 10 times smaller than that of the competitors for our test $\psi^{\mathrm{sparse}}_{\omega,\tau}$ to reach power of almost $1$ in the sparsest case.
Note though, in the densest case on the right panel ($k=800$), $\psi^{\mathrm{sparse}}_{\omega,\tau}$ and $\psi^{\mathrm{dense}}_{\eta}$ did not have saturated power curves, because noise variance is over-estimated by $\hat\sigma^2$ in this setting.

We also observe that the power of $\psi^{\mathrm{sparse}}_{\omega,\tau}$ has a stronger dependence on the level $k$ than that of $\psi^{\mathrm{dense}}_{\eta}$. For $k\leq \sqrt{p}$, $\psi^{\mathrm{sparse}}_{\omega,\tau}$ appears much more sensitive to the signal size. As $k$ increases, $\psi_{\eta}^{\mathrm{dense}}$ eventually outperforms $\psi_{\lambda,\tau}^{\mathrm{sparse}}$, which is consistent with our observed phase transition behaviour as discussed after Theorem~\ref{Thm:AlternativeDense}. It is interesting to note that when the likelihood ratio test is well-defined (left panel), it has better power than $\psi^{\mathrm{dense}}_{\eta}$. This is partly due to the fact that the theoretical choice of threshold $\eta$ is relatively conservative to ensure asymptotic size of the test is 0 almost surely. In comparison, the rejecting threshold for the likelihood ratio test is chosen to have ($p$ fixed and $n\to\infty$) asymptotic size of $\alpha=0.05$, and the empirical size is sometimes observed to be larger than $0.08$.

As remarked at the beginning of Section~\ref{Sec:OurContribution}, the complementary sketching transforming can potentially be combined with other one-sample global testing procedure to obtain a two-sample test. Figure~\ref{Fig:carp} illustrates this by comparing our methods with a two-stage procedure combining the complementary sketching transformation with the one-sample test proposed in \citet{carpentier2019minimax}, which we call $\psi^{\mathrm{CCCTW}}$. We remark that the test in \citet{carpentier2019minimax} requires the knowledge of the sparsity $k$ and involves an unspecified parameter $C_*$. In our experience, the optimal choice of $C_*$ seems to vary with different sparsity levels. As such, we have implemented $\psi^{\mathrm{CCCTW}}$ by choosing $C_*$ in each simulation setting to maximize the power subject to a size constraint of $\alpha=0.05$ (specifically, for $k=1,10,31,100,1000$, we have chosen $C_* = 0.30, 0.67, 1.56, 3.37,2.84$ respectively). We note that even granting $\psi^{\mathrm{CCCTW}}$ access to the additional sparsity parameter and this strong oracle parameter choice, $\psi^{\mathrm{sparse}}_{\omega,\tau}$ and $\psi^{\mathrm{dense}}_{\eta}$ are still competitive and in most cases outperforming $\psi^{\mathrm{CCCTW}}$ in the sparse and dense regimes respectively.  
We attribute this difference in performance to the fact that the matrix $W$ after complementary sketching transformation does not satisfy typical design conditions (such as independent rows) assumed in most one-sample testing literature. 
As a result, two-stage methods such as the $\psi^{\mathrm{CCCTW}}$ test may suffer from power loss due to model misspecification.

\begin{figure}[htbp]
\centering
\includegraphics[width=0.6\textwidth]{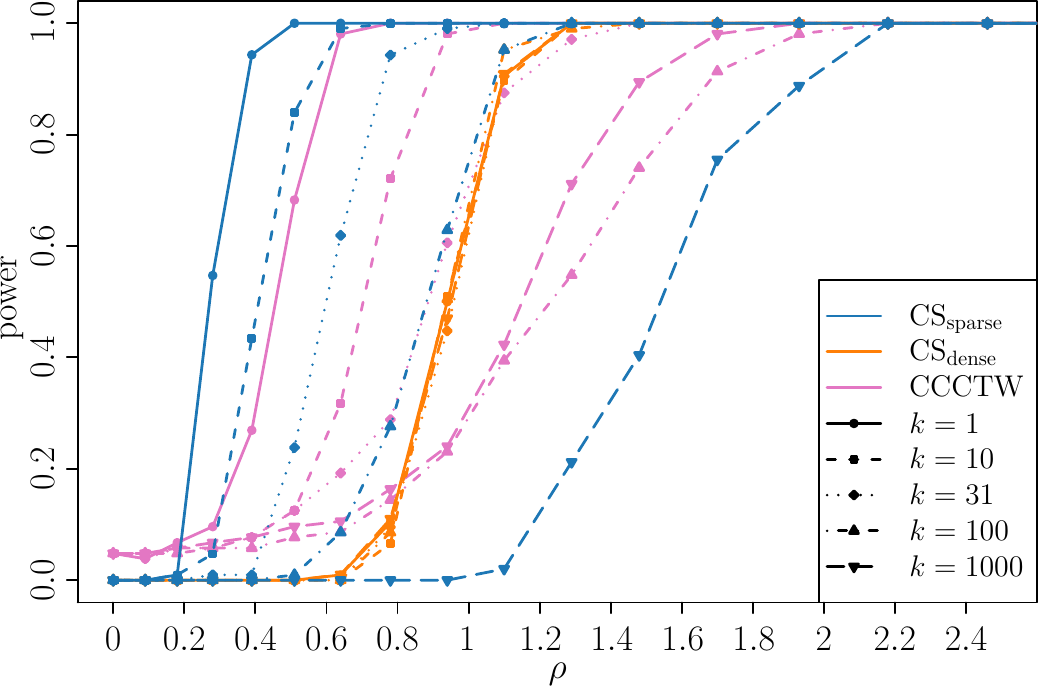}
\caption{\label{Fig:carp}Power comparison of methods constructed by using different one-sample testing procedures after complementary sketching transformation. Parameters: $n_1=n_2=1200$, $p=1000$, $\rho\in[0,2.5]$, $k\in\{1,10,31,100,1000\}$.}
\end{figure}

\subsection{More general data generating mechanisms}
\label{Sec:Misspecification}
We have thus far focused on the case of Gaussian random design $X_1, X_2$ with identity covariance and Gaussian regression noises $\epsilon_1,\epsilon_2$.  However, as our proposed testing procedures can still be used under more general data generating mechanisms. We consider the following four setups:
\begin{enumerate}[label={(\alph*)}]
\item Correlated design: assume rows of $X_1$ and $X_2$ are independently drawn from $N(0, \Sigma)$ with $\Sigma = (2^{-|j_1-j_2|})_{j_1,j_2\in[p]}$.
\item Rademacher design: assume entries of $X_1$ and $X_2$ are independent Rademacher random variables.
\item One way balanced ANOVA design: assume $d_1:=n_1/p$ and $d_2:=n_2/p$ are integers and $X_1$ and $X_2$ are block diagonal matrices
\[
X_1=\begin{pmatrix} \mathbf{1}_{d_1} & & \\ & \ddots & \\ & & \mathbf{1}_{d_1}\end{pmatrix} \qquad
X_2=\begin{pmatrix} \mathbf{1}_{d_2} & & \\ & \ddots & \\ & & \mathbf{1}_{d_2}\end{pmatrix},
\]
where $\mathbf{1}_d$ is an all-one vector in $\mathbb{R}^d$.
\item Heavy tailed noise: we generate both $\epsilon_1$ and $\epsilon_2$ with independent $t_4/\sqrt{2}$ entries. Note that the $\sqrt{2}$ denominator standardizes the noise to have unit variance, to ensure easier comparison between settings.
\end{enumerate}
In setups (a) to (c), we keep $\epsilon_1\sim N_{n_1}(0,I_{n_1})$ and $\epsilon_2\sim N_{n_2}(0,I_{n_2})$ and in setup (d), we keep $X_1$ and $X_2$ to have independent $N(0,1)$ entries. Note that in setup (a) the covariance matrix belongs to $\mathcal{C}(D)$ with $D\asymp \log k + \log\log^2 p$. Figure~\ref{Fig:misspecification} compares the performance of $\psi_{\lambda,\tau}^{\mathrm{sparse}}$, $\psi^{\mathrm{dense}}_{\eta}$ with that of $\psi^{\mathrm{ZB}}$ and $\psi^{\mathrm{CVV}}$. In all settings, we set $n_1=n_2=500$ and $k=10$. In settings (a), (b) and (d), we choose $p=800$ and $\rho$ from 0 to 20. In setting (c), we choose $p=250$ and $\rho$ from 0 to 50. We see that $\psi^{\mathrm{sparse}}_{\omega,\tau}$ is robust to model misspecification and exhibits good power in all settings. The test $\psi^{\mathrm{desne}}_{\eta}$ is robust to non-normal design and noise, but exhibits a slight reduction in power in a correlated design. The advantage of $\psi^{\mathrm{sparse}}_{\omega,\tau}$ and $\psi^{\mathrm{dense}}_{\eta}$ over competing methods is least significant in the ANOVA design in setting (c), where each row vector of the design matrices has all mass concentrated in one coordinate. In all other settings where the rows of the design matrices are more `incoherent' in the sense that all coordinate have similar magnitude, $\psi^{\mathrm{sparse}}_{\omega,\tau}$ and $\psi^{\mathrm{dense}}_{\eta}$ start having nontrivial power at a signal $\ell_2$ norm 10 to 20 times smaller than that of the competitors.

\begin{figure}[htbp]
\centering
    \begin{subfigure}[b]{0.48\textwidth}
        \includegraphics[width=\textwidth]{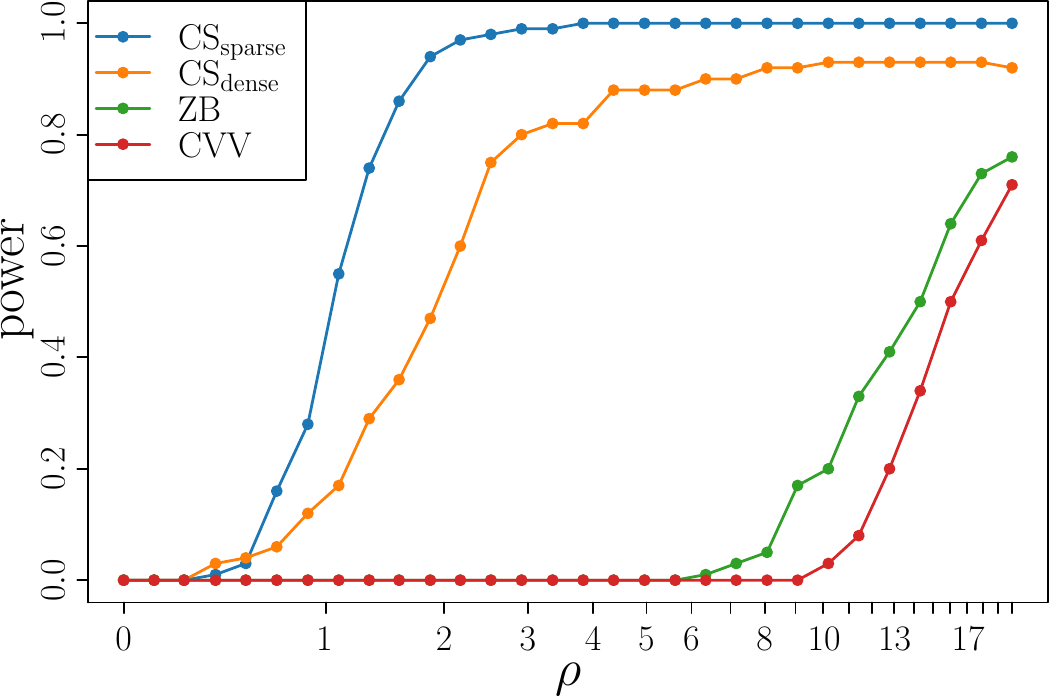}
        \caption{Correlated design $\Sigma=(2^{-|i-j|})_{i,j\in[p]}$}
    \end{subfigure}
    ~
    \begin{subfigure}[b]{0.48\textwidth}
        \includegraphics[width=\textwidth]{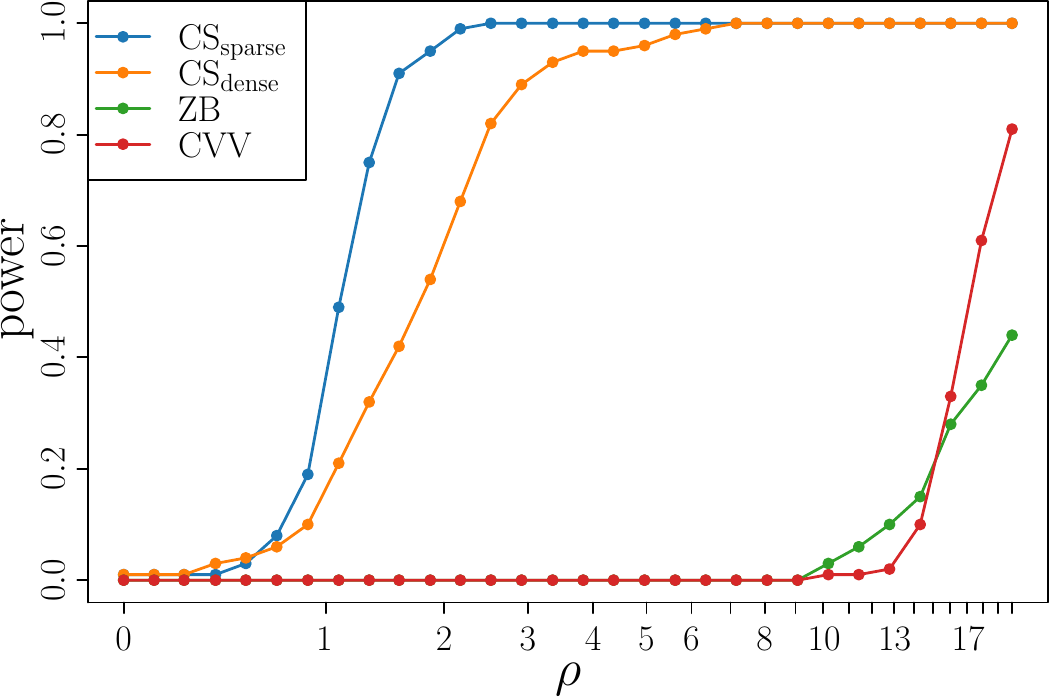}
        \caption{Rademacher design}
    \end{subfigure}
    \\[.2cm]
    \begin{subfigure}[b]{0.48\textwidth}
        \includegraphics[width=\textwidth]{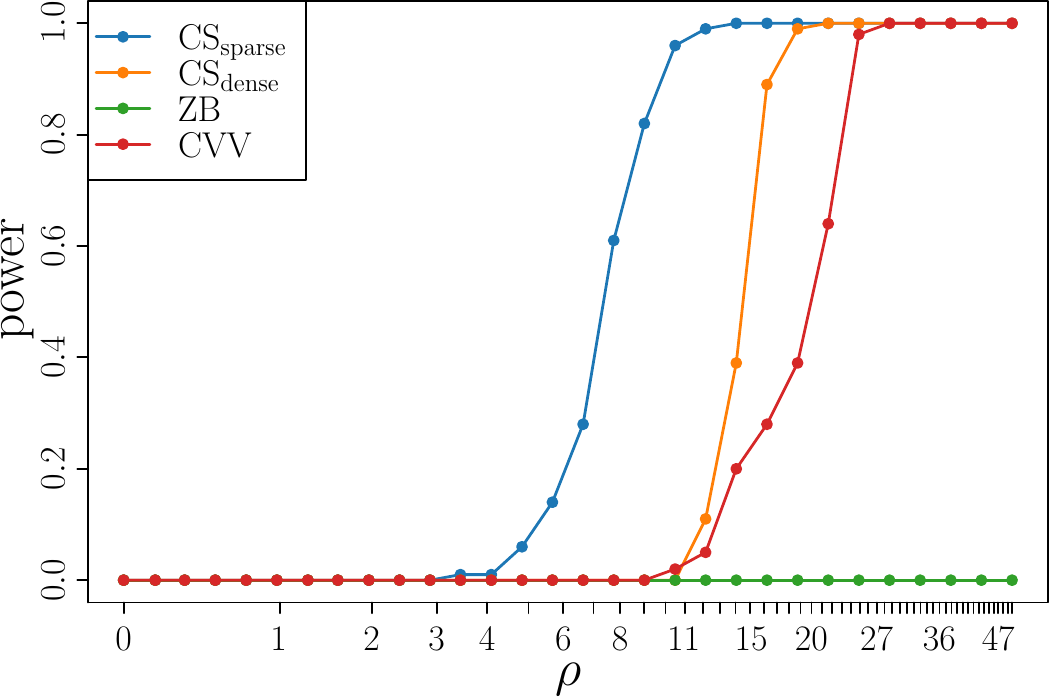}
        \caption{One-way balanced ANOVA design}
    \end{subfigure}
    ~
    \begin{subfigure}[b]{0.48\textwidth}
        \includegraphics[width=\textwidth]{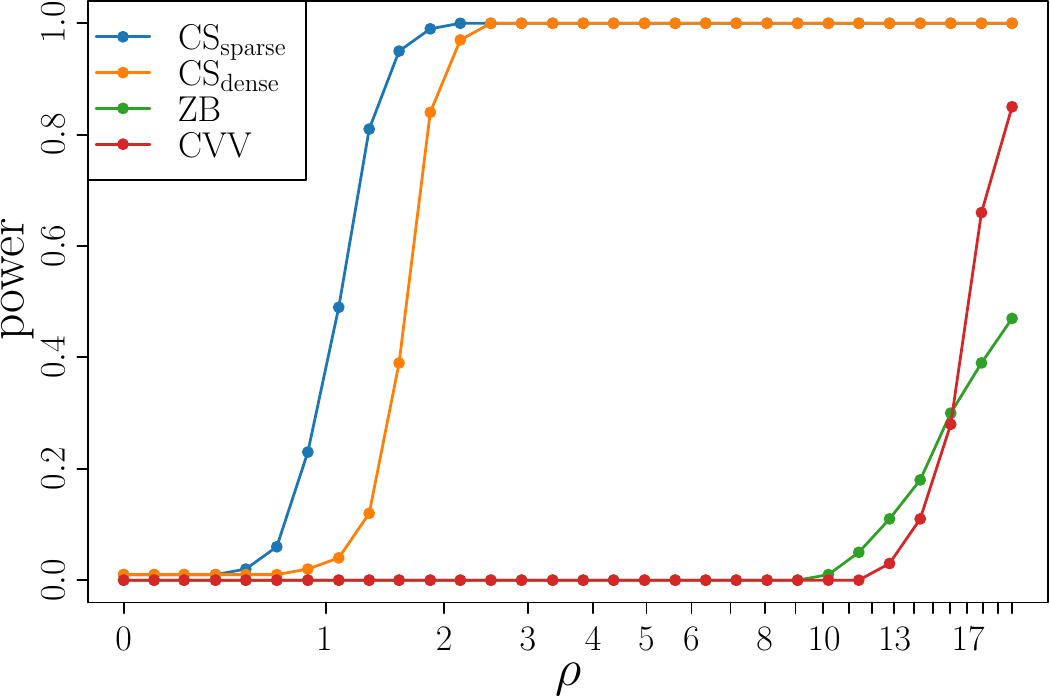}
        \caption{Gaussian design with $t_4/\sqrt{2}$ noise}
    \end{subfigure}
\caption{\label{Fig:misspecification}Power functions of different methods in models with non-Gaussian design or non-Gaussian noise, plotted against signal $\ell_2$ norm $\rho$ on a logarithmic grid.  Details of the models are in Section~\ref{Sec:Misspecification}.}
\end{figure}


\section{Analysing a single-cell dataset}
\label{sec:data}
Here, we illustrate the applicability of our methodology on a single-cell RNA sequencing dataset from \cite{Suo2022.01.17.476665}. The dataset consists of the logarithmic normalized gene expression levels of 33538 genes measured in 91298 cells. For simplicity, we focus on the subset of $n=7816$ cells that have been labelled as either CD4$^+$ T cells ($n_1= 4852$) or T regulatory (TREG) cells ($n_2=2964$), two closely related T cell subpopulations, and only keep the $p=4123$ genes whose log normalized expression variance is at least 1 in the two cell subpopulations. We are interested in testing for difference in the gene regulatory networks in the two cell subpopulations. This can be modelled by the difference in their respective Gaussian graphical model networks and tested by comparing the nodewise regression coefficients of each gene against the remaining genes in CD4$^+$ T cells and TREG cells. The left column of Table~\ref{Tab:RealData} summarizes the genes that report significant difference in their nodewise regression coefficients from our complementary sketching method, which we call `master regulators'. Among the nine genes identified to have significant difference in their nodewise regression coefficients, FOXP3, CTLA4, IL2RA, IL7R, IKZF2, CD83, ANXA1 are all known to be important regulators, from several independent pathways, essential for the function of the TREG cell type \citep{bayer2007function,walker2013treg,kim2015stable,doebbeler2018cd83, toomer2019essential, bai2020targeting}. 

In addition to identifying the master regulator genes, a slight modification of our algorithm also allows us to identify their top interacting partners insofar as the two T cell subpopulations are concerned. Specifically, after performing complementary sketching to obtain sketched design $W$ and response $Z$ in Step 4 of Algorithm~\ref{Algo:Test}, we may compute the Lasso solution path \citep{tibshirani1996regression}. The right column of Table~\ref{Tab:RealData} shows genes corresponding to the first eight nonzero coefficients entering the solution path, which can be interpreted as the top interacting partners of the master regulator genes. 

Computationally, we remark that when applying Algorithm~\ref{Algo:Test} to a differential network analysis setting, we can precompute an orthonormal basis spanning the orthogonal complement of the column span of $(X_1^\top, X_2^\top)^\top$, and obtain individual sketching matrices $A$ for each nodewise regression by augmenting that basis with one additional vector. For example, on an 8-core 3.20 GHz desktop machine, our algorithm was able to test for all $p=4123$ pairs of nodewise regressions in 1.6 hours (averaging 1.4 seconds per node).
Our code and preprocessed dataset for the real data analysis are both available on GitHub\footnote{\url{https://github.com/wangtengyao/compsket/}}.

It is interesting to contrast our analysis to the common differential-expression-based approach for identifying master regulator genes which determine the identity of different cell types. Differential expression analysis simply compares the expression levels of a gene in two different cell types, typically with the Mann--Whitney--Wilcoxon test. We have highlighted in bold in Table~\ref{Tab:RealData} all genes that are differentially expressed in CD4$^+$ T cells and TREG cells (at 0.05 level after Bonferroni correction). It can be seen that all our master regulators are differentially expressed. However, differential expression analysis identifies a much larger set of genes, many potentially belonging to the same pathway and dependent on each other. Overall, our complementary sketching approach allows for more precise identification of the central players in gene regulatory networks.
\begin{table}[htp]
\begin{center}
\begin{tabular}{l p{0.7\textwidth}}
    \toprule
master regulators & top interacting partners\\
\midrule
\textbf{IKZF2} & MT-ND4L, \textbf{HLA-B}, MT-ATP8, \textbf{ETS1}, \textbf{FYB1}, \textbf{JUNB}, RNF213, \textbf{HLA-C}\\
\textbf{FOXP3} & MT-ND4L, MT-ATP8, \textbf{S100A4}, \textbf{CD96}, \textbf{ISG20}, BIRC2, \textbf{SRSF4}, \textbf{GZMM}\\
\textbf{CD83} & \textbf{HSPA1A}, \textbf{NFKBIA}, \textbf{RGS2}, \textbf{MTRNR2L12}, \textbf{NR4A1}, \textbf{PSMC3}, BAG3, \textbf{SRP9}\\
\textbf{IL2RA} & \textbf{ENO1}, \textbf{ARID5B}, RPL23, CDC42, \textbf{CREM}, \textbf{CISH}, \textbf{GADD45B}, \textbf{PMAIP1}\\
\textbf{ANXA1} & \textbf{JUNB}, \textbf{JUN}, \textbf{TNFAIP3}, \textbf{FOSB}, \textbf{CALM2}, \textbf{ABLIM1}, \textbf{RGS2}, \textbf{CHI3L2}\\
\textbf{CD8A} & \textbf{FTL}, \textbf{SLC25A3}, \textbf{CD8B}, COTL1, \textbf{PTPRCAP}, PCBP1, \textbf{STMN1}, IGFBP2\\
\textbf{CTLA4} & \textbf{RGS1}, \textbf{GBP2}, \textbf{RPS10}, ZFP36L1, \textbf{TAGAP}, \textbf{STAT3}, \textbf{RPS4Y1}, \textbf{SRGN}\\
\textbf{GNG8} & RPL41, HSPB1, \textbf{OST4}, LTB, \textbf{TERF2IP}, \textbf{CUTA}, \textbf{PPDPF}, \textbf{IFITM1}\\
\textbf{IL7R} & RPL41, \textbf{RPL27A}, \textbf{VIM}, \textbf{TRBC2}, SLC25A6, \textbf{CORO1A}, \textbf{RPS26}, \textbf{TRAC}\\
\bottomrule
\end{tabular}
\end{center}
\caption{\label{Tab:RealData}Genes with significant difference identified by the complementary sketching algorithm, together with their top eight interacting partners using graphical Lasso post complementary sketching. Genes that are identified to be significant by the Mann--Whitney--Wilcoxon test after Bonferroni correction are shown in bold.}
\end{table}



%


\section{Proof of main results}
\label{sec:proofs}
\begin{proof}[Proof of Theorem~\ref{Thm:Null}]
By Condition~\ref{cond:sigma}, we may work on the almost sure event $\Omega_\sigma := \{ \hat{\sigma}/ \sigma = 1 + \smallO(1)\}$.
Under the null hypothesis where $\beta_1= \beta_2$, we have $\theta = 0$ and therefore, $Z = W\theta + \xi = \xi \sim N_m(0, \sigma^2 I_m)$. In particular, $Q_j/\sigma \sim N(0,1)$ for all $j \in [p]$.

Thus, noting the independence of $\hat{\sigma}$ and the sample and employing a union bound, we have for $\omega=\hat\sigma\sqrt{(4+\varepsilon)\log p}$ and any $\tau > 0$ that
\begin{equation*}
    \mathbb{P}(T\geq \tau \mid \hat{\sigma} ) \leq \sum_{j=1}^p \mathbb{P}(|Q_j| /\sigma \geq \omega/\sigma \mid \hat{\sigma}) \leq p \exp\bigl({-\omega^2/(2\sigma^2})\bigr). 
\end{equation*}
Keeping the preceding display in mind, by the independence of $\hat{\sigma} $ and the sample, we bound for $p$ sufficiently large
\begin{align*}
    \mathbb{P}( T \ge \tau \mid \Omega_\sigma) & \le p \exp\bigl( - (1 - \smallO(1))(2+\varepsilon/2) \log p\bigr) \\
    & \leq p \exp( - (2 + \varepsilon/4) \log p).
\end{align*}
Noting that $\Omega_\sigma$ is an almost sure event and that $p^{-1-\varepsilon/4}$ is summable for any $\varepsilon>0$, the almost sure convergence in the theorem statement follows from the Borel--Cantelli lemma.
\end{proof}

To prove Proposition~\ref{Prop:WtW-general}, we need the following proposition, which considers tail bounds for $\|Wu\|$ for a fixed $u\in\mathcal{S}^{p-1}$ in the special case $\Sigma = I_p$.
\begin{prop}
    \label{Prop:WtW}
    Under the conditions of Proposition~\ref{Prop:WtW-general}, we further assume $\Sigma=I_p$. There exists a random sequence $(h_n)_n$ such that $h_n \xrightarrow{\mathrm{a.s.}} 4\kappa_1$ for $\kappa_1$ defined in Lemma~\ref{lemma:spectral-limit} and that for any sequence $(\delta_n)_n$ satisfying $\log(1/\delta_n) = \smallO(p)$, we have for all large $p$ that
    \[
    \mathbb{P}\biggl\{\biggl| \frac{1}{n} (W^\top W)_{1,1} - h_n\biggr| \!>\! (8+\smallO(1)) \sqrt\frac{\log (1/\delta_n)}{n} \bigl( (\kappa_1 + \kappa_2) \sqrt{n/p} + \kappa_1 \bigr)\biggr\}\!\leq\! \delta_n.
    \]
\end{prop}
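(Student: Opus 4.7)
My plan is to exploit the rotational invariance of the Gaussian design to decompose $(W^\top W)_{1,1}$ as a product of three mutually independent random variables with standard distributions, and then concentrate each factor separately. Let $x:=Xe_1\in\mathbb{R}^n$ denote the first column of $X$ and let $K:=J_1^\top J_1$ where $J_1 := [I_{n_1},\, 0]\in\mathbb{R}^{n_1\times n}$. Using $A_1^\top X_1+A_2^\top X_2=0$ we have $W=2A_1^\top X_1$, so
\[
(W^\top W)_{1,1}/4 = \|A_1^\top X_1 e_1\|^2 = x^\top K(I_n-P_X)Kx,
\]
where $P_X=I_n-AA^\top$ is the orthogonal projection onto $\mathrm{col}(X)$.

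I then perform a QR factorization $X=QR$, in which $Q\in\mathbb{O}^{n\times p}$ is Haar on the Stiefel manifold and independent of $R$, while $R_{11}^2=\|x\|^2\sim\chi_n^2$ and $q:=Q_{*,1}=x/\|x\|$ is uniform on $\mathcal{S}^{n-1}$. Writing $\alpha:=\|J_1 q\|^2$ and $v_\perp:=Kq-\alpha q\in q^\perp$ (so $\|v_\perp\|^2=\alpha(1-\alpha)$), and using $A^\top q=0$, I obtain the key identity
\[
(W^\top W)_{1,1}/4 = R_{11}^2\,v_\perp^\top AA^\top v_\perp = R_{11}^2\,\alpha(1-\alpha)\,\beta,\qquad \beta:=\frac{v_\perp^\top AA^\top v_\perp}{\|v_\perp\|^2}.
\]
Conditional on $x$, the remaining columns of $X$ are iid $N_n(0,I_n)$, so $\mathrm{col}(A)$ is Haar-uniform among $m$-dimensional subspaces of $q^\perp$; by rotational invariance, $\beta\mid x\sim\mathrm{Beta}(m/2,\,(p-1)/2)$ with a distribution that does not depend on $x$. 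Combined with the independence of the norm and direction of an isotropic Gaussian, this establishes that $R_{11}^2\sim\chi_n^2$, $\alpha\sim\mathrm{Beta}(n_1/2,\,n_2/2)$, and $\beta\sim\mathrm{Beta}(m/2,\,(p-1)/2)$ are \emph{mutually} independent.

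With this factorization in hand, the proposition reduces to concentrating each of the three factors. The Laurent--Massart inequality yields $|R_{11}^2/n-1|\lesssim\sqrt{\log(1/\delta)/n}$ with probability at least $1-\delta$, while sub-Gaussian concentration for Beta variables gives $|\alpha-n_1/n|\lesssim\sqrt{\log(1/\delta)/n}$ and $|\beta-m/(m+p-1)|\lesssim\sqrt{mp\log(1/\delta)/n^3}$ with the same probability guarantees. Choosing $h_n:=4(R_{11}^2/n)\alpha(1-\alpha)\cdot m/(m+p-1)$, which tends almost surely to $4\kappa_1$ by Lemma~\ref{lemma:spectral-limit}, I linearize
\[
\frac{(W^\top W)_{1,1}}{n}-h_n = 4\frac{R_{11}^2}{n}\alpha(1-\alpha)\Bigl(\beta-\frac{m}{m+p-1}\Bigr)
\]
and apply a union bound. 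The $\sqrt{n/p}$ factor in the target bound originates from the scaling $\sqrt{\mathrm{Var}(\beta)}\asymp\sqrt{mp}/n^{3/2}$, which after multiplication by $\alpha(1-\alpha)\to r/(1+r)^2$ produces a contribution of order $(\kappa_1+\kappa_2)\sqrt{n/p}\cdot\sqrt{\log(1/\delta_n)/n}$, while residual fluctuations of $R_{11}^2/n$ and $\alpha$ supply the $\kappa_1\sqrt{\log(1/\delta_n)/n}$ term. The main technical obstacle lies in the factorization step: identifying $\beta$ as Beta-distributed and independent of $x$ requires careful conditioning and the Haar-uniformity of $\mathrm{col}(A)$ within $q^\perp$. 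After that, the remaining estimates are routine, with the only delicate task being the numerical bookkeeping needed to reach the stated constants $\kappa_1,\kappa_2$.
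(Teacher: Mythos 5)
Your proof is correct, and it takes a genuinely different route from the paper's. The paper also starts from the QR decomposition, but then diagonalises $B=Q_1^\top Q_1$ and writes $(W^\top W)_{1,1}/4=\frac{t_1^2}{s_1^2}\sum_j a_jG_{j,1}^2$ as a weighted chi-squared sum whose weights $a_j=\lambda_j(1-\lambda_j)$ come from the spectrum of a matrix-variate $\mathrm{Beta}_p(n_1/2,n_2/2)$; it then needs the limiting spectral distribution of that ensemble (Lemma~\ref{lemma:spectral-limit}, via \citealp{BaiHuPanZhou2015}) to identify $\|a\|_1/p\to\kappa_1$ and $\|a\|_2/\sqrt p\to\kappa_2$, which is where both constants in the bound originate. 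You instead condition on the first column $x=Xe_1$ and obtain the \emph{exact} scalar factorisation $(W^\top W)_{1,1}/4=R_{11}^2\,\alpha(1-\alpha)\,\beta$ into three mutually independent variables with classical laws ($\chi^2_n$, $\mathrm{Beta}(n_1/2,n_2/2)$, $\mathrm{Beta}(m/2,(p-1)/2)$); I checked the key steps ($A^\top q=0$, the Haar-uniformity of $\mathrm{col}(A)$ within $q^\perp$ conditionally on $x$, and the resulting mutual independence) and they are sound. This reduces everything to scalar Laurent--Massart bounds and avoids random matrix theory for this proposition entirely; the bookkeeping does close, since your deviation coefficient $\frac{8r}{(1+r)^2}\cdot\frac{\sqrt{mp}(\sqrt m+\sqrt p)}{n^{3/2}}$ is dominated by $8\{(\kappa_1+\kappa_2)\sqrt{n/p}+\kappa_1\}$ using $\kappa_2\geq\frac{r}{(1+r)^2}\cdot\frac{p\sqrt m}{n^{3/2}}$ (only the last term under the square root in $\kappa_2^2$ is needed). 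Two small remarks. First, with your choice of $h_n$ the fluctuations of $R_{11}^2/n$ and $\alpha$ are absorbed into $h_n$ exactly, so the entire deviation comes from $\beta$ and the $\kappa_1$ term of the target is pure slack; your closing sentence attributing that term to ``residual fluctuations'' of the first two factors is inconsistent with your own display (and the appeal to Lemma~\ref{lemma:spectral-limit} for $h_n\to4\kappa_1$ is unnecessary --- your $h_n$ converges by elementary concentration plus Borel--Cantelli). Second, and more substantively for the paper's downstream use: your $h_n$ depends on the first column of $X$ and is not invariant under right-rotations $X\mapsto XH$, whereas the proof of Proposition~\ref{Prop:WtW-general} re-uses a \emph{single} centring sequence simultaneously for every direction $u$ in a net via the distributional identity $u^\top\Delta_I u\stackrel{\mathrm{d}}{=}e_1^\top\Delta_I e_1$; the paper's $h_n=4\|a\|_1/p$ is a spectral (hence rotation-invariant) functional chosen precisely for that purpose. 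Your proof establishes the proposition as stated, but it would need a rotation-invariant modification of $h_n$ (or a per-direction union bound) before it could be substituted into the argument for Proposition~\ref{Prop:WtW-general}.
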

\begin{proof}
Let $X = QT$ be the QR decomposition of $X$, which is almost surely unique if we require the upper-triangular matrix $T$ to have non-negative entries on the diagonal.

Let $Q_1$ be the submatrix obtained from the first $n_1$ rows of $Q$. From Lemma~\ref{Lemma:GaussianQR}, $Q_1$ and $T$ are independent and $T$ has independent entries distributed as $T_{j,j} = t_j > 0$ with $t_j^2\sim \chi^2_{n-j+1}$ for $j\in[p]$ and $T_{j,k} = z_{j,k} \sim N(0,1)$ for $1\leq j < k\leq p$.

Define $B := Q_1^\top Q_1$ and let $B = V\Lambda V^\top$ be its eigendecomposition, which is almost surely unique if we require the diagonal entries of $\Lambda$ to be non-increasing and the diagonal entries of $V$ to be nonnegative. By Lemma~\ref{Lemma:GaussianQR}, $Q$ is uniformly distributed on $\mathbb{O}^{n\times p}$, which means $Q \stackrel{\mathrm{d}}{=} QH$ for any $H\in\mathbb{O}^{p\times p}$.
Consequently, $Q_1 \stackrel{\mathrm{d}}{=} Q_1 H$ and $B\stackrel{\mathrm{d}}{=} H^\top B H = (H^\top V) \Lambda (H^\top V)^\top$. Since the group $\mathbb{O}^{p\times p}$ acts transitively on itself through left multiplication, the joint density of $V$ and $\Lambda$ must be a function of $\Lambda$ only. In particular, $V$ and $\Lambda$ are independent.

Note that $X_1 = Q_1 T$. Thus, $X_1^\top X_1 = T^\top B T$ and $X_2^\top X_2 = T^\top (I_p-B)T$. By Lemma~\ref{Lemma:GramW}, we have
\begin{align}
W^\top W &= 4 X_1^\top A_1 A_1^\top X_1 = 4 (X_1^\top X_1)(X_1^\top X_1 + X_2^\top X_2)^{-1}(X_2^\top X_2)\nonumber\\
& = 4 T^\top B(I_p - B) T = 4 T^\top V \Lambda(I_p-\Lambda) V^\top T.\label{Eq:WtW}
\end{align}
Let $1\geq \lambda_1\geq \cdots\geq \lambda_p\geq 0$ be the diagonal entries of $\Lambda$. Define $a_j = \lambda_j(1-\lambda_j)$ for $j\in[p]$ and set $a := (a_1,\ldots,a_p)$. We can write $t_1^2 = s_1^2 + r_1^2$ with $s_1^2\sim \chi^2_p$ and $r_1^2\sim\chi^2_{n-p}$ such that $s_1\geq 0$, $r_1\geq 0$ are independent of each other and independent of everything else. By Lemma~\ref{Lemma:GaussianQR}, we have that $G_{j,1}:=s_1 V_{j,1}$ for $j\in[p]$ are independent $N(0,1)$ random variables. Note that
\[
\frac{1}{4}(W^\top W)_{1,1} = \sum_{j=1}^p t_1^2 a_j V_{j,1}^2 = \frac{t_{1}^2}{s_{1}^2} \sum_{j=1}^p a_j G_{j,1}^2.
\]
Let $\delta = \delta_n>0$ be chosen later. By \citet[Lemma~1]{LaurentMassart2000}, applied conditionally on $a$, we have with probability at least $1-6\delta$ that all the following inequalities hold:
\begin{gather*}
\|a\|_1 - 2\|a\|_2\sqrt{\log\frac{1}{\delta}}\leq \sum_{j=1}^p a_j G_{j,1}^2 \leq \|a\|_1 + 2\|a\|_2\sqrt{\log\frac{1}{\delta}} + 2\|a\|_\infty \log\frac{1}{\delta},\\
p-2\sqrt{p \log\frac{1}{\delta}}\leq s_1^2 \leq p+2\sqrt{p\log\frac{1}{\delta}}+2\log\frac{1}{\delta},\\
n-2\sqrt{n\log\frac{1}{\delta}}\leq t_1^2 \leq n+2\sqrt{n\log\frac{1}{\delta}}+2\log\frac{1}{\delta}.
\end{gather*}
Keeping in mind that $\|a\|_\infty\leq 1/4$, we have with probability at least $1-6\delta$ that
\begin{align*}
&\frac{n-2\sqrt{n\log(1/\delta)}}{p+2\sqrt{p\log(1/\delta)}+2\log(1/\delta)}\bigl\{\|a\|_1 - 2\|a\|_2\sqrt{\log(1/\delta)}\bigr\}  \leq
\frac{1}{4}(W^\top W)_{1,1} \\
&\quad \leq \frac{n+2\sqrt{n\log (1/\delta)}+2\log(1/\delta)}{p-2\sqrt{p\log (1/\delta)}}\bigl\{\|a\|_1 + 2\|a\|_2\sqrt{\log(1/\delta)} + \frac{1}{2}\log(1/\delta)\bigr\}
\end{align*}
If $\log(1/\delta) = \smallO(p)$, then for each $p$ with probability at least $1-6\delta$, we have that
\begin{equation}
    \begin{aligned}
      \biggl| \frac{(W^\top W)_{1,1}}{4} & - \frac{n}{p} \| a \|_1 \biggr|   \leq \| a \|_1 \frac{2 \sqrt{n \log(1 / \delta)}}{ p} \bigl( 1 + \!\sqrt{n/p}\bigr) \\
     & + \frac{n}{p} \biggl\{2 \| a \|_2 \sqrt{\log(1/ \delta)} + \frac{\log(1/\delta)}{2} \biggr\} \\
     & + \mathcal{O}_s\biggl(\frac{\|a\|_1\log(1/\delta)}{p}+\frac{\|a\|_2\log(1/\delta)}{\sqrt{p}}+\frac{\log^{3/2} (1/\delta)}{p^{1/2}}\biggr).
    \end{aligned}
     \label{Eq:ErrorTerms}
\end{equation}
By the definition of $B$, we have for $H:=T(X^\top X)^{-1/2}\in\mathbb{O}^{p\times p}$ that
\[
H^\top B H = H^\top T^{-\top}X_1^\top X_1 T^{-1}H = (X^\top X)^{-1/2} (X_1^\top X_1)(X^\top X)^{-1/2},
\]
which follows the matrix-variate Beta distribution $\mathrm{Beta}_p(n_1/2, n_2/2)$ as defined before Lemma~\ref{lemma:spectral-limit}. Hence, the diagonal elements of $\Lambda$ are the same as the eigenvalues of a $\mathrm{Beta}_p(n_1/2, n_2/2)$ random matrix. By~\eqref{Eq:ErrorTerms}, for each $p$, with probability at least $1-6\delta$, we have
\begin{equation}
    \label{Eq:OneVectorTail}
    \begin{aligned}
        \biggl|  (W^\top W)_{1,1}  -\frac{4n}{p}\|a\|_1\biggr| & \leq 8 \sqrt{n \log (1/\delta)} \bigl( (\kappa_1 + \kappa_2) \sqrt{n/p} + \kappa_1 \bigr) \\
        & \qquad \qquad \quad + \mathcal{O}_s\biggl(\log(1/\delta) + \frac{\log^{3/2}(1/\delta)}{p^{1/2}}\biggr).
    \end{aligned}
\end{equation}
The desired result follows by taking $h_n = 4\|a\|_1/p$ and observing that by Lemma~\ref{lemma:spectral-limit} $h_n \to 4\kappa_1$ almost surely.
\end{proof}

With Proposition~\ref{Prop:WtW}, we are now in a position to prove Proposition~\ref{Prop:WtW-general} in its general form.

\begin{proof}[Proof of Proposition~\ref{Prop:WtW-general}]
Let $V_i := X_i \Sigma^{-1/2}, i = 1, 2$ and set $V = (V_1^\top, V_2^\top)^\top$. Then each row of $V$ follows $N(0, I_p)$ and is independent of each other. We have
\begin{align*}
    W^\top W & = 4 (X_1^\top X_1) (X^\top X)^{-1} (X_2^\top X_2) \\
    & = 4 (\Sigma^{1/2} V_1^\top V_1 \Sigma^{1/2}) (\Sigma^{-1/2} (V^\top V)^{-1} \Sigma^{-1/2}) (\Sigma^{1/2} V_2^\top V_2 \Sigma^{1/2}) \\
    & =  4\Sigma^{1/2} V_1^\top V_1 (V^\top V)^{-1} V_2^\top V_2 \Sigma^{1/2} \stackrel{\mathrm{d}}{=} \Sigma^{1/2} (W_I^\top W_I) \Sigma^{1/2},
\end{align*}
where $W_I$ is the complementarily sketched design matrix when all entries of $X$ are independent standard normals (i.e.\ $\Sigma = I_p$). Let $h_n$ be the random sequence satisfying Proposition~\ref{Prop:WtW} and define $\Delta := W^\top W/ n - h_n  \Sigma$ and $\Delta_I := W_I^\top W_I / n - h_n I_p$. We start by controlling the $\ell$-sparse operator norm of $\Delta$ for an arbitrary $\ell\in[p]$. By Lemma~\ref{Lemma:Net}, there exists a $1/4$-net $\mathcal{N}_\ell$ of $\{v\in\mathcal{S}^{p-1}: \|v\|_0\leq \ell\}$ of cardinality at most $\binom{p}{\ell}9^\ell$, such that
\begin{align}
\label{Eq:Prop2tmp1}
\|\Delta\|_{\ell,\mathrm{op}} &= 2\sup_{u\in \mathcal{N}_\ell} u^\top \Delta u \stackrel{\mathrm{d}}{=} 2\sup_{u\in \mathcal{N}_\ell} (\Sigma^{1/2}u)^\top \Delta_I (\Sigma^{1/2}u) \nonumber\\
&\leq 2\|\Sigma \|_{k,\mathrm{op}} \sup_{u\in\mathcal{N}_\ell'} u^\top \Delta_I u \leq 2\overline{\lambda}\sup_{u\in\mathcal{N}_\ell'} u^\top \Delta_I u,
\end{align}
where $\mathcal{N}_\ell' := \{\Sigma^{1/2}u / \|\Sigma^{1/2}u\|_2: u\in\mathcal{N}_\ell\}$. We claim that for any $u\in\mathcal{S}^{p-1}$, we have $u^\top \Delta_I u \stackrel{\mathrm{d}}= e_1^\top \Delta_I e_1$. This is because for any $H\in\mathbb{O}^{p\times p}$, we have $V\stackrel{\mathrm{d}}= VH$ and hence by Lemma~\ref{Lemma:GramW} that
\begin{align}
H^\top W_I^\top W_I H &= 4(H^\top V_1^\top V_1H)(H^\top V^\top VH)^{-1}(H^\top V_2^\top V_2H)\nonumber\\ &\stackrel{\mathrm{d}}
=4(V_1^\top V_1)(V^\top V)^{-1}(V_2^\top V_2)= W_I^\top W_I,\label{Eq:RotationalSymmetry}
\end{align}
which in particular implies our claim.  Consequently, by Proposition~\ref{Prop:WtW} and a union bound, when $\log(1/\delta) = \smallO(p)$, we have with probability at least $1- 6|\mathcal{N}_\ell|\delta$ that
\begin{equation}
\label{Eq:Prop2tmp3}
    \| \Delta\|_{\ell,\mathrm{op}} \leq (16+\smallO(1))\overline{\lambda}\sqrt\frac{\log(1/\delta)}{n}\bigl\{(\kappa_1+\kappa_2)\sqrt{n/p}+\kappa_1\bigr\}.
\end{equation}

To prove~\eqref{Eq:WtWdiag-1}, we recall $\diag(\Sigma) = I_p$ and set $\ell=1$ and $\delta=p^{-3}$ in~\eqref{Eq:Prop2tmp3} to obtain with probability at least $1-54p^{-2}$ that
\begin{equation}
    \begin{aligned}
        \max_{j\in[p]}\biggl|\frac{(W^\top W)_{j,j}}{nh_n} & - 1\biggr|  = \frac{1}{h_n}\|\Delta\|_{1,\mathrm{op}} \\
        & \leq \frac{(16+\smallO(1))\overline{\lambda}}{h_n}\sqrt\frac{3\log p}{n}\bigl\{(\kappa_1+\kappa_2)\sqrt{n/p}+\kappa_1\bigr\}.
    \end{aligned}
\label{Eq:Prop2tmp4}
\end{equation}
The first conclusion follows by noting that $h_n\to 4\kappa_1$ and an application of the Borel--Cantelli lemma (since $p^{-2}$ is summable).

To prove~\eqref{Eq:kopNorm-1}, we set $\ell=k$ and $\delta = (10ep/k)^{-(k+4)}$. By~\eqref{Eq:klogpovern-1}, we have $\log(1/\delta) = (k+4)\log(10ep/k) = \smallO(p)$ and
\begin{align*}
|\mathcal{N}|\delta & \leq 9^k\binom{p}{k}\biggl(\frac{10ep}{k}\biggr)^{-(k+4)}  \leq \biggl(\frac{9ep}{k}\biggr)^k\biggl(\frac{10ep}{k}\biggr)^{-(k+4)} \\
& \leq \frac{0.9^k}{(ep/k)^4} \leq \max(p^{-2}, 0.9^{\sqrt{p}}).
\end{align*}
By the Borel--Cantelli lemma,
\begin{equation}
\label{Eq:Prop2tmp2}
    \| \Delta\|_{k,\mathrm{op}} \leq (16+\smallO(1))\overline{\lambda}\sqrt\frac{(k+4)\log(10ep/k)}{n}\bigl\{(\kappa_1+\kappa_2)\sqrt{n/p}+\kappa_1\bigr\}.
\end{equation}
holds for all but finitely many $p$.  We work on $p$ sufficiently large such that \eqref{Eq:Prop2tmp2} holds henceforth. Define $\hat D:=\diag(W^\top W) / (n h_n)$. By \eqref{Eq:Prop2tmp4} and a Taylor expansion, we have that $\|\hat D^{-1/2} - I\|_{\mathrm{op}} = (1 +\smallO(1))(2h_n)^{-1}\|\Delta\|_{1,\mathrm{op}}$. Thus,
\begin{align*}
\|\hat D^{-1/2} W^\top & W \hat D^{-1/2} -  W^\top W  \|_{k,\mathrm{op}} \\
& \leq \|\hat D^{-1/2} - I\|_{\mathrm{op}}\| W^\top W \|_{k,\mathrm{op}} (1+\|\hat D^{-1/2}\|_{\mathrm{op}})\\
&\leq (2+\smallO(1))\frac{\|\Delta\|_{1,\mathrm{op}}}{2h_n} \|n\Delta+nh_n\Sigma\|_{k,\mathrm{op}}
\leq (1+\smallO(1)){n\overline{\lambda}\|\Delta\|_{1,\mathrm{op}}}
\end{align*}
where the final bound follows from~\eqref{Eq:Prop2tmp2} and the fact that $\|\Sigma\|_{k,\mathrm{op}}\leq \overline{\lambda}$. Consequently, noting $h_n\to 4\kappa_1$, for all large $p$ we have
\begin{align*}
\|\tilde W^\top \tilde W - \Sigma\|_{k,\mathrm{op}} &= \frac{1}{h_n} \|\hat D^{-1/2} W^\top W \hat D^{-1/2}/n - h_n \Sigma\|_{k,\mathrm{op}} \\
&\leq \frac{1}{h_n}\|\Delta\|_{k,\mathrm{op}} + \frac{1}{nh_n}\|\hat D^{-1/2} W^\top W \hat D^{-1/2} -  W^\top W  \|_{k,\mathrm{op}} \\
&\leq \frac{1}{h_n} (\|\Delta\|_{k,\mathrm{op}} + \overline{\lambda} \|\Delta\|_{1,\mathrm{op}}) \\
& \leq C_{s,r}\biggl\{\overline{\lambda}\sqrt\frac{k\log(ep/k)}{n}+\overline{\lambda}^2\sqrt\frac{\log p}{n}\biggr\},
\end{align*}
for $C_{s,r} := 9 (1+\sqrt{1+1/s}+\sqrt{s+r-1+1/s+1/r})$, which completes the proof.
\end{proof}

\begin{proof}[Proof of Theorem~\ref{Thm:Alternative}]
By Proposition~\ref{Prop:WtW-general}, it suffices to work with a deterministic sequence of $W$ such that~\eqref{Eq:WtWdiag-1} and \eqref{Eq:kopNorm-1} holds, which we henceforth assume in this proof.

Define $\tilde\theta = (\tilde\theta_1,\ldots,\tilde\theta_p)^\top$ such that $\tilde\theta_j := \theta_j \|W_j\|_2$. Then, from~\eqref{Eq:ZW}, we have
\[
Z = \tilde W \tilde\theta + \xi,
\]
for $\xi \sim N_m(0, I_m)$. Write $Q := (Q_1,\ldots,Q_p)^\top$ and $S := \supp(\theta)=\supp(\tilde\theta)$, then
\[
Q_S = (\tilde W^\top Z)_S \sim N_k\bigl((\tilde W^\top \tilde W)_{S,S}\tilde\theta_S,\, (\tilde W^\top \tilde W)_{S,S}\bigr).
\]
Our strategy will be to control $\|Q_S\|_2^2$. To this end, we first look at the quantity $\|(\tilde W^\top \tilde W)_{S,S}^{-1/2}Q_S\|_2^2$, which has a noncentral chi-squared distribution $\chi^2_k(\|\tilde\theta_S^\top (\tilde W^\top \tilde W)_{S,S}\tilde\theta_S\|_2^2)$. By~\eqref{Eq:WtWdiag-1} and~\eqref{Eq:kopNorm-1}, we have
\begin{align*}
\|\tilde\theta_S^\top (\tilde W^\top \tilde W)_{S,S}\tilde\theta_S\|_2^2 \geq \|(\tilde W^\top \tilde W)_{S,S}\|_{\mathrm{op}} \|\theta_S\|_2^2 &\geq \{\underline{\lambda}-\smallO(1)\}4n\kappa_1\rho^2 \\
&\geq \frac{28\sigma^2 k\log p}{\underline{\lambda}},
\end{align*}
where we have used the fact that $\rho^2 \geq 7k\log p /(\underline{\lambda}^2 n\kappa_1)$ in the final bound. Thus, by \citet[Lemma~8.1]{Birge2001}, we have with probability at least $1-p^{-2}$ that
\begin{align}
\|(\tilde W^\top \tilde W)_{S,S}^{-1/2}Q_S\|_2^2& \nonumber\\
&\hspace{-1.8cm}\geq k + \|\tilde\theta_S^\top (\tilde W^\top \tilde W)_{S,S}\tilde\theta_S\|_2^2 - 2\sqrt{(2k+4\|\tilde\theta_S^\top (\tilde W^\top \tilde W)_{S,S}\tilde\theta_S\|_2^2)\log p}\nonumber\\
&\hspace{-1.8cm}\geq \{1-\smallO(1)\}\|\tilde\theta_S^\top (\tilde W^\top \tilde W)_{S,S}\tilde\theta_S\|_2^2 - 4\|\tilde\theta_S^\top (\tilde W^\top \tilde W)_{S,S}\tilde\theta_S\|_2\sqrt{\log p}\nonumber\\
&\hspace{-1.8cm}\geq \frac{(6-\smallO(1)) \sigma^2 k\log p}{\underline{\lambda}}. \label{Eq:tmp2}
\end{align}
Consequently, by~\eqref{Eq:tmp2} and~\eqref{Eq:kopNorm-1} again, we have with probability at least $1-p^{-2}$ that
\begin{align}
\label{Eq:tmp4}
\|Q_S\|_2^2&\geq \|(\tilde W^\top \tilde W)_{S,S}\|_{\mathrm{op}}\|(\tilde W^\top \tilde W)_{S,S}^{-1/2}Q_S\|_2^2 \nonumber\\
&\geq \{\underline{\lambda}-\smallO(1)\}\frac{6 \sigma^2 k\log p}{\underline{\lambda}} \geq (6-\smallO(1))\sigma^2 k\log p.
\end{align}
By Condition~\ref{cond:sigma}, we define the almost sure event $ \Omega_\sigma := \{  \hat{\sigma}/\sigma = 1 + \smallO(1) \}$ and $\omega_0 :=  \sigma \sqrt{(4+\varepsilon)\log p}$.  We observe that on the event $\Omega_\sigma$,
\(
    ({\omega}/{\omega_0})^2 - 1 = (\hat{\sigma} /\sigma +1) (\hat{\sigma}/ \sigma -1)  = \smallO(1).
\)
From~\eqref{Eq:tmp4},  using the tuning parameters $\omega=\hat{\sigma}\sqrt{(4+\varepsilon)\log p}$ and $\tau \leq  \hat{\sigma}^{2} k\log p$,  we have, conditionally on $\Omega_\sigma$, for sufficiently large $p$ that with probability at least $1-2p^{-2}$,
\[
T = \sum_{j=1}^pQ_j^2\mathbbm{1}_{\{|Q_j|\geq \omega\}} \geq \|Q_S\|_2^2 - k\omega_0^2 \bigl( 1 + (\omega/\omega_0)^2 - 1 \bigr) \geq \hat\sigma^{2}  k\log p \geq \tau.
\]
which would allow us to reject the null on the `good' event $\Omega_\sigma$.
Keeping in mind that $\Omega_\sigma$ is an almost sure event, we proceed to bound
\begin{align*}
    \mathbb{P}( T \le \tau)  = \mathbb{P} ( T \le \tau \mid \Omega_\sigma)  \le 2p^{-2},
\end{align*}
The desired almost sure convergence follows by the Borel--Cantelli lemma since $1/p^2$ is summable over $p \in\mathbb{N}$.
\end{proof}

\begin{proof}[Proof of Corollary~\ref{Cor:SparseUpper}]
The first inequality follows from the definition of~$\mathcal{M}_X(k,\rho)$. An inspection of the proofs of Theorems~\ref{Thm:Null} and~\ref{Thm:Alternative} of the main text reveals that both results only depend on the complementary-sketched model $Z=W\theta+\xi$, and hence hold uniformly over $(\beta_1,\beta_2)$. Thus, we have from Theorem~\ref{Thm:Null} that $\sup_{\beta\in\mathbb{R}^p} P_{\beta,\beta}^X (\psi^{\mathrm{sparse}}_{\lambda,\tau}\neq 0) \xrightarrow{\mathrm{a.s.}} 0$ and from Theorem~\ref{Thm:Alternative} that $\sup_{\beta_1,\beta_2\in\mathbb{R}^p: (\beta_1-\beta_2)/2\in\Theta_{p,k}(\rho)} P_{\beta,\beta}^X (\psi^{\mathrm{sparse}}_{\lambda,\tau}\neq 1) \xrightarrow{\mathrm{a.s.}} 0$. Combining the two completes the proof.
\end{proof}

\begin{proof}[Proof of Theorem~\ref{Thm:LowerBound}]
By considering a trivial test $\tilde\psi \equiv 0$, we see that $\mathcal{M}\leq 1$. Thus, it suffices to show that $\mathcal{M}\geq 1-\smallO(1)$. Note that since $k\leq p^{1/2}$, condition~\eqref{Eq:klogpovern-1} is satisfied and hence by Proposition~\ref{Prop:WtW-general}, it suffices to work with a deterministic sequence of $X$ (and hence $W$) such that~\eqref{Eq:WtWdiag-1} and \eqref{Eq:kopNorm-1} holds, which we henceforth assume in this proof.

It is convenient to reparametrize the distributions in terms of $(\gamma,\theta) = ((\beta_1+\beta_2)/2, (\beta_1-\beta_2)/2)$ instead of $(\beta_1,\beta_2)$. Define $Q^X_{\gamma,\theta}:=P^X_{\beta_1, \beta_2}$. Let $L := (X_1^\top X_1 + X_2^\top X_2)^{-1} (X_2^\top X_2 - X_1^\top X_1) $ and $\pi$ be the uniform distribution on
\[
\Theta_0 :=\{\theta\in\{k^{-1/2}\rho, -k^{-1/2}\rho, 0\}^p:\|\theta\|_0=k\} \subseteq \Theta.
\]
We write $Q_0:=Q^X_{0,0}$ and let $Q_\pi:= \int_{\theta\in\Theta_0}Q^X_{L\theta, \theta}\,d\pi(\theta)$ denote the uniform mixture of $Q^X_{\gamma,\theta}$ for $\{(\gamma,\theta):\theta\in\Theta_0,\gamma=L\theta\}$. Let $\mathcal{L}:=dQ_{\pi}/dQ_0$ be the likelihood ratio between the mixture alternative $Q_\pi$ and the simple null $Q_0$. We have that
\begin{align*}
\mathcal{M} &\geq \inf_{\tilde\psi} \Bigl\{1 - (Q_0 - Q_{\pi}) \tilde\psi \Bigr\} = 1 - \frac{1}{2}\int \biggl|1 - \frac{dQ_{\pi}}{dQ_0}\biggr| dQ_0\\
& \geq 1- \frac{1}{2}\biggl\{\int \biggl(1 - \frac{dQ_{\pi}}{dQ_0}\biggr)^2 dQ_0\biggr\}^{1/2} \geq 1 - \frac{1}{2} \{Q_0(\mathcal{L}^2)-1\}^{1/2}.
\end{align*}
So it suffices to prove that $Q_0(\mathcal{L}^2) \leq 1+\smallO(1)$. Writing $\tilde X_1 = X_1L+X_1$ and $\tilde X_2 = X_2L-X_2$, by the definition of $Q_\pi$, we compute that
\begin{align*}
\mathcal{L} = \int \frac{dQ^X_{L\theta,\theta}}{dQ_{0}}\,d\pi(\theta) &= \int \frac{e^{-\frac{1}{2}(\|Y_1-X_1L\theta-X_1\theta\|^2+\|Y_2 - X_2L\theta+X_2\theta\|^2)}}
{e^{-\frac{1}{2}(\|Y_1\|^2+\|Y_2\|^2)}}\,d\pi(\theta)\\
&=\int e^{\langle \tilde X_1\theta, Y_1\rangle - \frac{1}{2}\|\tilde X_1\theta\|^2 +\langle \tilde X_2\theta, Y_2\rangle - \frac{1}{2}\|\tilde X_2\theta\|^2}\,d\pi(\theta).
\end{align*}
For $\theta\sim \pi$ and some fixed $J_0\subseteq [p]$ with $|J_0|=k$, let  $\pi_{J_0}$ be the distribution of $\theta_{J_0}$ conditional on $\supp(\theta) = J_0$. Let $J,J'$ be independently and uniformly distributed on $\{J_0\subseteq [p]: |J_0|=k\}$. Define $\tilde\theta := (\tilde\theta_1,\ldots,\tilde\theta_p)^\top$ and $\tilde\theta':=(\tilde\theta'_1,\ldots,\tilde\theta'_p)^\top$ such that $\tilde\theta_j := \theta_j \|W_j\|_2$ and $\tilde\theta'_j:=\theta'_j\|W_j\|_2$. Since $\Sigma\in\mathcal{C}(D)$, we can write $\Sigma = \Sigma_0+\Gamma$ for $\Sigma_0\in\mathrm{RowSp}(D)$ and $\|\Gamma\|_{\max} = \smallO(D / (k\log p))$. Also, since $\Sigma$ is symmetric and $\|\Sigma\|_{\max} = \|\diag(\Sigma)\|_{\max} = 1$, we may assume without loss of generality that $\Sigma_0$ is symmetric and $\|\Sigma_0\|_{\max} \leq 1$. By Fubini's theorem and Lemmas~\ref{Lemma:MessingAround} and~\ref{Lemma:GramW}, we have
\begin{align}
Q_{0} (\mathcal{L}^2) &= \iint_{\theta,\theta'} e^{\frac{1}{2} \|\tilde X_1(\theta+\theta')\|^2 - \frac{1}{2}\|\tilde X_1\theta\|^2- \frac{1}{2}\|\tilde X_1\theta'\|}\nonumber \\
&\hspace{3cm}\times e^{ \frac{1}{2} \|\tilde X_2(\theta+\theta')\|^2 - \frac{1}{2}\|\tilde X_2\theta\|^2- \frac{1}{2}\|\tilde X_2\theta'\|^2}\,d\pi(\theta) \,d\pi(\theta')\nonumber\\
& = \iint_{\theta,\theta'} e^{\theta^\top (\tilde X_1^\top \tilde X_1 + \tilde X_2^\top \tilde X_2)  \theta'}\,d\pi(\theta) \,d\pi(\theta') \nonumber\\
&= \iint_{\theta,\theta'} e^{\theta^\top W^\top W  \theta'}\,d\pi(\theta) \,d\pi(\theta')\nonumber \\
&\leq \{\mathbb{E}(e^{2\tilde\theta^\top (\tilde W^\top \tilde W  - \Sigma_0) \tilde \theta'})\}^{1/2}\{\mathbb{E}(e^{2\tilde\theta^\top \Sigma_0 \tilde \theta'})\}^{1/2} ,\label{Eq:LambdaSqr}
\end{align}
where we apply the Cauchy--Schwarz inequality in the final inequality. We now bound the two factors in the final expression separately. By~\eqref{Eq:WtWdiag-1}, we have
\begin{equation}
\label{Eq:rhotovartheta}
\vartheta:=\max\bigl\{\max_{j\in J} |\tilde\theta_j|, \max_{j\in J'} |\tilde\theta'_j|\bigr\} \leq (1+\smallO(1))\sqrt\frac{4n\kappa_1\rho^2}{k}.
\end{equation}
By~\eqref{Eq:kopNorm-1}, we have that
\begin{align}
\label{Eq:FactorI_tbc}
\vartheta^2  \|(\tilde W^\top \tilde W &- \Sigma_0)_{J,J'}\|_{\mathrm{F}} \leq \sqrt{k}\vartheta^2\|(\tilde W^\top \tilde W - \Sigma)_{J,J'}\|_{\mathrm{op}} + \vartheta^2\|\Gamma_{J,J'}\|_{\mathrm{F}}\nonumber\\
& \hspace{-1cm} \leq \vartheta^2\bigl\{\sqrt{k}\|\tilde W^\top \tilde W - \Sigma\|_{2k,\mathrm{op}} + k\|\Gamma\|_{\max}\bigr\}\nonumber\\
& \hspace{-1cm}\leq  \frac{(4+\smallO(1))n\kappa_1\rho^2}{k} \biggl[C_{s,r}\biggl\{\overline{\lambda}\sqrt\frac{k^2\log(ep)}{n}+\overline{\lambda}^2\sqrt\frac{k\log p}{n}\biggr\} + \smallO\biggl(\frac{D}{\log p}\biggr)\biggr].
\end{align}
Since $\rho\leq \sqrt\frac{(1-2\alpha-\varepsilon)k\log p}{4Dn\kappa_1}$, we have from~\eqref{Eq:FactorI_tbc} that $\vartheta^2 \|(\tilde W^\top \tilde W - \Sigma_0)_{J,J'}\|_{\mathrm{F}}=\smallO(1)$. Consequently, by Lemma~\ref{Lemma:Chaos}, we have for sufficiently large $p$ that
\begin{align}
\mathbb{E}(e^{2\tilde\theta^\top (\tilde W^\top \tilde W  - \Sigma_0) \tilde \theta'}) &\leq 1 + C\vartheta^2\|(\tilde W^\top \tilde W - \Sigma_0)_{J,J'}\|_{\mathrm{F}} e^{4\vartheta^2\|(\tilde W^\top \tilde W - \Sigma_0)_{J,J'}\|_{\mathrm{F}}^2} \nonumber\\ &= 1+\smallO(1).
\label{Eq:FactorI}
\end{align}
For the second factor on the right-hand side of~\eqref{Eq:LambdaSqr}, we have by Lemma~\ref{Lemma:Chaos2} that
\begin{equation}
\mathbb{E}(e^{2\tilde\theta^\top \Sigma_0\tilde\theta'}) \leq  \biggl\{1 + \frac{Dk}{p}\bigl(\cosh(2\vartheta^2 D) - 1\bigr)\biggr\}^k \leq \exp\biggl(\frac{Dk^2}{p}e^{2\vartheta^2 D} \biggr). \label{Eq:FactorII}
\end{equation}
Since $\alpha \in [0,1/2)$ and $\rho^2 \leq \frac{(1-2\alpha-\varepsilon)k\log p}{8Dn\kappa_1}$, from~\eqref{Eq:rhotovartheta}, we deduce that
\[
2\vartheta^2D \leq (1-2\alpha-\varepsilon+\smallO(1))\log p
\]
and hence $e^{2\vartheta^2 D}D k^2/p = p^{-\varepsilon+\smallO(1)} = \smallO(1)$. So, from~\eqref{Eq:LambdaSqr}, \eqref{Eq:FactorI} and~\eqref{Eq:FactorII} we have $Q_0(\mathcal{L}^2) \leq  1+\smallO(1)$, which completes the proof.
\end{proof}

\begin{proof}[Proof of Theorem~\ref{Thm:AlternativeDense}]
As in the proof of Theorem~\ref{Thm:Alternative}, we work with a deterministic sequence of $W$ such that~\eqref{Eq:WtWdiag-1} and \eqref{Eq:kopNorm-1} are satisfied.
Furthermore, by Condition~\ref{cond:sigma-strong}, we henceforth work on the almost sure event $\Omega_\sigma = \{ |\hat\sigma/\sigma - 1 | = \smallO(p^{-1/2}\log^{1/2} p) \}$.
For $\tilde\theta = (\tilde\theta_1,\ldots,\tilde\theta_p)^\top$ such that $\tilde\theta_j := \theta_j \|W_j\|_2$, we have from~\eqref{Eq:ZW} that
\(
Z = \tilde W \tilde\theta + \xi,
\)
for $\xi \sim N_m(0, \sigma^2 I_m)$. Hence, under the null hypothesis,  $(\|Z\|_2^2/\sigma^2) \sim \chi^2_m$, which by \citet[Lemma~1]{LaurentMassart2000} yields that
\[
    \mathbb{P}\bigl\{\|Z\|_2^2 / \sigma^2 \geq m + 2\sqrt{m\log(1/\delta)} + 2\log(1/\delta) \bigr\} \leq \delta.
\]
We set $\delta = p^{- (1+\varepsilon/2)}$ and have for $\eta_0=\sigma^2 (m +2\sqrt{(1+\varepsilon/2)m\log p}+2(1+\varepsilon/2)\log p)$, $\eta = \hat\sigma^2 ( m + 2 \sqrt{m(1+\varepsilon)\log p} + 2 (1 + \varepsilon) \log p)  \ge \eta_0$ on $\Omega_\sigma$ for all $p$ sufficiently large.
We bound, for all $p$ sufficiently large,
\begin{equation*}
    \mathbb{P} ( \| Z \|^2 \ge \eta) \le \mathbb{P}( \|Z\|^2 \ge \eta_0) \le p^{-(1+\varepsilon/2)},
\end{equation*}
whence, by the Borel--Cantelli lemma, we have $\psi^{\mathrm{dense}}_\eta(X_1,X_2,Y_1,Y_2) \xrightarrow{\mathrm{a.s.}} 0$.

On the other hand, under the alternative, $(\|Z\|_2^2/\sigma^2) \sim \chi^2_m(\|W\theta\|_2^2)$. Observe from~\eqref{Eq:WtWdiag-1} and~\eqref{Eq:kopNorm-1} that
\begin{align*}
        \|W\theta\|_2^2&=\|\tilde W\tilde \theta\|_2^2 = \|\Sigma^{1/2}\tilde\theta\|_2^2 + \tilde\theta^\top (\tilde W^\top \tilde W - \Sigma)\tilde\theta \\
        &\geq  \|\tilde\theta\|_2^2\bigl(\underline{\lambda}-\|\tilde W^\top \tilde W - \Sigma\|_{k,\mathrm{op}}\bigr)\\
&\geq (4\underline{\lambda}-\smallO(1))n\kappa_1\rho^2 \geq (8-\smallO(1))\sigma^2 \sqrt{m\log p}.
\end{align*}
By a similar argument, we also have $\|W\theta\|_2^2 \leq (4\overline{\lambda}+\smallO(1))n\kappa_1\rho^2 = \smallO(m)$.
Thus, by \citet[Lemma~8.1]{Birge2001}, we have with probability at least $1-p^{-2}$ that
\begin{align*}
\|Z\|_2^2/\sigma^2 &\geq  m + \|W\theta\|_2^2 - 2\sqrt{(2m+4\|W\theta\|_2^2)\log p}\\
&\geq m + (8-2\sqrt{2}-\smallO(1))\sqrt{m\log p}
\end{align*}
which is at least $\eta = \hat\sigma^2 ( m + 2\sqrt{2(1+\varepsilon)m\log p} + 2(1+\varepsilon) \log p) \le $ for all $p$ sufficiently large and any $\varepsilon\in (0, 5]$, conditionally on the almost sure event~$\Omega_\sigma$. 

Consequently, we have $\mathbb{P}(\|Z\|_2^2\leq \eta)\leq p^{-2}$ for all large $p$. As $p^{-2}$ is summable, by the Borel--Cantelli lemma,  $\psi^{\mathrm{dense}}_\eta(X_1,X_2,Y_1,Y_2)\xrightarrow{\mathrm{a.s.}}1$.
\end{proof}

\begin{proof}[Proof of Theorem~\ref{Thm:LowerBoundDense}]
Note that since $k\leq p^{\alpha}$ for $\alpha<1$, condition~\eqref{Eq:klogpovern-1} is satisfied and hence we can work with a deterministic sequence of $W$ satisfying~\eqref{Eq:WtWdiag-1} and~\eqref{Eq:kopNorm-1}. Similar to the proof of Theorem~\ref{Thm:LowerBound}, we write $\Sigma = \Sigma_0 + \Gamma$ for some $\Sigma_0\in\mathrm{RowSp}(D)$ with $\|\Sigma_0\|_{\max}\leq 1$ and $\|\Gamma\|_{\max}=\smallO(D/(k\log p))$. We follow the proof of Theorem~\ref{Thm:LowerBound} up to~\eqref{Eq:FactorI_tbc}. 

Now, noting the assumption on $\rho^2$, we have by~\eqref{Eq:rhotovartheta} that  $\mathbb{E}(e^{2\tilde\theta^\top (\tilde W^\top \tilde W - \Sigma_0)\tilde\theta'}) = 1+\smallO(1)$. It remains to show that  $\mathbb{E}(e^{2\tilde\theta^\top  \Sigma_0\tilde\theta'}) = 1+\smallO(1)$. To this end, we have by Lemma~\ref{Lemma:Chaos2} that
\begin{align*}
\mathbb{E}(e^{2\tilde\theta^\top \Sigma_0\tilde\theta'}) &\leq  \biggl\{1 + \frac{Dk}{p}\bigl(\cosh(2\vartheta^2 D) - 1\bigr)\biggr\}^k \leq \biggl\{1 + \frac{Dk}{p} \bigl(2+\smallO(1)\bigr)\vartheta^4 D^2\biggr\}^k\\
& \leq \exp\biggl\{\frac{(2+\smallO(1))D^3k^2\vartheta^4}{p} \biggr\} = 1+\smallO(1),
\end{align*}
where the second inequality follows by the Taylor expansion of $x\mapsto \cosh(x)$ and the fact that $\vartheta^2 D = (4+\smallO(1))n\kappa_1\rho^2D/k = \smallO(1)$, and the last equality holds by noting $D^3k^2\vartheta^4/p = (16+\smallO(1)) \kappa_1^2 D^3\rho^4 n^2/p = \smallO(1)$. 
\end{proof}


\section{Ancillary results}
\label{sec:ancillary}
\begin{prop}
\label{Prop:MXeq1}
If $k = p \geq \min\{n_1,n_2\}$, then $\mathcal{M}_X(k,\rho) = 1$. If $k=p$ and $p/n_1, p/n_2 \in [\varepsilon,1)$ for any fixed $\varepsilon\in(0,1)$, and $\theta=(\beta_1-\beta_2)/2\in \Theta_{p,k}(\rho)$ with
\[
\rho^2 = \smallO\biggl(\max\biggl\{\frac{p}{(n_1-p)^2}, \frac{p}{(n_2-p)^2}\biggr\}\biggr),
\]
then $\mathcal{M}_X(k,\rho)\xrightarrow{\mathrm{a.s.}} 1$.
\end{prop}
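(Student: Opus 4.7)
The plan for both parts is a Le~Cam two-point lower bound: for each design $X$ in the relevant regime, I would exhibit a single alternative $(\beta_1^\star,\beta_2^\star)$ with $(\beta_1^\star-\beta_2^\star)/2\in\Theta_{p,k}(\rho)$ for which the total variation $\mathrm{TV}(P^X_{0,0}, P^X_{\beta_1^\star,\beta_2^\star})$ is either zero (Part~1) or vanishing almost surely (Part~2). The standard reduction then yields $\mathcal{M}_X(k,\rho)\geq 1-\mathrm{TV}(P^X_{0,0}, P^X_{\beta_1^\star,\beta_2^\star})$. Since $k=p$, the sparsity constraint on $\beta_1-\beta_2$ is vacuous and the alternative is constrained only by $\|\beta_1^\star-\beta_2^\star\|_2\geq 2\rho$.

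For Part~1, I would take without loss of generality $n_1\leq n_2$, so $p\geq n_1$; in the interesting case $p>n_1$, the linear map $X_1:\mathbb{R}^p\to\mathbb{R}^{n_1}$ has a kernel of dimension at least $p-n_1>0$. Picking any unit vector $u\in\ker(X_1)$ and setting $\beta_1^\star:=2\rho u$, $\beta_2^\star:=0$, the identity $X_1 u=0$ immediately gives $P^X_{\beta_1^\star,\beta_2^\star}=N(0,\sigma^2 I_{n_1})\otimes N(0,\sigma^2 I_{n_2})=P^X_{0,0}$, so the total variation is exactly zero and Le~Cam's inequality together with the trivial upper bound $\mathcal{M}_X(k,\rho)\leq 1$ yields equality.

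For Part~2 the kernel trick fails because $p<\min\{n_1,n_2\}$, so I would instead direct the signal along the \emph{weakest} direction of the smaller-sample design. Without loss of generality take $n_1\leq n_2$, so that $\max\{p/(n_1-p)^2,p/(n_2-p)^2\} = p/(n_1-p)^2$. Let $u\in\mathcal{S}^{p-1}$ be a right singular vector of $X_1$ corresponding to $\sigma_{\min}(X_1)$ and set $\beta_1^\star:=2\rho u$, $\beta_2^\star:=0$. By Pinsker's inequality applied to the KL divergence between two Gaussians with equal covariance,
\[
\mathrm{TV}(P^X_{0,0}, P^X_{\beta_1^\star,\beta_2^\star})^2 \leq \tfrac{1}{2}\mathrm{KL}(P^X_{0,0}\,\|\,P^X_{\beta_1^\star,\beta_2^\star}) = \frac{\|X_1\beta_1^\star\|_2^2}{4\sigma^2} = \frac{\rho^2\,\sigma_{\min}^2(X_1)}{\sigma^2}.
\]
Under Condition~\ref{cond:design} and the regime $p/n_1\in[\varepsilon,1)$, the Bai--Yin theorem yields $\sigma_{\min}^2(X_1)=(1+\smallO(1))(\sqrt{n_1}-\sqrt{p})^2$ almost surely; combining with the elementary estimate $(\sqrt{n_1}+\sqrt{p})^2\geq 4p$ (valid since $n_1\geq p$) gives $\sigma_{\min}^2(X_1)\leq (1+\smallO(1))(n_1-p)^2/(4p)$ almost surely, and the hypothesis $\rho^2=\smallO(p/(n_1-p)^2)$ then forces the total variation to vanish a.s., so $\mathcal{M}_X(k,\rho)\xrightarrow{\mathrm{a.s.}}1$.

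The only non-trivial ingredient beyond the Le~Cam reduction is the almost sure spectral estimate on $\sigma_{\min}(X_1)$ supplied by the Bai--Yin theorem; everything else is routine, and the choice of alternative direction is essentially forced by the $k=p$ relaxation, which collapses the composite testing problem into distinguishing a single carefully chosen alternative from the null.
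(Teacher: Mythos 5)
Your argument is correct at the level of rates, but it takes a genuinely different and more elementary route than the paper's. The paper recycles the $\chi^2$-divergence machinery from the proof of Theorem~\ref{Thm:LowerBound}: it places a Dirac prior on the minimal eigenvector of $W^\top W$ with the nuisance set to $\gamma=L\theta$, which profiles the nuisance out exactly so that $P_0(\mathcal{L}^2)=e^{\rho^2\lambda_{\min}(W^\top W)}$ via~\eqref{Eq:LambdaSqr}, and then controls $\lambda_{\min}(W^\top W)$ through Lemma~\ref{Lemma:GramW} (part one) and through Gaussian operator-norm bounds combined with the limiting spectral distribution of matrix-variate Beta matrices from \citet{BaiHuPanZhou2015} (part two). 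You instead run a bare two-point total-variation/Pinsker bound in the original model, set $\beta_2^\star=0$, and aim the coefficient difference at $\ker(X_1)$ or at the minimal right singular direction of the smaller-sample design, so the only random-matrix input is the Bai--Yin asymptotics for $\sigma_{\min}(X_1)$. Your choice of nuisance is suboptimal in principle (you compare against the null point $\beta=0$ rather than profiling), but it loses nothing here: since $X_1^\top A_1A_1^\top X_1\preceq X_1^\top X_1$, one has $\lambda_{\min}(W^\top W)\leq 4\sigma_{\min}^2(X_1)$, and with $n_1\leq n_2$ both quantities are of order $(n_1-p)^2/p$, which is precisely the reciprocal of the $\max$ appearing in the hypothesis on $\rho^2$. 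So the two constructions certify the same detection threshold, and yours avoids the Beta-matrix spectral theory entirely.

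Two caveats. First, your kernel argument for part one requires $p>\min\{n_1,n_2\}$ strictly; at $p=\min\{n_1,n_2\}$ with a generic full-rank design, $\ker(X_1)$ is trivial and you get nothing --- although the paper's own claim that $(X_1^\top X_1)(X^\top X)^{-1}(X_2^\top X_2)$ is singular has the identical gap at this boundary, so this is not a defect specific to your approach. Second, Condition~\ref{cond:design} permits correlated designs $N_p(0,\Sigma)$ with $\Sigma\in\mathcal{C}$, whereas Bai--Yin in its usual form is stated for i.i.d.\ entries; you should write $X_1=V_1\Sigma^{1/2}$ and use $\sigma_{\min}(X_1)\leq \sigma_{\min}(V_1)\,\overline{\lambda}^{1/2}$, which inserts a harmless constant into the $\smallO(1)$ conclusion.
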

\begin{proof}
As in the proof of Theorem~\ref{Thm:LowerBound}, it suffices to control $P_0(\mathcal{L}^2)$ for some choice of prior $\pi$. We write $\lambda_{\min}(W^\top W)$ for the minimum eigenvalue of $W^\top W$ and let $\theta$ be an associated eigenvector with $\ell_2$ norm equal to $\rho$. We choose $\pi$ to be the Dirac measure on $\theta$. Then by~\eqref{Eq:LambdaSqr}, we have
\[
P_0(\mathcal{L}^2) = e^{\theta^\top W^\top W\theta} = e^{\rho^2\lambda_{\min}(W^\top W)}.
\]

When $p\geq n_1$ or $p\geq n_2$, we have by Lemma~\ref{Lemma:GramW} that the Gram matrix
$W^\top W = (X_1^\top X_1)(X^\top X)^{-1}(X_2^\top X_2)$ is singular. Hence, $\lambda_{\min}(W^\top W) = 0$ and $P_0(\mathcal{L}^2) = 1$, which implies that $\mathcal{M}_X(k,\rho) = 1$.

On the other hand, if $p < \min\{n_1,n_2\}$, let $V_1, V_2, V, W_I$ be defined as in the proof of Proposition~\ref{Prop:WtW-general}. Let $T$ and $\Lambda$ be defined as in the proof of Proposition~\ref{Prop:WtW}, with $V$ and $W_I$ taking the roles of $X$ and $W$ therein respectively. Then, by~\eqref{Eq:WtW}, we have
\begin{equation}
\begin{aligned}
\lambda_{\min}(W_I^\top W_I) & \leq 4\|T\|_{\mathrm{op}}^2\lambda_{\min}\bigl(\Lambda(I-\Lambda)\bigr) \\
& \leq 4\|V^\top V\|_{\mathrm{op}} \min\{\lambda_{\min}(\Lambda), 1-\lambda_{\max}(\Lambda)\}.
\end{aligned}
\label{eqn:lambda-min}
\end{equation}
Applying tail bounds for operator norm of a random Gaussian matrix \citep[see, e.g.][Theorem~6.1]{Wainwright2019}, we have
\[
\|V^\top V\|_{\mathrm{op}} \leq n\biggl(1 + \sqrt\frac{p}{n} + \sqrt\frac{2\log p}{n}\biggr)^2 \leq 5n
\]
asymptotically with probability 1. Moreover, by \citet[Theorem~1.1]{BaiHuPanZhou2015}, there is an almost sure event on which the empirical spectral distribution of $\Lambda$ converges weakly to a distribution supported on $[t_\ell,t_r]$, for $t_\ell$ and $t_r$ defined in~\eqref{Eq:tlr}. We will work on this almost sure event henceforth. For $p/n_1\to\xi\in[\varepsilon,1)$ and $p/n_2\to\eta\in[\varepsilon,1)$, we have $\limsup_{p\to\infty} \lambda_{\min}(\Lambda) \leq t_\ell$ and  $\liminf_{p\to\infty} \lambda_{\max}(\Lambda) \geq t_r$. On the other hand, Taylor expanding the expression for $t_\ell$ and $t_r$ in~\eqref{Eq:tlr} with respect to $1-\xi$ and $1-\eta$ respectively, we obtain that
\begin{align*}
t_\ell &= \frac{1}{4}\eta(1-\xi)^2 + \mathcal{O}_\varepsilon\bigl((1-\xi)^3\bigr),\\
1-t_r &= \frac{1}{4}\xi(1-\eta)^2 + \mathcal{O}_\varepsilon\bigl((1-\eta)^3\bigr).
\end{align*}
Therefore, $\min\{\lambda_{\min}(\Lambda), 1-\lambda_{\max}(\Lambda)\} = \mathcal{O}_\varepsilon(\min\{(1-\xi)^2,(1-\eta)^2\})$. By the condition on $\rho^2$ and \eqref{eqn:lambda-min}, we have
\begin{align*}
\rho^2\lambda_{\min}(W^\top W) &\leq \overline{\lambda}\rho^2 \lambda_{\min}(W_I^\top W_I)
\; = \smallO(1),
\end{align*}
which implies that $P_0(\mathcal{L}^2) = 1+\smallO(1)$ and $\mathcal{M}_X\xrightarrow{\mathrm{a.s.}} 1$.
\end{proof}

\begin{lemma}
\label{Lemma:GramW}
Let $n_1,n_2,p,m$ be positive integers such that $n_1+n_2=p+m=n$. Let $X = (X_1^\top, X_2^\top)^\top \in\mathbb{R}^{n\times p}$ be a non-singular matrix with block components $X_1\in\mathbb{R}^{n_1\times p}$ and $X_2\in\mathbb{R}^{n_2\times p}$.  Choose $A_1\in\mathbb{R}^{n_1\times m}$ and $A_2\in\mathbb{R}^{n_2\times m}$ to satisfy~\eqref{Eq:ConditionA}. Then
\[
X_1^\top A_1 A_1^\top X_1 = -X_2^\top A_2 A_2^\top X_2 = (X_1^\top X_1)(X^\top X)^{-1}(X_2^\top X_2).
\]
\end{lemma}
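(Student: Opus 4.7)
The approach is to recognise $AA^\top$ as an orthogonal projection and then read off its diagonal blocks. Condition~\eqref{Eq:ConditionA} says that $A := (A_1^\top, A_2^\top)^\top \in \mathbb{R}^{n\times m}$ satisfies $A^\top A = A_1^\top A_1 + A_2^\top A_2 = I_m$ and $A^\top X = A_1^\top X_1 + A_2^\top X_2 = 0$, so the $m$ orthonormal columns of $A$ lie in $\mathrm{col}(X)^{\perp}$. Since $X$ is nonsingular, $\mathrm{col}(X)^{\perp}$ has dimension $n - p = m$, so the columns of $A$ form an orthonormal basis of this subspace and
\[
AA^\top = I_n - X(X^\top X)^{-1}X^\top.
\]

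Extracting the top-left $n_1 \times n_1$ block of the above identity gives $A_1 A_1^\top = I_{n_1} - X_1(X^\top X)^{-1}X_1^\top$. Writing $P := X_1^\top X_1$ and $Q := X_2^\top X_2$ so that $X^\top X = P + Q$, a short calculation based on the algebraic identity $P - P(P+Q)^{-1}P = P(P+Q)^{-1}Q$ then yields
\[
X_1^\top A_1 A_1^\top X_1 = P - P(P+Q)^{-1}P = (X_1^\top X_1)(X^\top X)^{-1}(X_2^\top X_2),
\]
matching the right-most expression in the lemma. An identical argument applied to $A_2 A_2^\top$ gives $X_2^\top A_2 A_2^\top X_2 = Q(P+Q)^{-1}P$, which coincides with the previous display because $P(P+Q)^{-1}Q$ equals its own transpose (being symmetric as $P - P(P+Q)^{-1}P$).

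Equivalently and more directly, the equality of the first two expressions is immediate from the second condition in~\eqref{Eq:ConditionA}: since $A_1^\top X_1 = -A_2^\top X_2$, we have $X_1^\top A_1 A_1^\top X_1 = (A_1^\top X_1)^\top(A_1^\top X_1) = (-A_2^\top X_2)^\top(-A_2^\top X_2) = X_2^\top A_2 A_2^\top X_2$. Both sides are positive semidefinite Gram matrices, so the minus sign in the statement of the lemma appears to be a typographical slip; the consistent reading is a triple equality without sign change, which is exactly what is used elsewhere in the paper to write $W^\top W = 4X_1^\top A_1 A_1^\top X_1$. There is no substantive obstacle here—the proof is a short exercise in block-matrix manipulation, with the only conceptual input being the identification of $AA^\top$ with the projection $I_n - X(X^\top X)^{-1}X^\top$.
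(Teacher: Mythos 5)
Your proof is correct and essentially identical to the paper's: the paper likewise extracts the top-left block of the identity $\tilde X\tilde X^\top + AA^\top = I_n$ with $\tilde X := X(X^\top X)^{-1/2}$, which is your projection identity $AA^\top = I_n - X(X^\top X)^{-1}X^\top$ in different notation, and then performs the same computation $X_1^\top X_1 - X_1^\top X_1(X^\top X)^{-1}X_1^\top X_1 = (X_1^\top X_1)(X^\top X)^{-1}(X_2^\top X_2)$. You are also right to flag the minus sign in the middle expression as a typographical slip, since both Gram matrices are positive semidefinite and equal by $A_1^\top X_1 = -A_2^\top X_2$, and the paper only ever invokes the equality between the first and last expressions.
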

\begin{proof}
The first equality follows immediately from~\eqref{Eq:ConditionA}. Define $\tilde X_1 := X_1(X^\top X)^{-1/2}$ and $\tilde X_2 := X_2(X^\top X)^{-1/2}$. Then $\tilde X := (\tilde{X}_1^\top, \tilde{X}_2^\top)^\top$ has orthonormal columns with the same column span as $X$, and so
\[
\begin{pmatrix} \tilde X_1 & A_1\\ \tilde X_2 & A_2\end{pmatrix} \in \mathbb{O}^{n\times n}.
\]
In particular, $\tilde X_1 \tilde X_1^\top + A_1 A_1^\top = I_{n_1}$. Therefore,
\begin{align*}
X_1^\top A_1 A_1^\top X_1 &= X_1^\top (I_{n_1} - \tilde X_1\tilde X_1^\top)X_1 = X_1^\top X_1 - X_1^\top X_1(X^\top X)^{-1} X_1^\top  X_1\\
&= X_1^\top X_1(X^\top X)^{-1}(X^\top X - X_1^\top X_1) \\
& = (X_1^\top X_1)(X^\top X)^{-1}(X_2^\top X_2),
\end{align*}
where the last equality holds by noting the block structure of $X$.
\end{proof}
\begin{lemma}
\label{Lemma:MessingAround}
For $X_1\in\mathbb{R}^{n_1\times p}$ and $X_2\in\mathbb{R}^{n_2\times p}$, define $L:=(X_1^\top X_1 + X_2^\top X_2)^{-1} (X_2^\top X_2 - X_1^\top X_1)$, $\tilde X_1 := X_1 (L + I_p)$ and $\tilde{X}_2 := X_2 (L - I_p)$. We have
\[
\tilde X_1^\top \tilde X_1 + \tilde X_2^\top \tilde X_2 = 4 X_1^\top X_1 ( X_1^\top X_1 + X_2^\top X_2)^{-1} X_2^\top X_2.
\]
\end{lemma}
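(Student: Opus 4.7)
My plan is to exploit the fact that $L \pm I_p$ has a very clean form once everything is expressed in terms of $M_1 := X_1^\top X_1$, $M_2 := X_2^\top X_2$, and $S := M_1 + M_2$. The starting observation, obtained by writing $I_p = S^{-1}S$ inside the definition $L = S^{-1}(M_2 - M_1)$, is that
\[
L + I_p = 2\,S^{-1} M_2 \quad\text{and}\quad L - I_p = -2\,S^{-1} M_1.
\]
Hence $\tilde X_1 = 2 X_1 S^{-1} M_2$ and $\tilde X_2 = -2 X_2 S^{-1} M_1$, and since $M_1$, $M_2$, $S$ are all symmetric, forming Gram matrices immediately yields
\[
\tilde X_1^\top \tilde X_1 = 4\, M_2 S^{-1} M_1 S^{-1} M_2, \qquad \tilde X_2^\top \tilde X_2 = 4\, M_1 S^{-1} M_2 S^{-1} M_1.
\]

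Next, I would eliminate the middle factor on each side by substituting $M_1 = S - M_2$ in the first expression and $M_2 = S - M_1$ (or equivalently substituting $M_1 = S - M_2$ on both ends of the second expression) in the second. Expanding and adding, the cubic-in-$M_2$ pieces produced by the two expansions cancel, and the surviving terms collapse to $M_2 - M_2 S^{-1} M_2$. One final application of $M_1 = S - M_2$ rewrites this as $M_1 S^{-1} M_2$, and multiplying by $4$ recovers the claimed right-hand side $4 X_1^\top X_1 (X_1^\top X_1 + X_2^\top X_2)^{-1} X_2^\top X_2$.

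The argument is pure linear-algebraic bookkeeping and presents no serious obstacle; the only point requiring a little care is that $L$ itself is not symmetric in general, so orderings of products must be respected when transposing. A reassuring sanity check is that the target $4 M_1 S^{-1} M_2$ is symmetric, as it must be since it equals a sum of two Gram matrices: this follows from the identity chain $M_1 S^{-1} M_2 = (S - M_2)S^{-1} M_2 = M_2 - M_2 S^{-1} M_2 = M_2 S^{-1}(S - M_2) = M_2 S^{-1} M_1$, again via $S = M_1 + M_2$.
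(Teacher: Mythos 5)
Your proposal is correct and follows essentially the same route as the paper's proof: both hinge on the identities $L+I_p = 2(X_1^\top X_1+X_2^\top X_2)^{-1}X_2^\top X_2$ and $L-I_p = -2(X_1^\top X_1+X_2^\top X_2)^{-1}X_1^\top X_1$, form the two Gram matrices, and then reduce the sum using $M_1+M_2=S$. The only difference is which factors you substitute into during the final simplification (the paper substitutes at the outer ends of each term rather than in the middle), which is immaterial bookkeeping.
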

\begin{proof}
Write $G_1 := X_1^\top X_1$, $G_2 := X_2^\top X_2$. It is clear that
\begin{align*}
    L - I_p &  = - 2 (X_1^\top X_1 + X_2^\top X_2)^{-1} X_1^\top X_1   = - 2 (G_1 + G_2)^{-1} G_1, \\
    L + I_p & = 2 (X_1^\top X_1 + X_2^\top X_2)^{-1} X_2^\top X_2 = 2 (G_1 + G_2)^{-1} G_2.
 \end{align*}
 Therefore, we have
 \begin{align*}
     \frac{1}{4}( & \tilde{X}_1^\top  \tilde{X}_1 + \tilde{X}_2^\top \tilde{X}_2 )\\
     & = \frac{1}{4} \bigl\{ (L + I_p)^\top X_1^\top X_1 (L + I_p) + (L - I_p)^\top X_1^\top X_2 (L - I_p) \bigr\} \\
     & =   G_2 (G_1 + G_2)^{-1} G_1 (G_1 +  G_2)^{-1} G_2 \\
     & \qquad +  G_1 (G_1 + G_2)^{-1} G_2 (G_1 + G_2)^{-1} G_1 \\
     & =   -  G_1(G_1 + G_2)^{-1} G_1 (G_1 +  G_2)^{-1} G_2 \\
     & \qquad - G_1(G_1 + G_2)^{-1} G_2 (G_1 +  G_2)^{-1} G_2+ 2G_1 (G_1 + G_2)^{-1} G_2 \\
     & = G_1 (G_1 + G_2)^{-1} G_2.
 \end{align*}
 The proof is complete by recalling the definitions of $G_1$ and $G_2$.
\end{proof}

The following lemma concerns the control of the $k$-operator norm of a symmetric matrix. Similar results have been derived in previous works \citep[see, e.g.][Lemma~2]{wang2016statistical}. For completeness, we include a statement and proof of the specific version we use.
\begin{lemma}
\label{Lemma:Net}
For any symmetric matrix $M\in\mathbb{R}^{p\times p}$ and $k\in[p]$, there exists a subset $\mathcal{N}\subseteq \mathcal{S}^{p-1}$ such that $|\mathcal{N}|\leq \binom{p}{k}9^k$ and
\[
\|M\|_{k, \mathrm{op}} \leq 2 \sup_{u \in \mathcal{N}} u^\top M u.
\]
\end{lemma}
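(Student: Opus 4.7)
The plan is to construct $\mathcal{N}$ as a union of standard $\epsilon$-nets, one for each $k$-subset of coordinates, and to convert the supremum over the continuous constraint set into a maximum over the discrete set $\mathcal{N}$ via a polarisation identity that exploits the symmetry of $M$.

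First I would fix the granularity $\epsilon=1/4$. For each subset $S\subseteq[p]$ with $|S|=k$, the unit sphere of the coordinate subspace $\{v\in\mathbb{R}^p:\supp(v)\subseteq S\}$ is isometric to $\mathcal{S}^{k-1}$, and by a standard volumetric argument admits a $(1/4)$-net $\mathcal{N}_S$ with $|\mathcal{N}_S|\leq 9^k$. I would set $\mathcal{N}:=\bigcup_{|S|=k}\mathcal{N}_S\subseteq\mathcal{S}^{p-1}$, which then has cardinality at most $\binom{p}{k}9^k$. After replacing $\mathcal{N}$ by $\mathcal{N}\cup(-\mathcal{N})$ if necessary, we may assume that $\sup_{u\in\mathcal{N}}u^\top Mu=\sup_{u\in\mathcal{N}}|u^\top Mu|$; this at most doubles the cardinality, a factor which can be comfortably absorbed into the bound $9^k$ by noting that the volumetric estimate is not tight.

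Next, pick any $v\in\mathcal{S}^{p-1}$ with $\|v\|_0\leq k$, enlarge its support to some set $S$ of size exactly $k$, and choose $u\in\mathcal{N}_S$ with $\|v-u\|_2\leq 1/4$. Using the symmetry of $M$ to cancel the cross terms $v^\top Mu$ and $u^\top Mv$, we obtain the polarisation identity
\[
v^\top Mv-u^\top Mu=(v+u)^\top M(v-u).
\]
Since both $v+u$ and $v-u$ are supported on the common set $S$ of size $k$, applying $|a^\top Mb|\leq\|M_{S,S}\|_{\mathrm{op}}\|a\|_2\|b\|_2$ for $\supp(a),\supp(b)\subseteq S$, together with $\|M_{S,S}\|_{\mathrm{op}}\leq\|M\|_{k,\mathrm{op}}$, gives
\[
|v^\top Mv-u^\top Mu|\leq\|M\|_{k,\mathrm{op}}\|v+u\|_2\|v-u\|_2\leq\|M\|_{k,\mathrm{op}}\cdot 2\cdot\tfrac{1}{4}=\tfrac{1}{2}\|M\|_{k,\mathrm{op}}.
\]
Taking the supremum over $v$ and rearranging yields $\|M\|_{k,\mathrm{op}}\leq 2\sup_{u\in\mathcal{N}}|u^\top Mu|$, as required.

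The only non-routine step is the inequality $\|M_{S,S}\|_{\mathrm{op}}\leq\|M\|_{k,\mathrm{op}}$: for the symmetric matrix $M_{S,S}\in\mathbb{R}^{k\times k}$, its ordinary operator norm equals $\sup_{w\in\mathcal{S}^{k-1}}|w^\top M_{S,S}w|$, and each such $w$ lifts, by zero-padding outside $S$, to a $k$-sparse unit vector in $\mathbb{R}^p$. This is the single place where the symmetry hypothesis on $M$ is genuinely used; the remaining ingredients — the covering-number estimate $9^k$, the union bound over the $\binom{p}{k}$ coordinate faces, and the polarisation identity — are entirely routine.
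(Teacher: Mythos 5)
Your construction of $\mathcal{N}$ and the approximation argument are essentially the paper's: the paper likewise takes a union of $1/4$-nets of the coordinate faces and expands $v^\top Mv-\tilde v^\top M\tilde v = v^\top M(v-\tilde v)+(v-\tilde v)^\top M\tilde v$, which is exactly the telescoping that your polarisation identity repackages, and your observation that $\|M_{S,S}\|_{\mathrm{op}}\leq\|M\|_{k,\mathrm{op}}$ for $|S|\le k$ is the same device used (implicitly) there to bound the error terms. So the core of your argument is correct and follows the same route.

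The one step that does not work is the symmetrisation $\mathcal{N}\mapsto\mathcal{N}\cup(-\mathcal{N})$: since $(-u)^\top M(-u)=u^\top Mu$, adjoining $-\mathcal{N}$ changes nothing, and in particular cannot turn $\sup_{u\in\mathcal{N}}u^\top Mu$ into $\sup_{u\in\mathcal{N}}|u^\top Mu|$. What your argument actually establishes is $\|M\|_{k,\mathrm{op}}\leq 2\sup_{u\in\mathcal{N}}|u^\top Mu|$, with the absolute value on the right-hand side. This discrepancy is not really your fault: since $\|M\|_{k,\mathrm{op}}$ is defined with an absolute value, the lemma as literally stated is false without the absolute value on the right --- take $M=-I_p$, for which the left-hand side equals $1$ while $2\sup_{u}u^\top Mu=-2$. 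The paper's own proof commits the same silent sign slip by selecting $v\in\argmax_{u}u^\top Mu$ over the $k$-sparse sphere and asserting $\|M\|_{k,\mathrm{op}}=v^\top Mv$. The statement you actually prove, with $|u^\top Mu|$ on the right, is the correct one and is what the downstream application (bounding $\|\Delta\|_{\ell,\mathrm{op}}$ in the proof of Proposition~\ref{Prop:WtW-general}, where the diagonal entries are controlled two-sidedly anyway) genuinely requires; the honest fix is to put the absolute value into the lemma rather than to try to remove it by enlarging the net.
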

\begin{proof}
Define $\mathcal{B}_0(k) := \cup_{J \subset [p], |J|=k} S_{J}$, where $S_J := \{ v \in \mathcal{S}^{p-1}: v_i = 0, \forall i \notin J \}$.
For each $S_J$, we find a $1/4$-net $\mathcal{N}_J$ of cardinality at most $9^k$ \citep[Lemma~5.2]{Vershynin2012}.
Define $\mathcal{N} := \cup_{J \subset [p], | J| = k} \mathcal{N}_J$, which has the desired upper bound on cardinality.
By construction, for $v \in \argmax_{u\in\mathcal{B}_0(k)} u^\top M u$, there exists a $\tilde{v} \in \mathcal{N}$ such that \( | \supp(v) \cup \supp(\tilde{v}) | \leq k \) and $\| v - \tilde{v}\|_2 \leq 1/4$. We have
\begin{align*}
    \| M\|_{k, \mathrm{op}} &= v^\top M v = v^\top M (v - \tilde{v}) + (v - \tilde{v})^\top M \tilde{v} +  \tilde{v}^\top M \tilde{v} \\
    & \le 2 \|v - \tilde{v}\|_2 \| M\|_{k, \mathrm{op}} + \tilde{v}^\top M \tilde{v} 
    \leq \frac{1}{2}\|M\|_{k,\mathrm{op}} + \sup_{u\in\mathcal{N}} u^\top M u.
\end{align*}
The desired inequality is obtained after rearranging terms in the above display.
\end{proof}

The following lemma describes the asymptotic limit of the nuclear and Frobenius norms of the product of a matrix-variate Beta-distributed random matrix and its reflection. Recall that for $n_1+n_2>p$, we say that a $p\times p$ random matrix $B$ follows a matrix-variate Beta distribution with parameters $n_1/2$ and $n_2/2$, written $B\sim \mathrm{Beta}_p(n_1/2,n_2/2)$, if $B = (S_1+S_2)^{-1/2} S_1 (S_1+S_2)^{-1/2}$, where $S_1\sim W_p(n_1,I_p)$ and $S_2\sim W_p(n_2,I_p)$ are independent Wishart matrices  and $(S_1+S_2)^{1/2}$ is the symmetric matrix square root of $S_1+S_2$. Recall also that the spectral distribution function of any $p\times p$ matrix $A$ is defined as $F^A(t) := n^{-1} \sum_{i=1}^p \mathbbm{1}_{\{ \lambda_i^A \le t\}}$, where $\lambda_i^A$s are eigenvalues (counting multiplicities) of the matrix $A$. Further, given a sequence $(A_n)_{n\in\mathbb{N}}$ of matrices, their limiting spectral distribution function $F$ is defined as the weak limit of the $F^{A_n}$, if it exists.
\begin{lemma}
\label{lemma:spectral-limit}
Let $B\sim \mathrm{Beta}_{p}(n_1/2,n_2/2)$ and suppose that $\lambda_1,\ldots,\lambda_p$ are the eigenvalues of $B$.  Define $a = (a_1,\dots, a_p)^\top$, with  $a_j = \lambda_j(1 - \lambda_j)$ for $j\in[p]$. In the asymptotic regime of~\ref{cond:asymp}, we have
\begin{align*}
    \| {a} \|_1/p &\xrightarrow{\mathrm{a.s.}} \kappa_1 ,\\
    \| {a} \|_2/\sqrt{p} &\xrightarrow{\mathrm{a.s.}} \kappa_2,
\end{align*}
where
\[
\kappa_1=\frac{r}{(1+r)^2(1+s)} \quad \text{and} \quad
\kappa_2^2=\frac{r(r+s-rs+r^2s+rs^2)}{(1+r)^4(1+s)^3}.
\]
\end{lemma}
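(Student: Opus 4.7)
The plan is to express $\|a\|_1/p$ and $\|a\|_2^2/p$ as integrals of bounded continuous functions against the empirical spectral distribution (ESD) of $B$, invoke the almost sure weak convergence of this ESD to a deterministic limiting spectral distribution, and then carry out the resulting moment computations in closed form.

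First I would note that, writing $F^B$ for the ESD of $B$, the definitions give
\[
\frac{\|a\|_1}{p} = \int_0^1 x(1-x)\,dF^B(x) \qquad \text{and} \qquad \frac{\|a\|_2^2}{p} = \int_0^1 x^2(1-x)^2 \,dF^B(x).
\]
Since $B$ is of matrix-variate Beta type, all its eigenvalues lie in $[0,1]$, so both integrands are bounded and continuous on the (random) support, and only the behaviour of $F^B$ on $[0,1]$ is relevant. This reduces the problem to computing limits of polynomial moments of $F^B$.

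Second, I would apply Theorem~1.1 of Bai, Hu, Pan and Zhou (2015): under Condition~\ref{cond:asymp}, there exists a deterministic limiting spectral distribution $F$, supported in a compact subinterval $[t_\ell,t_r]\subseteq[0,1]$ (with $t_\ell,t_r$ as in equation~\eqref{Eq:tlr} used in the proof of Proposition~\ref{Prop:WtW}), such that $F^B \Rightarrow F$ almost surely. Combined with the bounded continuity of $x(1-x)$ and $x^2(1-x)^2$, this yields the almost sure convergences
\[
\frac{\|a\|_1}{p} \xrightarrow{\mathrm{a.s.}} m_1 - m_2 \qquad \text{and} \qquad \frac{\|a\|_2^2}{p} \xrightarrow{\mathrm{a.s.}} m_2 - 2m_3 + m_4,
\]
where $m_k:=\int x^k\,dF(x)$ denotes the $k$th moment of $F$.

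Third, I would compute $m_1,\ldots,m_4$ explicitly. The cleanest way is to use the Stieltjes transform of $F$, which under the Bai--Hu--Pan--Zhou parametrisation satisfies an explicit quadratic functional equation whose coefficients are determined by the two limiting ratios encoded by $r$ and $s$; expanding this transform at infinity gives $m_k$ as rational functions of $r$ and $s$. Alternatively, since $B$ is free-multiplicatively conjugate to a combination of Marchenko--Pastur matrices, the moments $m_k$ can be computed by summing over non-crossing partitions using the free cumulants of the constituent Wishart blocks. Either route is mechanical, and substitution $n_1/n\to r/(1+r)$, $n_2/n\to 1/(1+r)$, $p/n\to s/(1+s)$ allows one to verify the claimed identities $m_1-m_2=\kappa_1$ and $m_2-2m_3+m_4=\kappa_2^2$ by direct algebraic simplification; the square-root $\|a\|_2/\sqrt{p}\to\kappa_2$ then follows from the continuous mapping theorem.

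The main obstacle is the final algebraic step: the explicit moment computation is conceptually routine but computationally tedious, and one must carefully track the parameters to land on the stated forms of $\kappa_1$ and $\kappa_2^2$ rather than any equivalent but messier expression. Everything else (the reduction to moments, the bounded continuous functional calculus, and the invocation of Bai--Hu--Pan--Zhou) is a short package once the setup above is in place.
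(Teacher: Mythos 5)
Your proposal is correct and follows essentially the same route as the paper: both reduce the two quantities to integrals of $t(1-t)$ and $t^2(1-t)^2$ against the empirical spectral distribution, invoke Theorem~1.1 of Bai--Hu--Pan--Zhou for almost sure weak convergence, and then evaluate the limiting integrals. The only differences are that the paper computes the limits by integrating directly against the explicit limiting density (whose $t(1-t)$ denominator cancels against the integrand, reducing everything to elementary integrals of $\sqrt{(t_r-t)(t-t_\ell)}$ over $[t_\ell,t_r]$) rather than via Stieltjes-transform or free-cumulant moment expansions, and you should note that the limiting law is not supported only in $[t_\ell,t_r]$ --- it may carry atoms at $0$ and $1$ --- although this is harmless here because both integrands vanish at those points.
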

\begin{proof}
We first look at the limiting spectral distribution of $B$. From the asymptotic relations between $n_1,n_2$ and $p$ in \ref{cond:asymp}, we have that
\[
    p/n_1 \to \xi:= \frac{s+sr}{r+sr} \quad \text{and} \quad
    p/n_2 \to \eta:= \frac{s+sr}{1+s}.
\]
 Define the left and right limits
\begin{equation}
    \label{Eq:tlr}
    t_\ell, t_r := \frac{ (\xi + \eta)\eta + \xi\eta(\xi - \eta) \mp 2\xi\eta\sqrt{\xi - \xi \eta + \eta} }{(\xi + \eta)^2}.
\end{equation}
By \citet[Theorem~1.1]{BaiHuPanZhou2015}, almost surely, weak limit $F$ of $F^B$ exists and is of the form
\(
\max\{1 - 1/\xi, 0\}\delta_0 + \max\{1-1/\eta, 0\}\delta_1 + \mu,
\)
where $\delta_0$ and $\delta_1$ are point masses at $0$ and $1$ respectively, and $\mu$ has a density
\[
\frac{(\xi + \eta)\sqrt{(t_r - t) ( t - t_\ell)} }{ 2 \pi \xi \eta t ( 1-t)}\mathbbm{1}_{[t_\ell,t_r]}
\]
with respect to the Lebesgue measure on $\mathbb{R}$.
Define $h_1: t \mapsto t ( 1- t)$. By the portmanteau lemma \citep[see, e.g.][Lemma~2.2]{vanderVaart2000}, we have almost surely that
\begin{align*}
\| a \|_1/p = F^{B} h_1 \rightarrow F h_1 &= \frac{\xi + \eta }{2\pi \xi \eta }\int_{t_\ell}^{t_r}  \sqrt{(t_r-t)(t - t_\ell)} dt = \frac{\xi + \eta }{16 \xi \eta } (t_r-t_\ell)^2 \\
        & = \frac{r}{(1+r)^2(1+s)}.
\end{align*}
Similarly, for $h_2: t \mapsto t^2(1-t)^2$, we have almost surely that
\begin{align*}
    \| a \|_{2}^2/p \to F h_2 &= \frac{\xi + \eta}{2 \pi \xi \eta }\int_{t_\ell}^{t_r} t(1-t) \sqrt{(t_r -t) ( t - t_\ell)}dt\\
    & = \frac{\xi + \eta}{256 \xi \eta }(t_r-t_\ell)^2(8 t_\ell - 5 t_\ell^2 + 8 t_r - 6 t_\ell t_r - 5 t_r^2) \\
    &= \frac{r(r+s-rs+r^2s+rs^2)}{(r+1)^4(s+1)^3}.
\end{align*}
Define $\kappa_1 := F h_1$ and $\kappa_2 := (F h_2)^{1/2}$, we arrive at the lemma.
\end{proof}
The following result concerning the QR decomposition of a Gaussian random matrix is probably well-known. However, since we did not find results in this exact form in the existing literature, we have included a proof here for completeness. Recall that for $n\geq p$, the set $\mathbb{O}^{n\times p}$ can be equipped with a uniform probability measure that is invariant under the action of left multiplication by $\mathbb{O}^{n\times n}$ \citep[see, e.g.\ Stiefel manifold in][Section~2.1.4]{Muirhead1982}.
\begin{lemma}
\label{Lemma:GaussianQR}
Suppose $n\geq p$ and $X$ is an $n\times p$ random matrix with independent $N(0,1)$ entries. Write $X = HT$, with $H$ taking values in $\mathbb{O}^{n\times p}$ and $T$ an upper-triangular $p\times p$ matrix with non-negative diagonal entries. This decomposition is almost surely unique. Moreover, $H$ and $T$ are independent, with $H$ uniformly distributed on $\mathbb{O}^{n\times p}$ with respect to the invariant measure and $T = (t_{j,k})_{j,k\in[p]}$ having independent entries satisfying $t_{j,j}^2 \sim \chi^2_{p-j+1}$ and $t_{j,k}\sim N(0,1)$ for $1\leq j<k\leq p$.
\end{lemma}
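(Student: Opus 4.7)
The plan is to establish the result by analyzing the Gram--Schmidt orthonormalization of the columns of $X$ step by step, combined with the rotational invariance of the Gaussian law. Almost sure existence and uniqueness come first: the columns of $X$ form an $n$-dimensional Gaussian sample of size $p \leq n$, hence are almost surely linearly independent, and on this event Gram--Schmidt produces a unique $(H, T)$ with $T$ upper triangular and strictly positive on the diagonal.

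For the distribution of $T$, I would proceed inductively on the column index $j$. Let $\mathcal{F}_{j-1}$ denote the sigma-algebra generated by $X_1, \ldots, X_{j-1}$ and let $P_{j-1}$ be the orthogonal projection onto $\mathrm{span}(H_1, \ldots, H_{j-1})$, which is $\mathcal{F}_{j-1}$-measurable. The Gram--Schmidt recursion gives $t_{i,j} = H_i^\top X_j$ for $i < j$, $t_{j,j} = \|(I_n - P_{j-1}) X_j\|_2$, and $H_j = (I_n - P_{j-1}) X_j / t_{j,j}$. Since $X_j \sim N_n(0, I_n)$ is independent of $\mathcal{F}_{j-1}$ and $N_n(0, I_n)$ is orthogonally invariant, conditioning on $\mathcal{F}_{j-1}$ and applying a rotation taking $\mathrm{span}(H_1,\ldots,H_{j-1})$ to the span of the first $j-1$ coordinate axes shows that the coordinates $(t_{1,j}, \ldots, t_{j-1,j})$ are i.i.d.\ $N(0,1)$, that $t_{j,j}^2 \sim \chi^2_{n-j+1}$, and that $H_j$ is uniform on the unit sphere of $\mathrm{span}(H_1, \ldots, H_{j-1})^\perp$, all mutually independent and jointly independent of $\mathcal{F}_{j-1}$. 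Iterating over $j \in [p]$ both yields the asserted product distribution for $T$ and shows that the law of $T$ is unchanged after conditioning on the entire sequence $(H_1, \ldots, H_p) = H$, which is precisely the independence of $T$ and $H$.

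For the marginal distribution of $H$, I would invoke the rotational invariance of $X$: for any $U \in \mathbb{O}^{n\times n}$, the matrix $UX$ has the same distribution as $X$, and $(UH) T$ is a valid QR factorization of $UX$ because $UH \in \mathbb{O}^{n\times p}$ and $T$ is unchanged. Uniqueness therefore forces $(UH, T) \stackrel{\mathrm{d}}{=} (H, T)$, so in particular $UH \stackrel{\mathrm{d}}{=} H$ for every $U \in \mathbb{O}^{n\times n}$; this left $\mathbb{O}^{n\times n}$-invariance characterizes the invariant (uniform) measure on the Stiefel manifold $\mathbb{O}^{n\times p}$.

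The main obstacle is the conditional-distribution identification at each inductive stage: one must check carefully that, given $\mathcal{F}_{j-1}$, the joint conditional law of $(t_{1,j}, \ldots, t_{j-1,j}, t_{j,j}, H_j)$ depends on $\mathcal{F}_{j-1}$ only through the subspace $\mathrm{span}(H_1, \ldots, H_{j-1})$ and, in the case of the $t$-coordinates, only through its dimension. This follows from orthogonal invariance as above, but the bookkeeping to chain the inductive factorizations into the global independence of $H$ and $T$ requires some care. Should this route become cumbersome, an alternative is to perform the explicit change of variables $X \mapsto (H, T)$, whose Jacobian is the standard product $\prod_{j=1}^p t_{j,j}^{n-j}$, so that the Gaussian density $\exp(-\|X\|_{\mathrm{F}}^2/2)/(2\pi)^{np/2} = \exp(-\|T\|_{\mathrm{F}}^2/2)/(2\pi)^{np/2}$ factorizes into a uniform piece on $\mathbb{O}^{n\times p}$ and the claimed independent entries of $T$, yielding all three conclusions at once.
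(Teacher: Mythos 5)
Your proposal is correct, and it reaches the conclusion by a genuinely different route for the distribution of $T$. The paper's proof is deliberately short: it obtains the law of $T$ by citing the Bartlett decomposition of the Wishart matrix $X^\top X = T^\top T$ (via the correspondence between the QR decomposition of $X$ and the Cholesky factorisation of $X^\top X$), and then gets both the uniformity of $H$ and its independence from $T$ in one stroke from the invariance $QX \stackrel{\mathrm{d}}{=} X$ together with the transitivity of the left $\mathbb{O}^{n\times n}$-action on $\mathbb{O}^{n\times p}$, which forces the conditional density of $H$ given $T$ to be constant. You instead derive the law of $T$ from first principles by an inductive Gram--Schmidt argument (or, in your fallback, the explicit Jacobian $\prod_{j=1}^p t_{j,j}^{n-j}$), which is more self-contained but requires the conditional bookkeeping you flag; your final invariance argument for the marginal of $H$ coincides with the paper's. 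One phrasing slip: $H_j$ is supported on the unit sphere of $\mathrm{span}(H_1,\ldots,H_{j-1})^\perp$, a random subspace, so it cannot be literally ``jointly independent of $\mathcal{F}_{j-1}$''; the correct statement, which you give in your last paragraph, is that its conditional law depends on $\mathcal{F}_{j-1}$ only through that subspace, and this is what makes the chaining work. Finally, note that your degrees of freedom $t_{j,j}^2\sim\chi^2_{n-j+1}$ are the correct ones: the $\chi^2_{p-j+1}$ in the lemma statement is a typo, as confirmed by the way the lemma is invoked in the proof of Proposition~\ref{Prop:WtW} (where $t_1^2 = s_1^2 + r_1^2$ with $s_1^2\sim\chi^2_p$ and $r_1^2\sim\chi^2_{n-p}$, i.e.\ $t_1^2\sim\chi^2_n$).
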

\begin{proof}
The uniqueness of the QR decomposition follows since $X$ has rank $p$ almost surely. The marginal distribution of $T$ then follows from the Bartlett decomposition of $X^\top X$ \citep[Theorem~3.2.14]{Muirhead1982} and the relationship between the QR decomposition of $X$ and the Cholesky decomposition of $X^\top X$.

For any fixed $Q\in\mathbb{O}^{n\times n}$, we have $QX \stackrel{\mathrm{d}}= X$. Since $\mathbb{O}^{n\times n}$ acts transitively (by left multiplication) on $\mathbb{O}^{n\times p}$, the joint density of $H$ and $T$ must be constant in $H$ for each value of $T$. In particular, we have that $H$ and $T$ are independent, and that $H$ is uniformly distributed on $\mathbb{O}^{n\times p}$ with respect to the translation-invariant measure.
\end{proof}

The following two lemmas control the moment generation functions of (decoupled) quadratic Rademacher chaos random variables with respect to different matrices.
\begin{lemma}
\label{Lemma:Chaos}
Let $\xi = (\xi_1,\ldots,\xi_d)^\top$ and $\xi'=(\xi'_1,\ldots,\xi'_d)^\top$ be independent with independent Rademacher entries and fix $A\in\mathbb{R}^{d\times d}$.  There exists a universal constant $C>0$ such that for any $0 < \|A\|_{\mathrm{op}} \leq 1/32$, we have
\[
\mathbb{E} (e^{\xi^\top A \xi'})\leq 1 + C\|A\|_{\mathrm{F}}e^{4\|A\|_{\mathrm{F}}^2}.
\]
\end{lemma}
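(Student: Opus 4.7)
The plan is to first reduce the decoupled chaos to a quadratic form in $\xi'$ alone via conditioning on $\xi'$, then control the moment generating function of that quadratic form by a Hanson--Wright-type bound, and finally convert the resulting exponential estimate into the stated linear-in-$\|A\|_{\mathrm{F}}$ form.

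First, I would condition on $\xi'$ and use independence of the coordinates of $\xi$, together with $\mathbb{E} e^{t\xi_i} = \cosh(t)$, to obtain $\mathbb{E}_\xi e^{\xi^\top A \xi'} = \prod_i \cosh\bigl((A\xi')_i\bigr)$. Applying the elementary bound $\cosh(x) \le e^{x^2/2}$ and then integrating over $\xi'$ gives
\[
\mathbb{E} e^{\xi^\top A \xi'} \le \mathbb{E}_{\xi'} \exp\bigl(\tfrac{1}{2}\,\xi'^\top (A^\top A)\xi'\bigr).
\]
Setting $M := A^\top A/2$, we have $\mathrm{tr}(M) = \|A\|_{\mathrm{F}}^2/2$, $\|M\|_{\mathrm{op}} \le \|A\|_{\mathrm{op}}^2/2 \le 1/2048$, and $\|M\|_{\mathrm{F}}^2 \le \|A\|_{\mathrm{op}}^2 \|A\|_{\mathrm{F}}^2/4 \le \|A\|_{\mathrm{F}}^2/4096$.

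Next, I would use $(\xi'_i)^2 = 1$ to split $\xi'^\top M \xi' = \mathrm{tr}(M) + \xi'^\top \bar M \xi'$ with $\bar M := M - \mathrm{diag}(M)$ having vanishing diagonal, so $\xi'^\top \bar M \xi'$ is a centred order-$2$ Rademacher chaos. A Hanson--Wright moment generating function bound then yields $\mathbb{E} e^{\xi'^\top \bar M \xi'} \le e^{c_2 \|\bar M\|_{\mathrm{F}}^2}$ for some universal $c_2$, provided $\|\bar M\|_{\mathrm{op}} \le 2\|M\|_{\mathrm{op}}$ is below the Hanson--Wright threshold; the hypothesis $\|A\|_{\mathrm{op}} \le 1/32$ (which may be shrunk without loss to meet any fixed threshold) ensures this. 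Combining the two contributions gives
\[
\mathbb{E} e^{\xi^\top A \xi'} \le \exp\bigl(\|A\|_{\mathrm{F}}^2/2 + c_2 \|A\|_{\mathrm{F}}^2/4096\bigr) \le e^{\|A\|_{\mathrm{F}}^2}.
\]

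Finally, I would split on the size of $\|A\|_{\mathrm{F}}$. If $\|A\|_{\mathrm{F}} \le 1$, then by the elementary inequality $e^t - 1 \le te^t$ for $t \ge 0$,
\[
\mathbb{E} e^{\xi^\top A \xi'} - 1 \le \|A\|_{\mathrm{F}}^2\, e^{\|A\|_{\mathrm{F}}^2} \le \|A\|_{\mathrm{F}}\, e^{4\|A\|_{\mathrm{F}}^2},
\]
while if $\|A\|_{\mathrm{F}} > 1$ then trivially $\mathbb{E} e^{\xi^\top A \xi'} \le e^{\|A\|_{\mathrm{F}}^2} \le \|A\|_{\mathrm{F}}\, e^{4\|A\|_{\mathrm{F}}^2}$, giving the lemma with $C = 1$. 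The main obstacle will be making the Hanson--Wright step precise enough that its constant $c_2$ fits comfortably inside the slack between the exponent $\|A\|_{\mathrm{F}}^2$ and the target $4\|A\|_{\mathrm{F}}^2$; as a backup, one can bypass Hanson--Wright entirely by iterating the pointwise bound $\cosh(x) \le e^{x^2/2}$, peeling off the coordinates of $\xi'$ one at a time and using $\|A\|_{\mathrm{op}} \le 1/32$ to force geometric contraction of the accumulated quadratic form.
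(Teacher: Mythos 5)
Your argument is correct in outline but takes a genuinely different route from the paper. The paper works with tail probabilities: it applies Hoeffding's inequality conditionally on $\xi'$ to get a sub-Gaussian tail in terms of $\|A\xi'\|_2$, controls $\|A\xi'\|_2$ around $\|A\|_{\mathrm{F}}$ via convex Lipschitz concentration (with Lipschitz constant $\|A\|_{\mathrm{op}}$), combines these into a mixed sub-Gaussian/sub-exponential tail for $\xi^\top A\xi'$, and then integrates the tail to bound the moment generating function; the hypothesis $\|A\|_{\mathrm{op}}\leq 1/32$ is used there to make the sub-exponential part of the integral converge at $\lambda=1$. You instead bound the moment generating function directly: conditioning on $\xi'$ and using $\cosh(x)\leq e^{x^2/2}$ reduces the problem to $\mathbb{E}\exp\bigl(\tfrac12\xi'^{\top}A^\top A\,\xi'\bigr)$, a quadratic form in a single Rademacher vector with the positive semidefinite matrix $M=A^\top A/2$ whose trace, Frobenius and operator norms are all controlled by those of $A$. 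This is more elementary (no tail integration), and it actually yields the stronger conclusion $\mathbb{E}(e^{\xi^\top A\xi'})\leq e^{\|A\|_{\mathrm{F}}^2}$, from which the stated form follows by your case split on $\|A\|_{\mathrm{F}}\lessgtr 1$; that final conversion is correct.

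The one soft spot is the Hanson--Wright step: the moment generating function bound for the centred chaos $\xi'^{\top}\bar M\xi'$ holds only for $\|\bar M\|_{\mathrm{op}}$ below an unspecified universal threshold, and ``shrinking $1/32$ without loss'' proves a weaker lemma than the one stated. This is easily repaired without Hanson--Wright: since $M\succeq 0$, write $\tfrac12\|A\xi'\|_2^2$ and linearise via $e^{\|v\|_2^2/2}=\mathbb{E}_g e^{\langle g,v\rangle}$ for $g\sim N_d(0,I_d)$, so that
\[
\mathbb{E}_{\xi'}e^{\frac12\|A\xi'\|_2^2}=\mathbb{E}_g\prod_{i=1}^d\cosh\bigl((A^\top g)_i\bigr)\leq \mathbb{E}_g e^{\frac12 g^\top AA^\top g}=\prod_{j=1}^d\bigl(1-\lambda_j(AA^\top)\bigr)^{-1/2}\leq e^{\|A\|_{\mathrm{F}}^2},
\]
where the last step uses $-\log(1-x)\leq x/(1-x)$ and $\lambda_1(AA^\top)=\|A\|_{\mathrm{op}}^2\leq 1/1024$. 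This keeps every constant explicit and covers the full range $\|A\|_{\mathrm{op}}\leq 1/32$.
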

\begin{proof}
By Hoeffding's inequality, we have
\begin{equation}
\label{Eq:CondHoeff}
\mathbb{P}(\xi^\top A \xi' \geq t\mid \xi') \leq \exp\biggl\{-\frac{t^2}{2\|A\xi'\|_2^2}\biggr\}.
\end{equation}
By Jensen's inequality, we have $\mathbb{E}(\|A\xi'\|_2)\leq \{\mathbb{E}(\xi'^\top A^\top A\xi')\}^{1/2} \leq \|A\|_{\mathrm{F}}$. Moreover, the map $x\mapsto \|Ax\|$ is Lipschitz with constant $\|A\|_{\mathrm{op}}$.  Hence, from \citet[Theorem~6.10]{BLM2013}, we have
\begin{equation}
\label{Eq:McDiarmid}
\mathbb{P}(\|A\xi'\|_2 \geq \|A\|_{\mathrm{F}}+u) \leq \exp\biggl\{-\frac{u^2}{8\|A\|_{\mathrm{op}}^2}\biggr\}.
\end{equation}
Combining~\eqref{Eq:CondHoeff} and~\eqref{Eq:McDiarmid}, and setting $u=(2t\|A\|_{\mathrm{op}})^{1/2}$, we have
\begin{align*}
\mathbb{P}(\xi^\top A \xi' \geq t) &\leq \mathbb{P}(\|A\xi'\|_2 \geq \|A\|_{\mathrm{F}}+u) \\
&\qquad  + \mathbb{E}[\mathbb{P}(\xi^\top A \xi' \geq t\mid \xi')\mathbbm{1}_{\{\|A\xi'\|_2\leq \|A\|_{\mathrm{F}}+u\}}]\\
&\leq \exp\biggl\{-\frac{u^2}{8\|A\|_{\mathrm{op}}^2}\biggr\}+\exp\biggl\{-\frac{t^2}{2(\|A\|_{\mathrm{F}}+u)^2}\biggr\} \\
&\leq 2\exp\biggl\{-\frac{t^2}{4(\|A\|_{\mathrm{F}}+t^{1/2}\|A\|_{\mathrm{op}}^{1/2})^2}\biggr\}\\
&\leq 2 \max\bigl\{e^{-t^2/(16\|A\|_{\mathrm{F}}^2)},\, e^{-t/(16\|A\|_{\mathrm{op}})}\bigr\}.
\end{align*}
Consequently, if $32\lambda\|A\|_{\mathrm{op}}\leq 1$, we have
\begin{align*}
    \mathbb{E}(e^{\xi^\top A \xi'}) &= \int_{u=0}^1 \mathbb{P}(e^{\xi^\top A\xi'}\geq u)\,du  + \int_{t=0}^\infty \mathbb{P}(\xi^\top A \xi' \geq t) e^{t} \,dt\\
&\leq 1 + 2\int_{t=0}^{\|A\|_{\mathrm{F}}^2/\|A\|_{\mathrm{op}}} e^{-t^2/(16\|A\|_{\mathrm{F}}^2)+ t}\,dt \\
& \qquad\quad + 2\int_{\|A\|_{\mathrm{F}}^2/\|A\|_{\mathrm{op}}}^\infty  e^{-t/(16\|A\|_{\mathrm{op}})+ t}\,dt\\
& \leq 1 + 8\sqrt{2\pi} \|A\|_{\mathrm{F}}e^{4\| A\|_\mathrm{F}^2} + 64 \|A\|_\mathrm{op}.
\end{align*}
Our claim follows since $\|A\|_{\mathrm{op}}\leq \|A\|_{\mathrm{F}}$.
\end{proof}

\begin{lemma}
\label{Lemma:Chaos2}
Fix $A\in\mathrm{RowSp}(D)\subseteq \mathbb{R}^{p\times p}$ and let $J$ and $J'$ be independent and drawn uniformly at random from all subset of cardinality $k$ of $[p]$. Let $\xi = (\xi_1,\ldots,\xi_d)^\top$ and $\xi'=(\xi_1',\ldots,\xi_d')^\top$ be independent (and independent of $J$ and $J'$) with independent Rademacher entries. Then
\[
\mathbb{E}(e^{\xi^\top A_{J,J'}\xi'}) \leq \biggl\{1 + \frac{Dk}{p}\bigl(\cosh(D\|A\|_{\max}) - 1\bigr)\biggr\}^k.
\]
\end{lemma}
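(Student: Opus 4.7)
The plan is to integrate the four sources of randomness in the order $\xi'$, $J'$, and then $(\xi,J)$, reducing the bulk of the computation to an application of Maclaurin's inequality on elementary symmetric polynomials.

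First, condition on $(\xi,J,J')$ and integrate out $\xi'$. Identifying the coordinates of $\xi$ with the elements of $J$ and setting $v_j := \sum_{i \in J} A_{ij}\xi_i$ for each $j \in [p]$, the independence of the Rademacher coordinates of $\xi'$ yields
\[
\mathbb{E}_{\xi'}\bigl[e^{\xi^\top A_{J,J'}\xi'} \bigm| \xi, J, J'\bigr] = \prod_{j \in J'} \cosh(v_j).
\]
Writing $c_j := \cosh(v_j) \ge 1$ and using that $J'$ is uniformly distributed over $\binom{[p]}{k}$, the next step gives
\[
\mathbb{E}_{J'}\biggl[\prod_{j \in J'} c_j \biggm| \xi, J\biggr] = \binom{p}{k}^{-1} e_k(c_1,\dots,c_p) \le \biggl(\frac{1}{p}\sum_{j=1}^p c_j\biggr)^{\!k} = \biggl(1 + \frac{1}{p}\sum_{j=1}^p (c_j - 1)\biggr)^{\!k},
\]
where $e_k$ is the elementary symmetric polynomial of degree $k$ and the inequality is Maclaurin's inequality applied to the positive numbers $c_1, \dots, c_p$.

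The remaining task is to bound $\sum_{j=1}^p(c_j - 1)$ uniformly in $\xi$ and $J$. Let $N_J := \{j \in [p] : v_j \ne 0\}$. Row sparsity of $A$ ensures that each $i \in J$ contributes at most $D$ nonzero column indices, so $|N_J| \le Dk$. Combined with the column sparsity of $A$ (which, in the applications of interest to Theorem~\ref{Thm:LowerBound} and Theorem~\ref{Thm:LowerBoundDense}, follows from the symmetry of $A = \Sigma_0$), this gives $|v_j| \le D\|A\|_{\max}$ for every $j$, and hence $c_j \le \cosh(D\|A\|_{\max})$ for $j \in N_J$, while $c_j = 1$ for $j \notin N_J$. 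Plugging in,
\[
\sum_{j=1}^p (c_j-1) \le Dk\bigl(\cosh(D\|A\|_{\max}) - 1\bigr),
\]
which is deterministic and therefore survives the outer expectation over $(\xi,J)$, yielding the claimed bound. The main obstacle is spotting the Maclaurin inequality step, which is precisely what replaces the cumbersome average over the random subset $J'$ by a much simpler per-column average of $\cosh(v_j)$; the rest is a routine combinatorial estimate using the sparsity of $A$.
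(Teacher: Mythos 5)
Your proof is correct and reaches the paper's bound by a genuinely different route for the key combinatorial step. The paper integrates out $\xi$ first (conditionally on $J$, $J'$ and $\xi'$), bounds every active hyperbolic-cosine factor by the worst case $\cosh(D\|A\|_{\max})$, and then controls the \emph{number} of active factors, $|J\cap\bigcup_{j'\in J'}\mathrm{nb}(j')|$, by stochastic domination: this count is dominated by a $\mathrm{HyperGeom}(k;Dk,p)$ variable, whose moment generating function is in turn bounded by that of $\mathrm{Bin}(k,Dk/p)$ via Hoeffding's 1963 comparison theorem, which yields exactly $\{1+\tfrac{Dk}{p}(\cosh(D\|A\|_{\max})-1)\}^k$. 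You instead integrate out $\xi'$ first and replace the probabilistic comparison by the deterministic Maclaurin inequality $e_k(c)/\binom{p}{k}\le (p^{-1}\sum_{j}c_j)^k$, after which the same worst-case bound on the at most $Dk$ nonzero $v_j$'s finishes the argument. The two devices are essentially interchangeable here --- Hoeffding's theorem applied to the convex functional $\exp(\sum_{j\in J'}\log c_j)$ is precisely a probabilistic proof of the Maclaurin step you use --- but your version is more elementary and self-contained. One shared caveat: your bound $|v_j|\le D\|A\|_{\max}$ requires each \emph{column} of $A$ to have at most $D$ nonzero entries, which does not follow from $A\in\mathrm{RowSp}(D)$ alone; you flag this correctly, and the paper's own proof relies on the same implicit symmetry of the support of $A$ (in identifying $\{j:c_j\ne 0\}$ with a subset of $\bigcup_{j'\in J'}\mathrm{nb}(j')$ of cardinality at most $Dk$), which is harmless since in all applications $A=\Sigma_0$ is taken to be symmetric.
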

\begin{proof}
Write $a:=\|A\|_{\max}$. Also, for notational simplicity, we define $\theta, \theta' \in\mathbb{R}^p$ such that $\theta_{J}=\xi$, $\theta_{J^\mathrm{c}} = 0$, $\theta'_{J'}=\xi'$, $\theta'_{J'^\mathrm{c}} = 0$. So $\xi^\top A_{J,J'}\xi' = \theta^\top A\theta'$.

For each $j\in [p]$, we write $\mathrm{nb}(j) := \{j'\in[p]: A_{j,j'}\neq 0\}$. Note that by the definition of $\mathrm{RowSp}(D)$, $|\mathrm{nb}(j)|\leq D$ for all $j\in[p]$. Hence,
\[
\theta^\top A \theta'= \sum_{j\in J}\sum_{j'\in \mathrm{nb}(j)\cap J'} A_{j,j'}\theta_j \theta'_{j'} = \sum_{j\in J} c_j \theta_j
\]
where $c_j := \sum_{j'\in\mathrm{nb}(j)\cap J'} A_{j,j'}\theta'_{j'}$. We note that $|c_j|\leq Da$ and $c_j = 0$ unless $j\in\cup_{j'\in J'}\mathrm{nb}(j')$. Observe that $|\cup_{j'\in J'}\mathrm{nb}(j')| \leq Dk$, so $|\cup_{j'\in J'}\mathrm{nb}(j')\cap J|$ is stochastically dominated by the hypergeometric random variable $\mathrm{HyperGeom}(k; Dk, p)$ (defined as the number of black balls obtained from $k$ draws without replacement from an urn containing $p$ balls, $Dk$ of which are black). Let $B\sim \mathrm{Bin}(k, Dk/p)$, we have by \citet[Theorem~4]{Hoeffding1963} that
\begin{align*}
    \mathbb{E}(e^{\theta^\top A\theta'}) &= \mathbb{E}\bigl\{\prod_{j\in[p]} \mathbb{E}(e^{c_j\theta_j}\mid J', \theta')\bigr\} = \mathbb{E}\bigl\{\prod_{j\in \cup_{j'\in J'}\mathrm{nb}(j')\cap J} \cosh(c_j)\bigr\} \\
    &\leq  \mathbb{E} (e^{B\log\cosh(Da)}) = \biggl\{1 + \frac{Dk}{p}\bigl(\cosh(Da) - 1\bigr)\biggr\}^k.
\end{align*}
The proof is complete.
\end{proof}


\bibliographystyle{custom}
\bibliography{compsket.bib}
\end{document}